\documentclass[11pt]{article}
\usepackage[T1]{fontenc}
\usepackage{lmodern}
\usepackage{microtype}
\usepackage[reqno]{amsmath}
\usepackage{amssymb,amsthm,mathtools,mathrsfs}
\mathtoolsset{showonlyrefs}
\usepackage[margin=1in]{geometry}
\usepackage[dvipsnames]{xcolor}
\usepackage[colorlinks=true,linkcolor=blue,citecolor=blue,urlcolor=blue]{hyperref}
\usepackage{bookmark}
\usepackage{enumitem,needspace}
\setlist[enumerate]{label=(\roman*),leftmargin=2em}
\newtheorem{theorem}{Theorem}[section]
\newtheorem{lemma}[theorem]{Lemma}
\newtheorem{proposition}[theorem]{Proposition}
\newtheorem{corollary}[theorem]{Corollary}
\theoremstyle{remark}
\newtheorem{remark}[theorem]{Remark}
\newcommand{\Z}{\mathbb Z}
\newcommand{\R}{\mathbb R}
\newcommand{\C}{\mathbb C}
\newcommand{\T}{\mathbb T}
\newcommand{\N}{\mathbb N}
\newcommand{\E}{\mathbb E}
\newcommand{\PP}{\mathbb P}
\newcommand{\ind}{\mathbf 1}
\newcommand{\cS}{\mathcal S}
\newcommand{\cH}{\mathcal H}
\newcommand{\cO}{\mathcal O}
\newcommand{\Bnorm}[1]{\left\|#1\right\|_{B}}
\newcommand{\knorm}[1]{\left\|#1\right\|_K}
\DeclareMathOperator{\PV}{PV}
\DeclareMathOperator{\Law}{Law}
\DeclareMathOperator{\Res}{Res}
\DeclareMathOperator{\lcm}{lcm}
\DeclareMathOperator{\Var}{Var}
\newcommand{\cP}{\mathcal P}
\newcommand{\cF}{\mathcal F}
\newcommand{\cR}{\mathcal R}
\newcommand{\Q}{\mathbb Q}
\newcommand{\cD}{\mathscr D}
\numberwithin{equation}{section}
\allowdisplaybreaks[1]
\title{Spacing statistics for a point scatterer on the cubic three-torus}
\author{Christopher Lutsko}
\date{}
\hypersetup{
  pdftitle={Spacing statistics for a point scatterer on the cubic three-torus},
  pdfauthor={Christopher Lutsko},
  pdfkeywords={point scatterers, Seba billiards, spectral statistics, level repulsion,
               sums of three squares, MSC 2020: 81Q50, 11E25, 35P20}
}
\begin{document}
\makeatletter
\def\Hy@writebookmark#1#2#3#4#5{}
\makeatother

\maketitle
\vspace{4mm}

\begin{abstract}
We study the new eigenvalues of a fixed point interaction on the cubic
three-torus. Their mean-one consecutive spacings converge to a probability
law independent of the interaction parameter. We characterize this law
by the consecutive zeros of a random meromorphic function built from
three-squares congruence densities. Its small-gap distribution has the form
$s^5\Psi(\log_2(s^{-2}))+o(s^5)$, where $\Psi$ is continuous, positive,
and one-periodic. The proof combines periodic mean-square approximation,
a weighted Hilbert-transform estimate, and Fourier estimates that retain
the interactions between primes.
\end{abstract}

\begingroup
\begin{NoHyper}
\renewcommand{\thefootnote}{}
\footnotetext{2020 Mathematics Subject Classification: Primary 81Q50;
Secondary 11E25, 35P20. Keywords: point scatterers, \v{S}eba billiards,
spectral statistics, level repulsion, sums of three squares.}
\end{NoHyper}
\endgroup

\section{Introduction}

A central question in quantum chaos is: how does the spectral information
of a quantum system reflect the geometric and dynamical properties of the
corresponding classical system? For instance, the Berry--Tabor
\cite{BT77} and Bohigas--Giannoni--Schmit \cite{BGS} conjectures predict
that generic integrable systems have Poisson local statistics of their
quantum energy levels, whereas generic chaotic systems have random
matrix statistics, with the ensemble determined by the symmetries.
These conjectures motivated the study of explicit models where something
can be said about the quantum energy levels. One notable example is the
\v{S}eba billiard \cite{S90}, and its toral analogue, a point scatterer
on a flat torus. The spectrum retains the Laplace \emph{old eigenvalues}, with each multiplicity reduced by one, and
contains a sequence of \emph{new eigenvalues}. The local statistics of
these new eigenvalues have been closely studied \cite{RU,Kurlberg}.
In this article, we give a limiting gap distribution for the new spectrum
and prove a sharp asymptotic for the frequency of small gaps.

Let $\T^3:=\R^3/(2\pi\Z)^3$ denote the three-torus. The positive
Laplace eigenvalues are the integers $m>0$ such that $r_3(m)>0$, where
\[
 r_3(m):=\#\{v\in\Z^3:|v|^2=m\}.
\]
The multiplicity at $m$ is $r_3(m)$. Fix a point $o\in\T^3$.
The Hamiltonian with a point scatterer is formally written as
\[
 H_{\alpha,o}:=-\Delta+\alpha\delta_o.
\]
It is defined rigorously through a nontrivial self-adjoint extension of
the Laplacian restricted away from $o$. We use $c\in\R$ for the fixed
extension parameter in the spectral equation \eqref{conv:eq:spectral}.
The formal coupling $\alpha$ will not be used below.

Between every two consecutive distinct old eigenvalues there is
exactly one new eigenvalue. Thus the new eigenvalues \emph{interlace}
the old ones. Write
\[
 \Lambda_1(c)<\Lambda_2(c)<\cdots
\]
for the positive new eigenvalues in increasing order. The new spectrum
has density $\rho:=5/6$, as recalled in Section~\ref{sec:normalization}.
Its mean-one spacings and empirical spacing measure are
\[
 s_j(c):=\frac56\bigl(\Lambda_{j+1}(c)-\Lambda_j(c)\bigr),
 \qquad \mu_N^{(c)}:=\frac1N\sum_{j=1}^N\delta_{s_j(c)}.
\]

\Needspace{7\baselineskip}
\begin{theorem}\label{conv:thm:main}
There is a probability measure $\mu$ on $[0,10/3]$ such that, for every
fixed $c\in\R$,
\begin{equation}\label{conv:eq:limit}
 \mu_N^{(c)}\ \Longrightarrow\ \mu\qquad(N\to\infty).
\end{equation}
The limit is independent of $c$, has mean one, and satisfies
$\mu(\{0\})=0$.
\end{theorem}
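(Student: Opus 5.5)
We treat the spectral equation \eqref{conv:eq:spectral} as the zero set of a meromorphic function of $\lambda$. We write it as $F_\lambda(x)=c$, where $F$ is the renormalized Green's function sum $\sum_m r_3(m)\bigl(\frac{1}{m-\lambda}-\frac{m}{m^2+1}\bigr)$. The new eigenvalues are then the unique zeros of $F(\lambda)-c$ between consecutive old eigenvalues $m<m'$ with $r_3>0$. We zoom in around a random point $\lambda=T+6x/5$, with $T$ uniform in $[0,X]$ and $x$ in a bounded window, and subtract the divergent smooth part. The local function is then governed by the nearby old eigenvalues together with their weights $r_3(m)$. It has the form
\[
 G_T(x)=\sum_{|m-T|\le L} \frac{r_3(m)}{\sqrt{T}}\,\frac{1}{m-T-6x/5}+\text{(tail)}.
\]
After normalizing by $\sqrt T$, the weights $r_3(m)/\sqrt m$ are, by Gauss's formula, essentially $L(1,\chi_{-4m})$ times the local $2$-adic factor. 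They therefore become a stationary random field indexed by the congruence class of $m$ modulo a large modulus $Q$. We first build the limiting random meromorphic function: its poles are at the (shifted) integers $m$ admissible for sums of three squares, and its residues come from the three-squares congruence densities.

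\textbf{Step 1 (periodic approximation).} Replace $r_3(m)/\sqrt m$ by a truncated Euler product $w_Q(m)$ depending only on $m\bmod Q$. We show that the mean-square error $\frac1X\sum_{m\le X}|r_3(m)/\sqrt m-w_Q(m)|^2$ tends to $0$ as $Q\to\infty$. The expansion of $L(1,\chi)$ and its second moment give this, and here the interactions between different primes must be retained.

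\textbf{Step 2 (tail control).} Show that the contribution of poles with $|m-T|>L$ is small in measure, uniformly in $X$, as $L\to\infty$. This is a weighted Hilbert-transform estimate: $\sum_{|m-T|>L} a_m/(m-T)$ with $a_m=r_3(m)/\sqrt m$ minus its mean has small $L^2(dT)$ norm. The mean part cancels against the smooth subtraction, which is where the density term and $c$ drop out. This step is the main obstacle: the Hilbert transform is only conditionally convergent, so we must use Fourier estimates on the Weyl-type sums $\sum a_m e(m\theta)$ near rationals, derived from the singular series, to get square-function bounds uniform in $X$.

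\textbf{Step 3 (convergence and identification).} Steps 1 and 2 show that the finite-dimensional laws of the local configuration converge. The configuration consists of the pole positions mod $Q$ and the residues. Because $T$ is sampled uniformly, and because of equidistribution of $\lfloor T\rfloor$ modulo $Q$, this convergence holds for the local function in the topology of locally uniform convergence away from the poles. By interlacing, each zero lies in a single gap between poles, and a monotone function has stable zeros. Hence the point process of zeros converges, and with it the gap distribution; the standard Palm-measure correspondence links sampling at random $T$ with the empirical spacing measure $\mu_N^{(c)}$. The constant $c$ enters only as $c/\sqrt T\to0$, so the limit $\mu$ does not depend on $c$.

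\textbf{Step 4 (properties).} Interlacing gives a bound on each gap: two consecutive new eigenvalues lie within the span of three consecutive old eigenvalues. Integers $4^a(8b+7)$ are never sums of three squares, but among any $4$ consecutive integers at least $3$ are, so consecutive represented integers differ by at most $2$. This bounds every gap by $4$ in $\lambda$, that is, by $10/3$ after scaling by $5/6$, so $\mu$ is supported on $[0,10/3]$. The mean is one because the density of new eigenvalues equals the density $5/6$ of old distinct eigenvalues. Because the gaps are uniformly bounded, the means converge and there is no escape of mass. Finally, $\mu(\{0\})=0$: a zero gap would require a residue to vanish or two poles to merge in the limiting function. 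Since the limiting residues are almost surely positive and the poles are at least distance $1$ apart, zeros of the limit are almost surely simple and separated. Tightness near $0$ then follows from Step 3.
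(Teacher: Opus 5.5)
Your architecture matches the paper's: mean-square periodic approximation of the normalized multiplicities, transfer to the local Green function through a Hilbert-transform estimate, a random meromorphic limit indexed by congruence data, and stability of zeros of increasing functions. Two steps, however, need repair.

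\textbf{The arithmetic in Step 4 is false.} Among $28,29,30,31$ only $29$ and $30$ are sums of three squares, since $28=4\cdot7$ and $31\equiv7\pmod8$. Also $111\equiv7\pmod8$ and $112=4^2\cdot7$ are both omitted, so $110$ and $113$ are consecutive elements of $\cS$ at distance $3$. Your bound $m_+-m_-\leq4$ is still true, but for a different reason: $m-1$ and $m+1$ cannot both be omitted. If they are odd, both would have to be $\equiv7\pmod8$, which is impossible since they differ by $2$. If they are even, one of them is $\equiv2\pmod4$ and hence represented. This parity argument is what puts $\mu$ on $[0,10/3]$.

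\textbf{You assume without proof that the limiting residues are almost surely positive at every admissible integer.} This is a genuine lemma, not a formality. The limiting residue at $k$ is a random Euler product $\beta_2(U_2+k)\prod_{p>2}\beta_p(U_p+k)$ over a Haar-random profinite integer, equivalently a nonnegative martingale limit. Nothing in your Steps 1--3 prevents it from vanishing on a set of positive measure. The paper proves positivity by showing that $\sum_{p>2}\log\beta_p(U_p)$ converges almost surely, since its means and variances are $O(p^{-2})$.

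Without this lemma, the poles of the limit need not be the admissible integers, and your Step 3 breaks. A prelimit pole whose residue tends to zero drags one adjacent zero onto that integer, which is not a zero of the limit function, so the zeros are not stable.

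It also undermines your route to $\mu(\{0\})=0$. Sampling a continuous uniform $T$ only yields the size-biased law $s\,\mu_N^{(c)}(ds)$. Inverting the Palm correspondence requires that no mass escape to $0$, i.e.\ that the limiting zero process be simple, and your argument for simplicity is exactly the positivity of the residues. The paper avoids the inversion altogether: it samples the integer $m$ uniformly in $(X,2X]$ and restricts to $m\in\cS$. Each $m\in\cS$ labels exactly one gap, the one straddling $m$, so this produces $\mu_N^{(c)}$ directly, and $\mu(\{0\})=0$ is immediate from $Z_-<0<Z_+$.

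\textbf{Step 2 is not the real obstacle, and no Weyl-sum estimates near rationals are needed.} The tail coming from $a-P_Q$ is controlled in mean square by Step 1 together with the $\ell^2$-boundedness of the weighted discrete Hilbert transform. The tail of the mean-zero periodic part $P_Q-1$ is $O_Q(L^{-1})$ by summation by parts. What does need care is elsewhere. Step 1 requires an asymptotic for $\sum_{n\le x}r_3(n)^2$ whose constant exactly equals the total Fourier energy of the local densities; an upper bound does not suffice. One must also check that no additive constant survives the regularization, which reduces to $\PV\int_0^\infty t^{-1/2}(t-1)^{-1}\,dt=0$.
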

The convergence in \eqref{conv:eq:limit} requires no averaging over the
interaction parameter. In particular, at every continuity point $s$ of
$F(s):=\mu([0,s])$,
\[
 \frac1N\#\{j\leq N:s_j(c)\leq s\}\longrightarrow F(s).
\]
An explicit description of $\mu$ is given in \eqref{conv:eq:law}.
The bounded support comes from interlacing.

Our second result concerns the mass that this law assigns to small gaps.
\Needspace{6\baselineskip}
\begin{theorem}\label{thm:spacing}
There is a continuous, strictly positive, one-periodic function $\Psi$
such that
\begin{equation}\label{eq:unfoldedasymptotic}
 F(s)=s^5\Psi\bigl(\log_2(s^{-2})\bigr)+o(s^5)
 \qquad(s\downarrow0).
\end{equation}
In particular, $F(s)\asymp s^5$.
\end{theorem}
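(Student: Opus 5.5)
The plan is to work entirely with the explicit description \eqref{conv:eq:law} of $\mu$. There, $F(s)$ is the Palm-type expected number, per unit length, of pairs of consecutive zeros of the random meromorphic function at distance $\le s$. The zeros interlace the poles, and the poles are the old eigenvalues $m$. So every gap $(\Lambda_j,\Lambda_{j+1})$ straddles exactly one pole, and I would index gaps by that pole.

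\textbf{Local model at a pole.} Near a pole at position $x_m$ with weight $w_m>0$ (proportional to $r_3(m)/\sqrt m$ in the unfolded scale), write the random function as
\[
 f(t)=\frac{w_m}{x_m-t}+g_m(t),
 \qquad g_m(t)=a_m+b_m(t-x_m)+O\bigl((t-x_m)^2\bigr),
\]
with $g_m$ regular near $x_m$. Setting $u=t-x_m$, the two zeros adjacent to $x_m$ solve approximately $b_mu^2+a_mu-w_m=0$. Their separation is
\[
 \frac{\sqrt{a_m^2+4b_mw_m}}{b_m}.
\]
Hence a gap $\le s$ forces both $|a_m|\lesssim s$ and $w_m\lesssim s^2$, with $b_m$ of order one.

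\textbf{Source of small weights and of the exponent.} Small weights come from the 2-adic structure of sums of three squares. Since $r_3(4m)=r_3(m)$ while $\sqrt{4m}=2\sqrt m$, write $m=4^km'$ with $4\nmid m'$. Then $w_m=2^{-k}\tilde w_{m'}$, where $\tilde w$ has a law bounded away from $0$ coming from the congruence densities at odd primes and at $2$. The density of poles with $4^k\mid m$ is $\asymp 4^{-k}$.
\begin{itemize}
 \item The constraint $2^{-k}\tilde w\lesssim s^2$ forces $k\ge \log_2(s^{-2})+O(1)$, which costs $4^{-k}\asymp s^4$.
 \item The constraint $|a_m|\lesssim s$ costs a further factor $s$, provided $a_m$ has a bounded, continuous conditional density at $0$.
\end{itemize}
This gives $F(s)\asymp s^5$.

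\textbf{Periodic profile.} To obtain the precise profile, set $k_0=\lfloor \log_2 s^{-2}\rfloor$ and $\theta=\log_2 s^{-2}-k_0$. Summing over $k=k_0+j$, $j\in\Z$, and rescaling $u=s\,v$, $a=s\,\alpha$, I expect
\[
 F(s)=s^5\sum_{j\in\Z}2^{-2j}\,2^{-2\theta}\,
 \E\Bigl[p_{a\mid\cdot}(0)\,\Bigl|\Bigl\{\alpha:\ \sqrt{\alpha^2+4b\,2^{-j+\theta}\tilde w}\le b\Bigr\}\Bigr|\Bigr]+o(s^5),
\]
where the expectation is over $(b,\tilde w)$ in the limiting local law. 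The sum over $j$ converges: terms with $j\to-\infty$ vanish because the admissible $\alpha$-set is empty once $2^{-j}\tilde w\gtrsim b$. The resulting function of $\theta$ is continuous and one-periodic by construction, since shifting $\theta$ by $1$ reindexes $j$. It is strictly positive because the density of $a$ at $0$ is positive and $\tilde w$ takes values in a bounded range with positive probability.

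\textbf{Main obstacle.} The hard step is justifying the local model uniformly. Specifically, I need to show that, conditionally on $4^k\| m$ with $k$ large, the triple $(a_m,b_m,\tilde w_{m'})$ converges in law to a limit that is independent of $k$ and has a jointly continuous density in $a$ near $0$, with $b_m$ bounded below with high probability. The regular part $g_m$ is a conditionally convergent, Hilbert-transform-like sum over the other poles, correlated with $m$ through congruence conditions at every prime. I would handle it as follows.
\begin{itemize}
 \item Use the periodic mean-square approximation from the convergence proof to truncate $g_m$ to finitely many prime moduli, with error small in $L^2$.
 \item Use the weighted Hilbert-transform estimate to control the tail, and to bound the probability that $b_m$ is small or that another pole lies within distance $s^{1-\epsilon}$.
 \item Use Fourier estimates on the characteristic function of $a_m$ to obtain a uniformly bounded, equicontinuous density. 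These estimates must retain the cross-prime interactions.
\end{itemize}
The remaining contributions are then $o(s^5)$: configurations with two nearby poles, the quadratic remainder in $g_m$, and small $b_m$. This gives \eqref{eq:unfoldedasymptotic}.
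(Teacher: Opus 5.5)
Your reduction follows the paper's route. You use the quadratic model $q=\sqrt{h^2+4DR}/D$ and the factorization $R=2^{-\ell}bW$ with $\PP(\ell=n)=\tfrac34\,4^{-n}$. You ask for a density of the regular value $h$ jointly with the slope $D$ and the odd-prime weight $W$, continuous as the $2$-adic coordinate tends to $0$. Summing over dyadic classes then gives a periodic profile; your factor $2^{-2\theta}$ should be $2^{+2\theta}$, since $4^{-k_0}=s^4 2^{2\theta}$.

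The genuine gap is strict positivity. You assert that ``the density of $a$ at $0$ is positive'', and you do not list this among your obstacles, but nothing in your plan yields it. Fourier decay of the characteristic function gives a bounded continuous density, and a continuous density may vanish at $0$. In the paper's notation, $\Phi>0$ requires $\nu_*=\nu_{0,0}\neq0$, i.e.\ $f_1(0,0)>0$. Without it your argument gives only $F(s)\ll s^5$ and $\Psi\geq0$. So both the strict positivity of $\Psi$ and the lower bound in $F(s)\asymp s^5$ remain unproved.

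The paper devotes Section~\ref{sec:positivity} to this. The missing idea is a way to move the law of $h$ in a controlled direction while keeping uniform smoothing. The paper proceeds as follows.
\begin{enumerate}
\item It tilts Haar measure by $dQ/d\PP=\prod_{p\in I}(1+\vartheta p\,\delta_p(V_p+k_*))$ over primes $p\in(P,P^A]$.
\item It proves the bound $|\E_Q e^{i\xi h}|\ll(1+|\xi|)^{-3/2}$ uniformly over all such tilts, independently of $|I|$ and of $\sup dQ/d\PP$.
\item It shows that, as $P\to\infty$, the $Q$-law of $h$ converges to the Haar law of $h+(A^\vartheta-1)a_*$, where $a_*=\beta_2(k_*)W(V+k_*)/k_*$.
\item Averaging over $\vartheta$ yields the law of $h+\varepsilon Ua$, whose density is visibly positive at any prescribed $y$.
\item The common Fourier majorant upgrades weak convergence to uniform convergence of densities.
\item At one fixed finite $P$, the bounded Radon--Nikodym derivative transfers positivity back to $f$.
\end{enumerate}

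Two smaller corrections. First, $\tilde w$ is not bounded away from $0$; it only has negative moments of all orders. So the sum over $j\to-\infty$ needs quantitative tail control of $D/W$, which the paper obtains from the moment bound \eqref{eq:tracemoments} for the weighted densities. Second, in the limiting model the poles are integers, so ``another pole within distance $s^{1-\epsilon}$'' never occurs. Your conditioning on $4^k\| m$ should be read as $t\in4^\ell\Z_2$, and the required $k$-independence is exactly the weak continuity of $\nu_{t,y}$ at $(t,y)=(0,0)$.
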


This gives a strong form of level repulsion: the limiting law assigns
mass of order $s^5$ to $[0,s]$. For comparison, the cumulative
Poisson spacing law is of order $s$, and the GOE law is of order $s^2$
near zero. The periodic coefficient reflects the powers of four in the
three-squares problem. 

\subsection{Relation to earlier work}
The model originates with \v{S}eba \cite{S90}, who studied a rectangular
billiard with a point interaction and observed spectral and eigenfunction
behavior associated with quantum chaos, despite the essentially
integrable classical motion. Shigehara \cite{SH94} clarified the role
of the coupling: in two dimensions, the regime displaying repulsion
requires the extension parameter to vary with energy. A fixed extension
has a different high-energy limit. The toral model replaces the billiard
boundary by periodic boundary conditions.

Rudnick--Uebersch\"ar \cite{RU} made this distinction precise using a
trace formula. In two dimensions and at fixed coupling, the new levels
approach the distinct old levels on the mean-spacing scale. Consequently,
their limiting spacing distributions agree whenever either exists.
In three dimensions, they proved instead that the mean old-to-new
displacement is asymptotic to half the mean old spacing. This provides
information about the average displacement without determining the
spacing distribution.

For the cubic three-torus, Kurlberg \cite{Kurlberg} proved that the
upper asymptotic proportion of gaps at most $s$ is
$O_\varepsilon(s^{4-\varepsilon})$ for every $\varepsilon>0$.
This bound does not assume existence of a limiting spacing law.
His argument relates small gaps to unusually small normalized
multiplicities and identifies divisibility by large powers of four as
the principal arithmetic mechanism. 
Kurlberg also discusses a sixth-order heuristic; the fifth-order law
here comes from analyzing the regular Green value together with the
central multiplicity and the slope.

Alongside this work on the distribution of the eigenvalues, the distribution
and shape of the new eigenfunctions have also been studied.  Kurlberg--Uebersch\"ar
proved quantum ergodicity on arithmetic tori and, in contrast,
established superscars on Diophantine irrational tori \cite{KUQE,KUS}.

Our proof uses comparison with an arithmetic probability model which is in the
spirit of Kurlberg--Lester \cite{KL}, who proved Poisson angular spacings
for lattice points on circles along a density-one sequence of admissible
radii. Their observable and limiting law are different. In the present
problem, the congruence model preserves the dependence between neighboring
multiplicities.

\subsection{Proof outline}
There are two separate parts. For convergence, put
\[
 a(n):=\frac{r_3(n)}{2\pi\sqrt n},\qquad
 P_M(u):=M^{-2}\#\{x\bmod M:|x|^2\equiv u\pmod M\}.
\]
The second-moment formula of Choi--Kumchev--Osburn \cite{CKO}
gives, with $M_Q:=\lcm(1,\ldots,Q)$,
\[
 \lim_{X\to\infty}\frac1X\sum_{n\leq X}|a(n)-P_{M_Q}(n)|^2
 \ll Q^{-1}.
\]
The new eigenvalues solve $G(\lambda)=c$, where
\[
 G(\lambda):=\sum_{n\geq0}r_3(n)
       \left(\frac1{n-\lambda}-\frac n{n^2+1}\right)
\]
is the regularized scalar Green function. A weighted discrete Hilbert
transform transfers the approximation of $a$ to this spectral equation:
\[
 \frac{G(m+z)-c}{2\pi\sqrt m}
 \approx-\frac{\pi}{M_Q}\sum_{b\bmod M_Q}P_{M_Q}(m+b)
                    \cot\frac{\pi(z-b)}{M_Q}.
\]
We take the limit on the profinite integers. Its surviving poles agree
almost surely with the three-squares criterion; consequently its two
zeros adjacent to the central pole determine the limiting spacing law.

We next explain the small-gap argument.  Fix a realization $H$ of the
limiting Green function for which the origin is a pole.  Let $-u<0<v$ be
the two zeros adjacent to this pole, and put $d:=u+v$.  Thus $d$ is the
gap whose distribution we study.  Near the origin we write
\[
 H(z)=-\frac{R}{z}+h+Dz+O(Dz^2),
\]
where $R>0$ is the magnitude of the residue, $h$ is the value at the
origin of the regular part of $H$, and $D>0$ is the derivative of
the regular part.  Solving the corresponding quadratic equation gives
\[
 d=q+O(q^2),\qquad q:=\frac{\sqrt{h^2+4DR}}{D}.
\]
Consequently, a gap of size at most $s$ requires both
\[
 R\ll Ds^2,\qquad |h|\ll Ds.
\]

The residue inequality $R\ll Ds^2$ is controlled by the $2$-adic
coordinate.  Conditional on the presence of the central pole, let
$\ell\geq0$ be the integer for which this coordinate is divisible by
$4^\ell$ but not by $4^{\ell+1}$.  Then
\[
 R=2^{-\ell}R_0,
 \qquad \PP(\ell=n)=\frac34\,4^{-n},
\]
where $R_0$ is a positive random factor whose law does not depend on
$\ell$.  The scale $R\asymp s^2$ therefore has probability of order
$s^4$.  We then prove that the regular value $h$, with the necessary
weights in $D$ and $R_0$, has a continuous density which is positive at
zero.  Hence the restriction $|h|\ll Ds$ contributes one further factor
of $s$.  This gives the fifth-order law; the discrete choices of $\ell$
produce the periodic factor in \eqref{eq:unfoldedasymptotic}.

\subsection*{Notation}\hypertarget{outline.notation}{}

Write $e(x):=e^{2\pi ix}$, and let $\ind_E$ denote the indicator of $E$.
For a prime $p$, $v_p(u)$ is the $p$-adic valuation, with $v_p(0):=\infty$.
The symbols $f\ll g$ and $f=O(g)$ mean that $|f|$ is bounded by a
constant multiple of $g$; subscripts indicate permitted dependence.
We write $f\asymp g$ when both inequalities hold.

For $\cD:=\C\setminus\Z$, $\cO(\cD)$ denotes the holomorphic functions on
$\cD$, with locally uniform convergence. If $K\subset\cD$ is compact,
$\|f\|_K:=\sup_{z\in K}|f(z)|$. Principal values are symmetric:
\[
 \PV\sum_{k\in\Z}b_k:=\lim_{L\to\infty}\sum_{|k|\leq L}b_k.
\]

\section{Preliminaries}\label{sec:preliminaries}
\subsection{The spectral equation}\label{sec:spectral}
Translation of the torus moves the scatterer to the origin without
changing the spectrum, henceforth we take $o=0$. The eigenfunctions of $-\Delta$
are $e^{iv\cdot x}$ (we refer to the spectrum of $-\Delta$ without any singularity as the \emph{free spectrum}), with eigenvalues $|v|^2$. On each free eigenspace,
evaluation at the origin has a kernel of codimension one. This kernel
is the surviving old eigenspace. The new eigenfunctions are multiples
of the Green function
\[
 g_w(x):=\frac1{(2\pi)^3}\sum_{v\in\Z^3}
                  \frac{e^{iv\cdot x}}{|v|^2-w},
\]
with convergence in $L^2$ when $w$ is off the free spectrum.
The regularized value at the scatterer gives the scalar spectral equation;
see \cite{RU,Kurlberg} for the self-adjoint extension construction.

Put
\[
 r(n):=r_3(n),\qquad
 \cS:=\{n\geq0:r(n)>0\},\qquad \rho:=\frac56.
\]
In particular $r(0)=1$. For a fixed finite real parameter $c$, consider
\begin{equation}\label{conv:eq:spectral}
 G(w)=c, \qquad \text{ where }\qquad
 G(w):=\sum_{n\geq0}r(n)
 \left(\frac1{n-w}-\frac{n}{n^2+1}\right) .
\end{equation}
This is the spectral equation for a fixed nontrivial self-adjoint
extension, in the normalization used here; see \cite{Kurlberg}.
For a compact set of $w$ away from the poles and large $n$,
\[
 \left|\frac1{n-w}-\frac n{n^2+1}\right|\ll_Kn^{-2},\qquad
 \sum_{n\asymp Y}r(n)n^{-2}\ll Y^{-1/2}.
\]
The second estimate follows from the elementary lattice bound
$\sum_{n\leq Y}r(n)\ll Y^{3/2}$. The dyadic tail is summable,
so the series converges locally normally. Differentiation gives
\[
 G'(x)=\sum_{n\geq0}\frac{r(n)}{(n-x)^2}>0.
\]
At consecutive poles, $G$ runs from $-\infty$ to $+\infty$, which proves
interlacing and simplicity of the new eigenvalues. For $m\in\cS$, $m>0$,
write $m_-$ and $m_+$ for the previous and next elements of $\cS$.
There is exactly one solution $\lambda_m(c)\in(m_-,m)$.
This pole labeling differs from the ordinal labeling $\Lambda_j(c)$ in
the introduction: if $m_j$ is the $j$th positive represented integer,
then $\lambda_{m_j}(c)=\Lambda_j(c)$.
Set
\[
 s_m(c):=\rho\bigl(\lambda_{m_+}(c)-\lambda_m(c)\bigr),\qquad
 N(X):=\#(\cS\cap(0,X]).
\]
We study only the new eigenvalues, counted once each. In the proof,
$s_m$ is indexed by the pole $m$; thus $s_{m_j}$ is the $j$th gap.

\subsection{Arithmetic normalization}\label{sec:normalization}
The three-squares criterion and the disjointness of its exceptional
progressions give
\begin{align}
 \N\setminus\cS&=\bigsqcup_{a\geq0}\{4^a(8b+7):b\geq0\},\notag\\
 X-N(X)&=\frac X8\sum_{a\geq0}4^{-a}+O(\log(2X))
        =\frac X6+O(\log(2X)).\label{eq:countS}
\end{align}
Here the finitely many progressions meeting $[1,X]$ each contribute
an $O(1)$ error; the remaining geometric tail contributes $O(1)$.
In particular, $N(X)=\rho X+O(\log(2X))$.

The three-squares criterion gives old gaps between $1$ and $3$.
Moreover, $m-1$ and $m+1$ cannot both be omitted, so $m_+-m_-\leq4$.
Interlacing therefore gives
\begin{equation}\label{conv:eq:bounded}
  0<s_m(c)<4\rho=10/3.
\end{equation}
Finally, three squares summing to a multiple of four must all be even.
Dividing their coordinates by two gives
\begin{equation}\label{eq:fouridentity}
 r(4n)=r(n),\qquad
 \frac{r(4n)}{2\pi\sqrt{4n}}=\frac12\frac{r(n)}{2\pi\sqrt n}.
\end{equation}

\subsection{Congruence probability}
The profinite integers form the set of infinite sequences satisfying
the usual congruence conditions: 
\begin{align*}
 \widehat\Z
 &:=\{(u_M)_{M\geq1}:u_M\in\Z/M\Z,\quad
                  u_N\equiv u_M\pmod M\text{ if }M\mid N\}.
\end{align*}
By the Chinese remainder theorem
\[
 \widehat\Z\cong\prod_p\Z_p.
\]
Here $\Z_p$ is the ring of $p$-adic integers.
Normalized Haar measure is characterized by
\begin{equation}\label{eq:Haar}
 \PP(U\equiv b\pmod M)=\frac1M.
\end{equation}
Thus the coordinates $U_p$ are independent, and an $M$-periodic
function $P$ satisfies
\[
 \E P(U)=\frac1M\sum_{b\bmod M}P(b).
\]
The translates $U+k$ use the same coordinates and are dependent.
We use $\E$, $\PP$, and $\Law$ for expectation, probability, and law.

\section{Periodic approximation of the multiplicities}\label{sec:arithmetic}

For $n\geq1$ and $M\geq1$, define
\begin{equation*}
 a(n):=\frac{r(n)}{2\pi\sqrt n},\qquad
 P_M(b):=\frac1{M^2}
 \#\{v\bmod M:|v|^2\equiv b\pmod M\}.
\end{equation*}
Thus $P_M$ is nonnegative, $M$-periodic, and has mean one.
For a sequence $b(n)$ define the mean-square seminorm
(the Besicovitch seminorm) by
\[
 \Bnorm{b}^{\,2}:=
 \limsup_{X\to\infty}\frac1X\sum_{1\leq n\leq X}|b(n)|^2.
\]

\begin{lemma}\label{conv:lem:B2}
Let $C:=4\pi^2/(21\zeta(3))$ and $M_Q:=\lcm(1,\ldots,Q)$.
Then
\begin{align}
 \Bnorm{a-P_M}^{\,2}
 &= C-\frac1M\sum_{b\bmod M}P_M(b)^2,\label{conv:eq:exactB}\\
 \Bnorm{a-P_{M_Q}}^{\,2}&\ll Q^{-1}.\label{conv:eq:Btail}
\end{align}
The limsup in \eqref{conv:eq:exactB} is a limit.
\end{lemma}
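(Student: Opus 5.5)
Expanding the square, we get
\[
 \frac1X\sum_{n\leq X}|a(n)-P_M(n)|^2
 =\frac1X\sum_{n\le X}a(n)^2-\frac2X\sum_{n\le X}a(n)P_M(n)+\frac1X\sum_{n\le X}P_M(n)^2 .
\]
We treat each of the three averages separately and show that each has a limit. The last average is that of an $M$-periodic function, so it converges to $\frac1M\sum_{b\bmod M}P_M(b)^2$. For the first we use the second-moment formula of Choi--Kumchev--Osburn \cite{CKO}, namely $\sum_{n\le X}r_3(n)^2\sim \frac{8\pi^4}{21\zeta(3)}X^2$. Partial summation against $1/(4\pi^2 n)$ then gives $\frac1X\sum_{n\le X}a(n)^2\to \frac{8\pi^4}{21\zeta(3)}\cdot\frac{2}{4\pi^2}=C$. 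This matches the stated constant, so the value of $C$ is consistent.

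For the cross term, split $n$ into residue classes $b\bmod M$. We need, for each $b$,
\[
 \sum_{n\le X,\ n\equiv b\ (M)} r_3(n)\sim \frac{4\pi}{3}\,\frac{X^{3/2}}{M}\,P_M(b).
\]
This holds because lattice points $v\in\Z^3$ with $|v|\le\sqrt X$ and $v$ in a fixed class mod $M$ are equidistributed among the $M^3$ classes (volume count in a dilated ball, error $O(X)$). Summing over the classes $v\bmod M$ with $|v|^2\equiv b$ gives the factor $M\cdot P_M(b)/M^3\cdot M^2/M$, i.e. density $P_M(b)/M$ in $n$. Partial summation with weight $1/(2\pi\sqrt n)$ then gives $\frac1X\sum_{n\le X} a(n)\ind_{n\equiv b}\to P_M(b)/M$. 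Hence the cross average tends to $\frac1M\sum_b P_M(b)^2$. Combining the three limits yields $C-2E_M+E_M=C-E_M$ with $E_M:=\frac1M\sum_b P_M(b)^2$, which proves \eqref{conv:eq:exactB}; the limsup is a genuine limit.

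For \eqref{conv:eq:Btail}, we must show $C-E_{M_Q}\ll Q^{-1}$. Since $P_M(b)$ is multiplicative across the CRT decomposition, $E_M$ factors into local factors $\prod_{p^k\| M}E_{p^k}$. As $k\to\infty$, $E_{p^k}$ increases to a local limit $\sigma_p:=\lim_k E_{p^k}$, the local factor of $C$; indeed $C=\prod_p\sigma_p$, the singular-series expression underlying \cite{CKO}. The quantitative inputs are the following.
\begin{enumerate}
 \item For odd $p$ and $p\nmid b$, $P_p(b)=1-\left(\frac{-b}{p}\right)\!/p$ up to the standard Gauss-sum evaluation, so $E_p=1+O(p^{-2})$ and $\sigma_p=1+O(p^{-2})$. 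The primes $p>Q$ therefore contribute $\prod_{p>Q}\sigma_p=1+O(1/(Q\log Q))$.
 \item For $p\le Q$ the exponent $k=\lfloor\log_p Q\rfloor$ satisfies $p^k>Q/p$. One then needs $\sigma_p-E_{p^k}\ll p^{-k}$, or at least something summable to $O(Q^{-1})$ overall. For odd $p$ the deficit comes from $b$ with $v_p(b)\ge k$; it has mass about $p^{-k}$ weighted by $P^2$ of size at most $p^{O(1)}$ times geometric factors. I expect this to give $\ll p^{-k}\cdot(\text{const})$ with a constant tending to $1$ suitably. For $p=2$, the powers-of-four structure \eqref{eq:fouridentity} gives an explicit geometric tail.
\end{enumerate}

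The main obstacle is making the tail bound uniform over $p\le Q$. Summing $p^{-k}\le p/Q$ naively over $p\le Q$ is too lossy. I would instead use the sharper local bound $\sigma_p-E_{p^k}\ll p^{-2k}\cdot p$, or a similar bound exploiting that only $b$ divisible by $p^k$ are affected; with $p^{2k}>Q^2/p^2$ this gives a sum $\sum_{p\le Q}p^3/Q^2\ll Q^2/\log Q$, which is still too big for large $p$. For $\sqrt Q<p\le Q$ (where $k=1$) I would therefore compute $\sigma_p-E_p$ directly: the correction is $O(p^{-2})$, giving $\sum_{p>\sqrt Q}p^{-2}\ll Q^{-1/2}$. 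This is still short of $Q^{-1}$. I would refine it using that the $O(p^{-2})$ difference is actually $\ll p^{-3}$ at level $p$, since it comes from $b\equiv0$ with mass $1/p$ and $P_p(0)-1\ll 1/p$ squared, giving $\sum_{p>\sqrt Q} p^{-3}\ll Q^{-1}$. The small primes are then handled by $p^{-2k}$-type decay. Checking these local computations carefully is where the work lies.
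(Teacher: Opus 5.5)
Your derivation of \eqref{conv:eq:exactB} is correct and is the paper's argument: Choi--Kumchev--Osburn plus partial summation for $\sum a(n)^2$, lattice counting in residue classes for the cross term, and periodicity for $\sum P_M(n)^2$. The gap is in \eqref{conv:eq:Btail}, and it has two parts.

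\emph{The constant.} You assert $C=\prod_p\sigma_p$ as ``the singular-series expression underlying'' the second-moment formula, but you never compute a single $\sigma_p$. From \eqref{conv:eq:exactB} you only get $E_M=C-\Bnorm{a-P_M}^2\leq C$, hence $\lim_Q E_{M_Q}=\prod_p\sigma_p\leq C$. Equality is exactly the matching of the two second-moment constants that the remark after the lemma calls essential. Without it you cannot even conclude $\Bnorm{a-P_{M_Q}}\to0$, let alone obtain a rate.

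\emph{The local bounds.} None of the local bounds your plan needs is actually proved. You move from $p^{-k}$ to $p^{1-2k}$ to ``$p^{-2k}$-type decay''. The $p^{-3}$ bound for $k=1$ is justified by ``$P_p(0)-1\ll1/p$'', but in fact $P_p(0)=1$ exactly. (Also, for $p\nmid b$ the sign is $P_p(b)=1+\left(\frac{-b}{p}\right)/p$.) The sum over $p\leq\sqrt Q$ is never carried out.

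The missing tool is Parseval applied to the Gauss-sum expansion of $P_M$. Orthogonality gives $P_M(n)=\sum_{q\mid M}\sum_{h\bmod q}^{*}c_{q,h}e(-hn/q)$, with $c_{q,h}=q^{-3}S(q,h)^3$ and $S(q,h)=\sum_{x\bmod q}e(hx^2/q)$. Hence $E_M=\sum_{q\mid M}\sum_h^{*}|c_{q,h}|^2$ is a sum of nonnegative, exactly computable energies. Since $|S(q,h)|^2$ equals $q$, $0$, or $2q$ according as $q$ is odd, $q\equiv2\pmod 4$, or $4\mid q$, level $q$ contributes $\varphi(q)/q^3$, $0$, or $8\varphi(q)/q^3$. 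Summing over all $q$ gives $\frac43\cdot\frac67\cdot\zeta(2)/\zeta(3)=C$, which is the identification you assumed. Because every $q\leq Q$ divides $M_Q$, one then gets $C-E_{M_Q}=\sum_{q\nmid M_Q}\sum_h^{*}|c_{q,h}|^2\leq8\sum_{q>Q}q^{-2}\ll Q^{-1}$ directly, with no prime-by-prime uniformity to manage.

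In your local language this says $E_{p^k}=1+\sum_{1\leq\nu\leq k}\varphi(p^\nu)p^{-3\nu}$ for odd $p$ and $E_{2^k}=1+4\sum_{2\leq\nu\leq k}4^{-\nu}$. So $\sigma_p-E_{p^k}\asymp p^{-2(k+1)}$, much smaller than your guesses. With these exact values your product route also closes. Indeed $C-\prod_pE_{p^{k_p}}\leq C\sum_p(\sigma_p-E_{p^{k_p}})$. The primes $p\leq Q$, where $p^{k_p+1}>Q$, contribute $O(\pi(Q)Q^{-2})$, and the primes $p>Q$ contribute $O(\sum_{p>Q}p^{-2})$.
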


\begin{proof}
Set $A_2(x):=\sum_{1\leq n\leq x}r(n)^2$ and
$\kappa:=8\pi^4/(21\zeta(3))$. The input from \cite{CKO} is
\begin{equation}\label{conv:eq:CKO}
 A_2(x)=\kappa x^2+o(x^2).
\end{equation}
Partial summation gives
\begin{equation}\label{eq:partialsecond}
 \sum_{1\leq n\leq X}a(n)^2=CX+o(X),
 \qquad C=\frac{\kappa}{2\pi^2}.
\end{equation}

Lattice counting in a fixed congruence class gives
\begin{equation}\label{eq:latticeclass}
 \#\{x\in v+M\Z^3:|x|\leq R\}
 =\frac{4\pi R^3}{3M^3}+O_M(R^2+1).
\end{equation}
Summing over the classes representing $b$ and applying partial
summation yields
\begin{align}
 \sum_{\substack{0<n\leq X\\n\equiv b\ (M)}}r(n)
 &=\frac{4\pi P_M(b)}{3M}X^{3/2}+O_M(X+1),\label{eq:APr}\\
 \sum_{\substack{0<n\leq X\\n\equiv b\ (M)}}a(n)
 &=\frac{P_M(b)}M X+O_M(\sqrt X+1).\label{conv:eq:AP}
\end{align}
Consequently
\[
 \lim_{X\to\infty}\frac1X\sum_{n\leq X}a(n)P_M(n)
 =\lim_{X\to\infty}\frac1X\sum_{n\leq X}P_M(n)^2
 =\frac1M\sum_{b\bmod M}P_M(b)^2.
\]
Expanding the square and using \eqref{eq:partialsecond} proves
\eqref{conv:eq:exactB}, including existence of the limit.

Put $S(q,h):=\sum_{x\bmod q}e(hx^2/q)$. Character orthogonality gives
\[
 P_M(n)=M^{-3}\sum_{h\bmod M}S(M,h)^3e(-hn/M).
\]
Writing $h/M=h'/q$ in lowest terms and using
$S(M,h)=(M/q)S(q,h')$ gives
\begin{equation}\label{conv:eq:fourier}
  P_M(n)  =\sum_{q\mid M}\sum_{h\bmod q}^{*}
              c_{q,h}e(-hn/q),\qquad c_{q,h}:=q^{-3}S(q,h)^3.
\end{equation}
The star means $(h,q)=1$, with the constant term included at $q=1$.
Squaring out and again using orthogonality then gives
\begin{equation}\label{eq:finiteenergy}
 \frac1M\sum_{b\bmod M}P_M(b)^2
       =\sum_{q\mid M}\sum_{h\bmod q}^{*}|c_{q,h}|^2.
\end{equation}
The quadratic Gauss-sum evaluation (see \cite[pp.~18--28]{BEW}), for $(h,q)=1$, is
\[
 |S(q,h)|^2=\begin{cases}q,&q\text{ odd},\\
                         0,&q\equiv2\pmod4,\\
                         2q,&4\mid q.
           \end{cases}
\]
For each $q$, this magnitude is independent of the $\varphi(q)$
choices of $h$ coprime to $q$. Hence the total energy factors as
\begin{align}
 \sum_{q\geq1}\sum_{h\bmod q}^{*}|c_{q,h}|^2
 &=\sum_{\substack{q \ge 1\\q \text{ odd}}} \frac{\varphi(q)}{q^3} +8\sum_{\substack{q \ge 1\\4|q}} \frac{\varphi(q)}{q^3},\notag\\
 &=\left(1+\sum_{\nu\geq2}
              \frac{8\varphi(2^\nu)}{2^{3\nu}}\right)
               \sum_{q\ \mathrm{odd}}\frac{\varphi(q)}{q^3}\notag\\
 &=\frac43\cdot\frac67\frac{\zeta(2)}{\zeta(3)}
   =\frac{4\pi^2}{21\zeta(3)}=C.\label{eq:totalenergy}
\end{align}
Here $\sum_{q\geq1}\varphi(q)q^{-3}=\zeta(2)/\zeta(3)$ and
its factor at two is $1+\sum_{\nu\geq1}2^{\nu-1}2^{-3\nu}=7/6$.
Every $q\leq Q$ divides $M_Q$. Therefore
\[
 0\leq\Bnorm{a-P_{M_Q}}^2
 =\sum_{q\nmid M_Q}\sum_h^*|c_{q,h}|^2
 \leq8\sum_{q>Q}\frac{\varphi(q)}{q^3}
 \leq8\sum_{q>Q}q^{-2}\ll Q^{-1}.
\]
\end{proof}

\begin{remark}
The exact matching of the two second-moment constants is essential.
Knowing just an upper bound for $\sum r(n)^2$ would not prove that
the periodic approximation error tends to zero.
\end{remark}

\section{The periodic Green function}\label{sec:transfer}

The periodic Green function is
\begin{equation}\label{conv:eq:cot}
 (\cH_MP)(u,z):=-\frac\pi M\sum_{b=0}^{M-1}P(u+b)
                  \cot\left(\frac{\pi(z-b)}M\right),
 \qquad z\in\cD:=\C\setminus\Z,
\end{equation}
where $P$ is an $M$-periodic sequence and $u\in\Z/M\Z$.
For $P=P_M$, this is the scalar Green function of the periodic
arithmetic model. The weights $P_M(u+k)$ retain their arithmetic correlations.

To see where this function comes from, write $\lambda=m+z$ in the
spectral equation and use $r_3(n)=2\pi\sqrt n\,a(n)$ for $n\geq1$:
\[
 \frac{G(m+z)}{2\pi\sqrt m}
 =-\frac1{2\pi\sqrt m(m+z)}
 +\sum_{n\geq1}\sqrt{\frac nm}\,a(n)
       \left(\frac1{n-m-z}-\frac n{n^2+1}\right).
\]
Replace $a$ by $P_M$ and set $n=m+k$. For fixed $k$, the factor
$\sqrt{1+k/m}$ tends to one. This suggests the local model
\[
 \PV\sum_{k\in\Z}\frac{P_M(m+k)}{k-z}
       =(\cH_MP_M)(m,z).
\]
The cotangent identity and the passage to this principal value are
proved below. In particular, the regularized distant terms leave no
additional constant in the limit.

Write $P_Q:=P_{M_Q}$ and put
\[
 H_Q(u,z):=(\cH_{M_Q}P_Q)(u,z),\qquad
 F_m^{(c)}(z):=\frac{G(m+z)-c}{2\pi\sqrt m}.
\]
The main result of this section transfers the mean-square approximation
of the multiplicities to the spectral equation.

\begin{lemma}
\label{prop:Greenapprox}
For every compact $K\subset\cD$ and fixed $c\in\R$,
\begin{equation}\label{conv:eq:key}
 \limsup_{X\to\infty}\frac1X\sum_{X<m\leq2X}
       \knorm{F_m^{(c)}-H_Q(m,\cdot)}^2\leq C_KQ^{-1}.
\end{equation}
\end{lemma}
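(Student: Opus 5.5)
I will write $F_m^{(c)}-H_Q(m,\cdot)$ as a sum of pieces, each controlled in mean square over $X<m\le 2X$, uniformly for $z\in K$. First I will establish the cotangent identity. For $M$-periodic $P$, grouping $k=b+jM$ and using $\PV\sum_j (b+jM-z)^{-1}=-\frac{\pi}{M}\cot\frac{\pi(z-b)}{M}$ gives
\[
 \PV\sum_{k\in\Z}\frac{P(m+k)}{k-z}=(\cH_MP)(m,z).
\]
Then, with $n=m+k$,
\[
 F_m^{(c)}(z)=\sum_{n\ge1}\sqrt{\tfrac nm}\,a(n)\Bigl(\tfrac1{n-m-z}-\tfrac{n}{n^2+1}\Bigr)+O_K\bigl(m^{-1/2}\bigr),
\]
where the $O$ term absorbs the $n=0$ term and $c/(2\pi\sqrt m)$. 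I will fix a truncation parameter $L$, with $1\le L\le X^{1/2}$ to be chosen later, and split the sum into a near range $|n-m|\le L$ and a far range.

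\emph{Near range.} Here $\sqrt{n/m}=1+O(L/X)$, and $\sum_{|k|\le L}n/(n^2+1)\ll L/X$. So up to negligible errors the near part equals $\sum_{|k|\le L}a(m+k)/(k-z)$. Subtracting the truncated model $\sum_{|k|\le L}P_Q(m+k)/(k-z)$ leaves $\sum_{|k|\le L}e(m+k)/(k-z)$, where $e:=a-P_Q$. By Cauchy--Schwarz with weights $|k-z|^{-1}$,
\[
 \Bigl|\sum_{|k|\le L}\frac{e(m+k)}{k-z}\Bigr|^2\le\Bigl(\sum_k\frac1{|k-z|}\Bigr)\sum_k\frac{|e(m+k)|^2}{|k-z|},
\]
and the first factor is only $\log L$, which is too weak. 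I will therefore split $k$ dyadically and use instead $\sum_k|k-z|^{-1-\delta}$ weights, or more simply $|k-z|^{-2}(1+|k|)$ versus $(1+|k|)^{-1}$. Cauchy--Schwarz with $\sum_k (1+|k|)^{-3/2}\cdot\ldots$ still leaves a borderline sum. The cleaner route, which I will follow, is to view the near sum as a discrete Hilbert transform and use its $\ell^2$-boundedness. The operator $b\mapsto\sum_{k\neq0}b(m+k)/k$ is bounded on $\ell^2(\Z)$, and the difference kernel $1/(k-z)-1/k=z/(k(k-z))$ is summable uniformly for $z\in K$. Applying this to $e\cdot\ind_{[X-L,2X+L]}$ gives
\[
 \sum_{X<m\le2X}\Bigl|\sum_{|k|\le L}\frac{e(m+k)}{k-z}\Bigr|^2\ll_K\sum_{n\le 3X}|e(n)|^2 .
\]
Uniformity over $z\in K$ is obtained by replacing the sup by a Cauchy-integral (mean value) bound over a slightly larger compact set, or by applying the estimate to $z$-derivatives as well. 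Dividing by $X$ and invoking \eqref{conv:eq:Btail} gives $\ll_K Q^{-1}$. This weighted Hilbert-transform step is the one I expect to be the main obstacle, especially the uniformity in $z$.

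\emph{Model tail.} The tail $\PV\sum_{|k|>L}P_Q(m+k)/(k-z)$ pairs $k$ with $-k$, giving $\sum_{k>L}\bigl(P_Q(m+k)-P_Q(m-k)\bigr)/k+O(L^{-1})$. Since $P_Q$ is periodic with mean one, this is bounded by $O_Q(M_Q/L)$ by summation by parts, and so it tends to zero as $L\to\infty$.

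\emph{Far range of $G$.} For $|n-m|>L$ I must show that the regularized tail of $F_m$ is small in mean square. The approach is to replace $a(n)$ by its mean: \eqref{conv:eq:AP} with $M=1$ says $\sum_{n\le x}a(n)=x+O(\sqrt x)$. Summation by parts then reduces the far part to the smooth integral $\frac1{\sqrt m}\int\sqrt t\,\bigl(\frac1{t-m}-\frac{t}{t^2+1}\bigr)\,dt$ over $|t-m|>L$. This principal-value integral is $O(L^{-1}+\ldots)$ after the regularization cancels its divergence; checking that the constant indeed cancels is the content of "no additional constant". The fluctuation term is handled by the mean-square bound $\sum_{m}\bigl|\sum_{|k|>L}(a(m+k)-1)/k\bigr|^2\ll X\bigl(\Bnorm{a-1}^2/L+o(1)\bigr)$, again via Hilbert-transform boundedness together with a dyadic decomposition in $|k|$.

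Finally, I will let $X\to\infty$ and then $L\to\infty$. Every term is then either $o(1)$ or bounded by $C_K\Bnorm{a-P_Q}^2\ll C_KQ^{-1}$, which gives \eqref{conv:eq:key}.
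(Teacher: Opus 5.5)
There is a genuine gap in the far range. Your bound
\[
 \sum_{X<m\le 2X}\Bigl|\sum_{|k|>L}\frac{a(m+k)-1}{k}\Bigr|^2\ll X\bigl(\Bnorm{a-1}^2/L+o(1)\bigr)
\]
does not follow from $\ell^2$-boundedness of the Hilbert transform plus a dyadic decomposition. Those tools give only $\ll X\Bnorm{a-1}^2$. The tail multiplier $T_L(\theta)=\PV\sum_{|k|>L}e(k\theta)/k$ is bounded, but it has size one for $0<\|\theta\|_{\R/\Z}\ll1/L$. Dyadic Cauchy--Schwarz gives $\Bnorm{a-1}$ per block in root mean square, and this does not sum over blocks. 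By \eqref{conv:eq:exactB} with $M=1$, $\Bnorm{a-1}^2=C-1>0$ is a fixed constant. So, as written, the far fluctuation contributes $O(1)$, not $O(Q^{-1})$.

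No estimate with a factor $L^{-1}$ holds for general mean-zero $b$. For $b(n)=\cos(2\pi n/(4L))$ the tail equals $i\sin(2\pi m/(4L))\,T_L(1/(4L))$, whose mean square is $\asymp\Bnorm{b}^2$. Decay in $L$ for $a-1$ is an arithmetic fact (little spectral mass near frequency zero), and you can only reach it through the approximation of $a$ by $P_Q$. Nor can your summation-by-parts step absorb the fluctuation. Against $\sum_{n\le x}a(n)=x+O(\sqrt x)$, the kernel $1/(n-m)$ on $|n-m|>L$ has variation $\asymp1/L$, so the error bound is $O(\sqrt X/L)$. That is useless exactly under your constraint $L\le X^{1/2}$.

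Two repairs are available. The paper's route is to compare $a$ with $P_Q$ in the far range as well. First, bound the full regularized transform $T_z(a-P_Q)(m)$ over all $n$ by $C_K\Bnorm{a-P_Q}^2$; this is Lemma~\ref{conv:lem:transfer}. It uses the discrete Hilbert transform for $X/2\le n\le4X$, and dyadic Cauchy--Schwarz with the decaying regularized kernel for $n<X/2$ and $n>4X$. Second, prove deterministically that $T_zP_Q(m)-H_Q(m,z)\to0$ uniformly on $K$; this is Lemma~\ref{conv:lem:periodic}. There the mean-zero part is handled by bounded interval sums, and the constant part by $\PV\int_0^\infty u^{-1/2}(u-1)^{-1}\,du=0$, which is the ``no additional constant'' check you deferred. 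This removes the need for a near/far split.

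Alternatively, keep your split but reverse the constraint, e.g.\ $L=X^{3/4}$. Summation by parts against $x+O(\sqrt x)$ then controls the whole far range deterministically, with error $O(\sqrt X/L)+o(1)$. In the near range, replacing $\sqrt{n/m}$ by $1$ and removing the regularization cost $O(L/X)$ in root mean square. The truncated Hilbert transforms $\sum_{0<|k|\le L}$ are bounded on $\ell^2$ uniformly in $L$, and the model tail is $O_Q(1/L)$. No fluctuation estimate is then needed.
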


We prove three auxiliary lemmas before returning to the proof of
Lemma~\ref{prop:Greenapprox}. To transfer approximation errors, define
the linear operator
\begin{equation}\label{conv:eq:T}
 (T_zb)(m):=\frac1{\sqrt m}\sum_{n\geq1}\sqrt n\,b(n)
 \left(\frac1{n-m-z}-\frac n{n^2+1}\right).
\end{equation}
Thus
\[
 \frac{G(m+z)}{2\pi\sqrt m}
   =(T_za)(m)-\frac1{2\pi\sqrt m(m+z)},\qquad
 T_za-T_zP_Q=T_z(a-P_Q).
\]
For $\Bnorm b<\infty$, the series converges locally normally on
$\cD$ by Cauchy--Schwarz on dyadic blocks: we have
$\sum_{n\leq Y}|b(n)|^2=O_b(Y)$, and the regularized kernel is
$O_{m,K}(n^{-3/2})$.

\begin{lemma}\label{conv:lem:transfer}
For every compact $K\subset\cD$, there is a constant $C_K$,
independent of $b$, such that
\begin{equation}\label{conv:eq:Tbound}
 \limsup_{X\to\infty}\frac1X\sum_{X<m\leq2X}
 \sup_{z\in K}|(T_zb)(m)|^2
 \leq C_K\Bnorm{b}^{\,2}.
\end{equation}
\end{lemma}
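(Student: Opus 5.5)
Write $n=m+k$ and split $(T_zb)(m)$ into a near range $|k|\leq m/2$ and a far range. On the far range ($n\leq m/2$ or $n\geq 3m/2$), the kernel $\frac1{n-m-z}-\frac n{n^2+1}$ is regular. For $n\leq m/2$ it is $O(1/m)+O(1/n)$. For $n\geq 3m/2$ it equals $\frac{(m+z)n+1}{(n-m-z)(n^2+1)}$ up to sign, which is $O_K(m/n^2)$. I will bound each piece by Cauchy--Schwarz against the Besicovitch norm, in the form $\sum_{n\leq Y}|b(n)|^2\leq 2\Bnorm b^2Y$ for $Y$ large, plus an $O_b(1)$ that disappears after dividing by $X$.

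\textbf{Small $n$.} Here $m^{-1/2}\sum_{n\leq m/2}\sqrt n|b(n)|\,(1/m+1/n)\ll m^{-1/2}\sum_{n\le m}|b(n)|n^{-1/2}$. Cauchy--Schwarz gives $\sum_{n\le m}|b(n)|n^{-1/2}\ll \Bnorm b\sqrt m\,(\log m)^{1/2}$. That logarithm is bad. I will instead organise the sum dyadically: the block $n\asymp 2^{-j}m$ contributes $m^{-1/2}(2^{-j}m)^{1/2}\cdot(2^{-j}m)^{-1}\cdot(2^{-j}m)^{1/2}\Bnorm b\cdot(2^{-j}m)^{1/2}$, up to constants. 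The $1/n$ part of the kernel is really $\frac1{n-m-z}-\frac n{n^2+1}$, and since $n$ is not near $m$ the two terms do not cancel. So the block is $\ll\Bnorm b\,2^{-j/2}$, which sums over $j$. I must also handle $n$ bounded, where $\sqrt n|b(n)|/n$ is fixed and $m^{-1/2}\to0$; this is harmless. \textbf{Large $n$.} The block $n\asymp 2^jm$ contributes $m^{-1/2}(2^jm)^{1/2}\cdot m(2^jm)^{-2}\cdot(2^jm)\Bnorm b$, which is $\ll 2^{-j/2}\Bnorm b$ and sums over $j$. Both far-range bounds are uniform in $m\in(X,2X]$ and $z\in K$, so they contribute $O_K(\Bnorm b^2)$ to the average.

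\textbf{Near range, $|k|\leq m/2$.} Write $\sqrt{n/m}=1+O(|k|/m)$ and $-\frac n{n^2+1}=-\frac1m+O(|k|/m^2)$. The correction terms are $\ll\frac1m\sum_{|k|\le m/2}|b(m+k)|$, which is $O(\Bnorm b)$ by Cauchy--Schwarz. The main term is the truncated discrete Hilbert transform $\sum_{0<|k|\leq m/2}\frac{b(m+k)}{k-z}$. Uniformity in $z\in K$ comes from writing $\frac1{k-z}=\frac1k+\frac{z}{k(k-z)}$. The second piece has absolutely summable kernel $\ll_K k^{-2}$, so Young's inequality bounds its mean square by $C_K\Bnorm b^2$. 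The finitely many $k$ near $K$ are likewise handled, since $K$ is bounded and avoids $\Z$. The $1/k$ piece is independent of $z$ and is a truncated discrete Hilbert transform of $b$ restricted to $[X/2,3X]$. It is bounded on $\ell^2(\Z)$ uniformly in the truncation, with an $m$-dependent cutoff $|k|\le m/2$. I will handle the cutoff by replacing it with $|k|\leq X/2$ and absorbing the difference, which is $\ll\frac1X\sum_{|k|\asymp X}|b|$, into an $O(\Bnorm b)$ error. This gives mean square $\ll\sum_{n\le 3X}|b(n)|^2\ll X\Bnorm b^2$.

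\textbf{Main obstacle.} The main obstacle is this Hilbert-transform step: I need the uniform $\ell^2$ boundedness of truncated discrete Hilbert transforms, for example via Hilbert's inequality or the bounded Fourier multiplier $\sum_{0<|k|\le L}e(k\theta)/k$, together with the bookkeeping that keeps the supremum over $z$ outside the square without losing a logarithm. Finally I will collect terms via $|x_1+\cdots+x_5|^2\leq5\sum|x_i|^2$, take $\limsup$, and note that all constants depend only on $K$.
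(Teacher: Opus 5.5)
Your proposal is correct and is essentially the paper's proof. Both arguments use a diagonal window plus two tails. Both write $\frac1{k-z}=\frac1k+\frac{z}{k(k-z)}$, so the only non-summable kernel is the $z$-independent discrete Hilbert transform, with Young's inequality for the rest, and both use dyadic Cauchy--Schwarz on the tails. The paper uses the fixed window $X/2\le n\le 4X$, absorbs $\sqrt n\,b(n)$ into $f_X$ and bounds $m^{-1/2}\le X^{-1/2}$, which avoids your expansion of $\sqrt{n/m}$ and the $m$-dependent cutoff. It then reaches $\sup_{z\in K}$ through the submean inequality, a step your $z$-independent majorants make unnecessary.

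One small repair is needed. Your working bound $\sum_{n\le Y}|b(n)|^2\le 2\Bnorm{b}^{\,2}Y+O_b(1)$ fails when $\Bnorm{b}=0$ (e.g.\ $b(n)=n^{-1/4}$). Use $(\Bnorm{b}^{\,2}+\varepsilon)Y$ for $Y\ge Y_0(\varepsilon)$ instead, and let $\varepsilon\downarrow0$ at the end, as the paper does.
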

\begin{proof}
Let $B:=\Bnorm b$ and fix $\varepsilon>0$. There is $Y_0$ such that
\begin{equation}\label{eq:prefixbound}
 \sum_{n\leq Y}|b(n)|^2\leq(B^2+\varepsilon)Y
 \qquad(Y\geq Y_0).
\end{equation}
Throughout, $m\in(X,2X]$, $z\in K$, and $X$ is sufficiently large
in terms of $K$. Split the sum into $n<X/2$, $X/2\leq n\leq4X$,
and $n>4X$.

\emph{The middle range.}
Set $f_X(n):=\sqrt n\,b(n)\ind_{[X/2,4X]}(n)$, extended by zero
to $\Z$. Writing $k=n-m$, decompose
\[
 \sum_n\frac{f_X(n)}{n-m-z}
 =\sum_{k\ne0}\frac{f_X(m+k)}k
   +\sum_{k\in\Z}r_z(k)f_X(m+k),
 \qquad
 r_z(k):=\begin{cases}
 -1/z,&k=0,\\
 z/[k(k-z)],&k\ne0.
 \end{cases}
\]
Since $K$ is compact and avoids $\Z$, the finitely many small
 denominators stay away from zero, while $r_z(k)=O_K(k^{-2})$
for large $|k|$. Thus $\sup_{z\in K}\|r_z\|_{\ell^1}<\infty$.
The discrete Hilbert transform and Young's inequality give
\[
 \sum_{m\in\Z}\left|\sum_n\frac{f_X(n)}{n-m-z}\right|^2
 \leq C_K\|f_X\|_{\ell^2}^2.
\]
Since $m>X$ and $n\leq4X$ on the support of $f_X$,
\begin{equation}\label{eq:middleH}
 \frac1X\sum_{X<m\leq2X}
 \left|\frac1{\sqrt m}\sum_n\frac{f_X(n)}{n-m-z}\right|^2
 \leq\frac{C_K}{X}\sum_{n\leq4X}|b(n)|^2
 \ll_K B^2+\varepsilon.
\end{equation}
The regularizing term is controlled by Cauchy--Schwarz:
\[
 \left|\frac1{\sqrt m}\sum_{X/2\leq n\leq4X}
             \frac{n^{3/2}b(n)}{n^2+1}\right|^2
 \leq\frac1X\sum_{n\leq4X}|b(n)|^2
             \sum_{X/2\leq n\leq4X}\frac1n
 \ll B^2+\varepsilon.
\]

\emph{The upper tail.}
Write the regularized kernel as
\[
 K_{m,z}(n):=\sqrt{\frac nm}
     \frac{1+n(m+z)}{(n-m-z)(n^2+1)}.
\]
For $n\asymp Y>4X$ it satisfies
$|K_{m,z}(n)|\leq C_K\sqrt X\,Y^{-3/2}$. Hence
\begin{align*}
 \sum_{n\asymp Y}|K_{m,z}(n)b(n)|
 &\leq C_K\sqrt X\,Y^{-3/2}\sqrt Y
                         \left(\sum_{n\asymp Y}|b(n)|^2\right)^{1/2}\\
 &\leq C_K(B^2+\varepsilon)^{1/2}(X/Y)^{1/2}.
\end{align*}
Summing over $Y=2^j4X$ gives $O_K((B^2+\varepsilon)^{1/2})$.
The subtraction must be combined with the resolvent in this range.

\emph{The lower tail.}
On $n\asymp Y<X/2$, its two kernels satisfy
\[
 \frac{\sqrt{n/m}}{|n-m-z|}\ll_KY^{1/2}X^{-3/2},\qquad
 \sqrt{\frac nm}\frac n{n^2+1}\ll X^{-1/2}Y^{-1/2}.
\]
By \eqref{eq:prefixbound}, the corresponding block sums are at most
\[
 C_K(B^2+\varepsilon)^{1/2}(Y/X)^{3/2},\qquad
 C(B^2+\varepsilon)^{1/2}(Y/X)^{1/2}.
\]
Their dyadic sums converge. The fixed set $n<Y_0$ contributes $o_b(1)$.
Together these estimates prove
\[
 \limsup_{X\to\infty}\frac1X\sum_{X<m\leq2X}|T_zb(m)|^2
 \leq C_K(B^2+\varepsilon).
\]

To pass to the supremum, cover $K$ by finitely many discs
$B(z_j,r_j)$ with $\overline{B(z_j,2r_j)}\subset\cD$. For any
holomorphic $f$ on the larger discs, the submean inequality gives
\[
 \sup_{z\in B(z_j,r_j)}|f(z)|^2
 \leq\frac1{\pi r_j^2}\int_{B(z_j,2r_j)}|f(w)|^2\,dA(w).
\]
Apply the uniform finite-$X$ estimates above under these area integrals,
then take $X\to\infty$ and $\varepsilon\downarrow0$.
\end{proof}

\begin{lemma}\label{lem:cotangent}
For every $M$-periodic sequence $P$,
\begin{equation}\label{eq:periodicPV}
 (\cH_MP)(u,z)=\lim_{L\to\infty}
                      \sum_{|k|\leq L}\frac{P(u+k)}{k-z}.
\end{equation}
Convergence is locally uniform for $z\in\cD$.
\end{lemma}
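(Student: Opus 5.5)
The proof rests on the classical partial-fraction expansion of the cotangent,
\[
 \pi\cot(\pi w)=\lim_{L\to\infty}\sum_{|j|\leq L}\frac1{w+j}
 =\frac1w+\sum_{j\geq1}\frac{2w}{w^2-j^2},
\]
with locally uniform convergence on $\C\setminus\Z$. The second form converges absolutely and locally uniformly, since its terms are $O_K(j^{-2})$. The plan is to split the sum over $k$ into residue classes modulo $M$ and apply this expansion separately to each class.

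Write $k=b+Mj$ with $0\leq b<M$ and $j\in\Z$. Periodicity gives $P(u+k)=P(u+b)$. For a fixed class $b$ we have
\[
 \frac{1}{k-z}=\frac{1}{b+Mj-z}=\frac1M\cdot\frac{1}{j+w_b},
 \qquad w_b:=\frac{b-z}{M},
\]
and $w_b\notin\Z$ because $z\notin\Z$. Summing over all $j$ symmetrically, the expansion yields
\[
 \frac1M\,\pi\cot(\pi w_b)=-\frac{\pi}{M}\cot\frac{\pi(z-b)}{M}.
\]
Multiplying by $P(u+b)$ and summing over $b$ reproduces \eqref{conv:eq:cot}.

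The one point needing care is that the symmetric truncation $|k|\leq L$ does not coincide with a symmetric truncation $|j|\leq J$ within each class. I would handle this with a tail estimate. For $|k|\leq L$ and a fixed $b$, the admissible $j$ run over $-(L+b)/M\leq j\leq(L-b)/M$. This is a symmetric window $|j|\leq J$ with $J\approx L/M$, together with at most $O(1)$ boundary terms at $|j|\approx L/M$. Each boundary term is $\frac1M(j+w_b)^{-1}=O_K(M/L)$. It remains to show that the symmetric partial sums $\sum_{|j|\leq J}(j+w)^{-1}$ converge locally uniformly. This follows by pairing $j$ with $-j$ to obtain the terms $2w/(w^2-j^2)$, which are absolutely summable uniformly for $w$ in a compact set avoiding $\Z$.

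The discrepancy between the two truncations is therefore $\sum_b|P(u+b)|\,O_K(M/L)\to0$ uniformly on compact $K\subset\cD$. This gives \eqref{eq:periodicPV} together with locally uniform convergence. The compact set $K$ translates into compact sets of $w_b$ avoiding $\Z$ for each of the finitely many $b$. I expect no genuine obstacle; the only delicate step is this bookkeeping of uneven endpoints, which the $O(M/L)$ bound settles.
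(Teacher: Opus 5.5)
Your proof is correct and follows essentially the same route as the paper: split into residue classes modulo $M$, apply the symmetric cotangent partial-fraction expansion in each class, absorb the mismatch between the $|k|\leq L$ cutoff and a symmetric $j$-cutoff into at most two boundary terms per class, and obtain local uniformity by pairing $j$ with $-j$. Each boundary term is in fact $O_K(1/L)$ rather than $O_K(M/L)$, but since $M$ is fixed this does not affect the argument.
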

\begin{proof}
For $k=b+Mj$ and $\zeta:=(z-b)/M$, the cotangent partial-fraction
expansion \cite[Eq.~4.22.3]{DLMF} gives
\[
 \PV\sum_{j\in\Z}\frac1{b+Mj-z}
 =\frac1M\PV\sum_{j\in\Z}\frac1{j-\zeta}
 =-\frac\pi M\cot(\pi\zeta).
\]
The cutoff $|b+Mj|\leq L$ differs from a symmetric $j$-cutoff by
at most two terms, each $O_K(L^{-1})$. Summing the $M$ residue
classes proves the formula. Pairing positive and negative $j$ leaves
an $O_K(j^{-2})$ tail, which proves local uniformity.
\end{proof}

\begin{lemma}\label{conv:lem:periodic}
For every fixed $M$-periodic sequence $P$ and compact $K\subset\cD$,
\begin{equation}\label{conv:eq:periodic}
 \sup_{z\in K}|(T_zP)(m)-(\cH_MP)(m,z)|\longrightarrow0
 \quad(m\to\infty).
\end{equation}
In particular there is no additional real constant in this limit.
\end{lemma}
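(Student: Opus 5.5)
Write $n=m+k$, so that
\[
 (T_zP)(m)=\sum_{k\geq1-m}P(m+k)\sqrt{1+k/m}\left(\frac1{k-z}-\frac{m+k}{(m+k)^2+1}\right).
\]
Fix $z\in K$ and compare this with the symmetric partial sums of Lemma~\ref{lem:cotangent}. For $L=m-1$ these are $\sum_{|k|\leq m-1}P(m+k)/(k-z)$, and by Lemma~\ref{lem:cotangent} they converge to $(\cH_MP)(m,z)$ uniformly on $K$. The rate is uniform in the residue $m\bmod M$, since there are only $M$ residues; the pairing argument in that proof gives an $O_K(L^{-1})$ error. I would then split $(T_zP)(m)-\sum_{|k|<m}P(m+k)/(k-z)$ into four pieces:
\begin{enumerate}
\item[(a)] the near range $|k|<m$, with the weight $\sqrt{1+k/m}-1$ on the resolvent;
\item[(b)] the regularizing subtraction over $1\leq n<2m$;
\item[(c)] the far range $k\geq m$, i.e.\ $n\geq 2m$, where the resolvent and the subtraction are combined;
\item[(d)] the lone term $k=-m$, i.e.\ $n=0$, which is absent since the sum starts at $n=1$. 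It is in any case $O(1/m)$.
\end{enumerate}
Each piece must be shown to be $o(1)$ uniformly in $z\in K$.

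The key tool is to replace $P$ by its mean $\bar P=\frac1M\sum_b P(b)$ plus a mean-zero periodic part $P-\bar P$. For the mean-zero part, summation by parts against any smooth weight of bounded variation produces only boundary terms, since partial sums of $P-\bar P$ are bounded by $O_M(1)$. For $\bar P$ the sums are explicit Riemann sums.

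\emph{Piece (a).} We have $\sqrt{1+k/m}-1=O(|k|/m)$ and $1/(k-z)=O(1/|k|)$. A crude bound would give $O(1)$, so the cancellation must be used. After subtracting $\bar P$, the weight
\[
 w(k)=\frac{\sqrt{1+k/m}-1}{k-z}
\]
is smooth in $k$ of size $O(1/m)$. Its variation over $|k|<m$ is $O(1/m)\cdot\log m+$ a boundary contribution near $k=-m$, where the square root has infinite slope. The variation of $\sqrt{1+k/m}$ itself over that interval is $O(1)$, but it is multiplied by $1/|k-z|\approx 1/m$ there. So summation by parts gives $o(1)$. For the constant $\bar P$, the term equals the Riemann sum
\[
 \bar P\sum_{|k|<m}\frac{\sqrt{1+k/m}-1}{k}+O(1/m),
\]
which converges to $\bar P\int_{-1}^1\frac{\sqrt{1+t}-1}{t}\,dt$. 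This limit is a nonzero constant, and it must be cancelled by pieces (b) and (c).

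\emph{Pieces (b) and (c).} For (c), with $n\geq 2m$, the combined kernel is
\[
 \sqrt{n/m}\cdot\frac{1+n(m+z)}{(n-m-z)(n^2+1)}
 =\sqrt{n/m}\left(\frac{m}{n(n-m)}+O(n^{-2})\right)+\ldots.
\]
The mean-zero part again contributes $o(1)$ by summation by parts. The constant part gives a Riemann sum converging to $\bar P\int_2^\infty \sqrt t\,\frac{1}{t(t-1)}\,dt$, with $t=n/m$. Piece (b) has the constant part
\[
 -\bar P\sum_{n<2m}\sqrt{n/m}\,\frac{n}{n^2+1}\to-\bar P\int_0^2 t^{-1/2}\,dt=-2\sqrt2\,\bar P .
\]

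The main obstacle, and the content of the remark ``no additional real constant'', is verifying that the constants from (a), (b) and (c) sum exactly to zero. The cleanest way to organize this is to note that the total constant equals $\bar P$ times
\[
 \lim_{m\to\infty}\left[\PV\sum_{n\geq1}\sqrt{n/m}\left(\frac1{n-m}-\frac n{n^2+1}\right)-\PV\sum_{|k|<m}\frac1k\right],
\]
i.e.\ the continuum integral
\[
 \PV\int_0^\infty \sqrt t\left(\frac1{t-1}-\frac1t\right)dt=\PV\int_0^\infty\frac{dt}{\sqrt t\,(t-1)} .
\]
Substituting $t=x^2$ turns this into $\PV\int_0^\infty \frac{2\,dx}{x^2-1}$. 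This integral equals $0$, since $\int_0^\infty\frac{dx}{x^2-a^2}=0$ in the principal-value sense. Thus the constant vanishes. I would present the computation this way: apply Riemann-sum convergence to the whole constant-$\bar P$ sum at once, with the principal value handled symmetrically around $n=m$. Uniformity in $z\in K$ follows since every error above is uniform on compact subsets of $\cD$.
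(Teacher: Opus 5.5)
Your proof is correct. It ends at the same vanishing integral $\PV\int_0^\infty dt/(\sqrt t\,(t-1))=0$ as the paper, but the bookkeeping is organized differently.

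\emph{The paper's route.} The paper first applies the exact identity
\[
 \sqrt n\left(\frac1{n-w}-\frac n{n^2+1}\right)=\frac w{\sqrt n\,(n-w)}+\frac1{\sqrt n\,(n^2+1)},\qquad w=m+z .
\]
This absorbs the regularizer into a single resolvent sum with weight $\sqrt{m/n}$ and prefactor $1+z/m$, leaving only an $O(m^{-1/2})$ remainder. The paper then fixes a cutoff $L$ and treats the mean-zero part of the tails $|k|>L$ by variation bounds. For the mean part it shows that the signed tail $R_{m,L}$ tends to zero: a Riemann sum handles the range away from $n=m$, and the terms $n=m\pm k$ are paired. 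Finally it lets $L\to\infty$.

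\emph{Your route.} You keep the weight $\sqrt{n/m}$ and the regularizer separate. You compare with the symmetric partial sum at the $m$-dependent cutoff $L=m-1$, which is legitimate: the proof of Lemma~\ref{lem:cotangent} gives an $O(L^{-1})$ rate for fixed $P$, $M$, $K$, uniformly over the $M$ residues of $m$. You then split at $n=2m$.

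\emph{What each buys.} In your split, each constant is a Riemann sum of an integrand with at most an integrable singularity. So no principal-value pairing is needed if you simply evaluate the three pieces:
\[
 \int_{-1}^1\frac{\sqrt{1+t}-1}{t}\,dt=2\sqrt2-2\log(1+\sqrt2),\qquad
 -\int_0^2t^{-1/2}\,dt=-2\sqrt2,\qquad
 \int_2^\infty\frac{dt}{\sqrt t\,(t-1)}=2\log(1+\sqrt2).
\]
These sum to zero. Reassembling them into one principal-value integral, as your last paragraph proposes, would reintroduce the symmetric pairing around $n=m$ that the paper needs for $R_{m,L}$. The paper's identity, in exchange, works with a single kernel and never has to track the regularizer separately.

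\emph{Two small corrections.} Replacing $1/(k-z)$ by $1/k$ in piece (a) costs $O(m^{-1}\log m)$, not $O(1/m)$; this is still $o(1)$. Piece (d) is vacuous, since neither sum contains the index $k=-m$.
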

\begin{proof}
Write $w:=m+z$. The elementary identity
\[
 \sqrt n\left(\frac1{n-w}-\frac n{n^2+1}\right)
 =\frac w{\sqrt n(n-w)}+\frac1{\sqrt n(n^2+1)}
\]
gives
\begin{equation}\label{conv:eq:periodicsum}
 T_zP(m)=\left(1+\frac zm\right)
       \sum_{k\geq1-m}\frac{P(m+k)\sqrt{m/(m+k)}}{k-z}
       +O_P(m^{-1/2}).
\end{equation}
Split $P=\overline P+P_0$, where $P_0$ is periodic with mean zero.

Its interval sums satisfy
$\sup_{a\leq b}|\sum_{n=a}^bP_0(n)|\leq C_P$.
For $L>2\sup_K|z|+2$ and
$g_{m,z}(t):=\sqrt m/(\sqrt t(t-m-z))$,
\[
 g_{m,z}'(t)
 =-\frac{\sqrt m}{2t^{3/2}(t-m-z)}
  -\frac{\sqrt m}{\sqrt t(t-m-z)^2}.
\]
Splitting at $m/2$ and $2m$ gives the variation bound
\[
 |g_{m,z}(1)|+|g_{m,z}(m-L)|+|g_{m,z}(m+L)|
 +\int_{[1,m-L]\cup[m+L,\infty)}|g_{m,z}'(t)|\,dt
 \ll_Km^{-1/2}+L^{-1}.
\]
Summation by parts therefore shows that the weighted tails in
\eqref{conv:eq:periodicsum} are $O_{P,K}(L^{-1}+m^{-1/2})$.
The unweighted periodic tails are $O_{P,K}(L^{-1})$ by the same
argument with $(t-m-z)^{-1}$. On $|k|\leq L$,
\[
 \sqrt{m/(m+k)}=1+O(L/m).
\]
Take $m\to\infty$ and then $L\to\infty$ to obtain the required
limit for $P_0$.

It remains to treat $P=1$. On $|k|>L$, replacing $(k-z)^{-1}$
by $k^{-1}$ costs at most
\[
 C_K\sum_{\substack{n\geq1\\|n-m|>L}}
              \frac{\sqrt{m/n}}{(n-m)^2}
 \ll_K L^{-1}+m^{-1}.
\]
To control the signed tail, put
\[
 R_{m,L}:=\sum_{\substack{n\geq1\\|n-m|>L}}
                   \frac{\sqrt{m/n}}{n-m}.
\]
For fixed $0<\delta<1/2$, its portion with $|n/m-1|\geq\delta$
is a Riemann sum:
\[
 \sum_{|n/m-1|\geq\delta}\frac{\sqrt{m/n}}{n-m}
 \longrightarrow I_\delta:=
 \int_{(0,1-\delta]\cup[1+\delta,\infty)}
                         \frac{t^{-1/2}}{t-1}\,dt.
\]
The endpoint bounds near zero and infinity are respectively
$O(\sqrt\varepsilon)$ and $O(R^{-1/2})$, uniformly for the sums
and integrals. The intervening paired terms satisfy
\[
 \left|\sum_{L<k<\delta m}
       \frac{(1+k/m)^{-1/2}-(1-k/m)^{-1/2}}k\right|
 \leq C\delta.
\]
Finally,
\[
 \lim_{\delta\downarrow0}I_\delta
 =\PV\int_0^\infty\frac{2\,dv}{v^2-1}
 =\left[\log\left|\frac{v-1}{v+1}\right|\right]_0^\infty=0.
\]
Thus $\limsup_m|R_{m,L}|\leq|I_\delta|+C\delta$, so
$R_{m,L}\to0$ for fixed $L$. Restoring $z$ and the local terms,
then letting $L\to\infty$, gives
\[
 T_z1(m)\longrightarrow\PV\sum_{k\in\Z}\frac1{k-z}
                         =-\pi\cot(\pi z).
\]
All estimates are uniform for $z\in K$; no constant remains.
\end{proof}

\begin{proof}[Proof of Lemma~\ref{prop:Greenapprox}]
The definitions give the exact identity
\[
 F_m^{(c)}(z)
 =T_za(m)-\frac1{2\pi\sqrt m(m+z)}-\frac c{2\pi\sqrt m}.
\]
Therefore
\begin{align*}
 F_m^{(c)}(z)-H_Q(m,z)
 &=T_z(a-P_Q)(m)
   +\bigl(T_zP_Q(m)-H_Q(m,z)\bigr)\\
 &\hspace{1cm}-\frac1{2\pi\sqrt m(m+z)}-\frac c{2\pi\sqrt m}.
\end{align*}
The squared mean of the first term is $O_K(Q^{-1})$ by
Lemmas~\ref{conv:lem:B2} and \ref{conv:lem:transfer}.
For fixed $Q$, the second term tends uniformly to zero on $K$
by Lemma~\ref{conv:lem:periodic}. The final terms are
$O_K(m^{-3/2})$ and $O_c(m^{-1/2})$. Apply
$|x_1+\cdots+x_4|^2\leq4\sum_i|x_i|^2$ and take the limsup.
\end{proof}

\section{The limiting meromorphic function}\label{sec:random}

Let $\Omega:=\widehat\Z$ with normalized Haar measure,
and let $U$ be its coordinate random variable. 
For $M\mid N$, averaging over the $N/M$ lifts of a residue $b\pmod M$
gives
\begin{align}
 \E[P_N(U)\mid U\equiv b\pmod M]
 &=\frac MN\frac1{N^2}
   \#\{x\bmod N:|x|^2\equiv b\pmod M\}\notag\\
 &=\frac M{N^3}\left(\frac NM\right)^3 M^2P_M(b)
   =P_M(b).\label{conv:eq:mart}
\end{align}
Since $M_Q\mid M_{Q+1}$, this is a martingale identity. Its second
moments obey
\[
 \E P_Q(U)^2\leq C,\qquad
 \E|P_{Q'}(U)-P_Q(U)|^2
       =\E P_{Q'}(U)^2-\E P_Q(U)^2\quad(Q'\geq Q).
\]
The $L^2$ martingale convergence theorem
\cite[Theorem~4.4.6, with $p=2$]{Durrett} therefore gives a nonnegative limit $A$,
with
\begin{equation}\label{eq:Alimit}
 P_Q\longrightarrow A\text{ in }L^2\text{ and almost surely},\qquad
 \E A=1,\quad\E A^2=C,\quad\E|A-P_Q|^2\ll Q^{-1}.
\end{equation}

\begin{lemma}\label{conv:lem:positive}
Up to a Haar null set,
\begin{equation}\label{conv:eq:E}
 A(U)>0\quad\Longleftrightarrow\quad
 U_2\notin\bigcup_{a\geq0}4^a(7+8\Z_2).
\end{equation}
Call this event $E$ and set $I_k(U):=\ind_E(U+k)$.
Then $\PP(E)=5/6$. Simultaneously for all integer $k$,
$A(U+k)>0$ if and only if $I_k(U)=1$, almost surely.
\end{lemma}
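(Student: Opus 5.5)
The plan is to identify $A$ as an infinite product of local densities and read off positivity prime by prime. By the Chinese remainder theorem, $P_M$ factors over prime powers: if $M=\prod_p p^{e_p}$, then
\[
P_M(b)=\prod_{p\mid M}P_{p^{e_p}}(b),
\]
since counting solutions of $|x|^2\equiv b$ modulo $M$ factors over the coordinates of $x$. The martingale identity \eqref{conv:eq:mart} applied at a single prime shows that $P_{p^e}(U_p)$ is a martingale in $e$. It converges to a local density $\delta_p(U_p)$, which can be computed explicitly by Hensel lifting.

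\emph{Odd primes.} A nonzero ternary form is isotropic and represents everything locally, so $\delta_p(u)>0$ for every $u\ne0$. Moreover, for $p\nmid u$ the density equals $1+O(p^{-2})$; the standard Gauss-sum computation gives $1+\left(\frac{-u}{p}\right)p^{-1}$ up to the usual normalisation.

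\emph{The prime two.} The density $\delta_2(u)>0$ exactly when $u$ is a sum of three squares in $\Z_2$, that is, when $u\notin\bigcup_a4^a(7+8\Z_2)$. On the excluded set, $P_{2^e}(u)=0$ for all large $e$, so $A=0$ there. This already gives the direction $A>0\Rightarrow U\in E$.

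\emph{Main obstacle: the converse.} We must show $A=\prod_p\delta_p(U_p)$ and that this product is almost surely nonzero. The product converges conditionally, like an $L$-function value $L(1,\chi_{-u})$; it does not converge absolutely, because of the $p^{-1}$ terms. I would argue as follows. The factors $\log\delta_p(U_p)$ are independent, and for $p\nmid U_p$ they equal $\epsilon_p/p+O(p^{-2})$. Here $\epsilon_p=\left(\frac{-U_p}{p}\right)$ is a random sign with mean $O(1/p)$, since it is averaged over units. In addition, $\PP(p\mid U_p)=1/p$, and in that case $\log\delta_p$ is bounded below except on events of probability $O(p^{-2})$, since $v_p(U_p)$ is almost surely finite. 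Kolmogorov's three-series theorem then gives almost sure convergence of $\sum_p\log\delta_p(U_p)$ to a finite limit.

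Consequently the product $\prod_{p\le Q}\delta_p$ converges almost surely to a positive limit on $E$. It also converges in $L^2$, hence almost surely, to $A$, because $P_Q(U)=\prod_{p\le Q}P_{p^{e_p(Q)}}(U_p)$. The remaining issue is the finite-level truncation $P_{p^{e}}$ versus $\delta_p$. For $p\le Q$ with $e_p(Q)\to\infty$, the almost sure finiteness of $v_p(U_p)$ means each factor stabilises once $e_p(Q)>v_p(U_p)+1$. The primes $p$ near $Q$, where $e_p(Q)=1$, need a uniform bound: $P_p(u)=1+O(p^{-1})$ for $p\nmid u$, and these are handled inside the same three-series argument.

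\emph{Remaining claims.} Given the characterisation, Haar measure gives
\[
\PP(E)=1-\sum_{a\ge0}\frac18\,4^{-a}=\frac56,
\]
matching \eqref{eq:countS}. For the simultaneous statement, note that $U+k$ has Haar law for each fixed $k$. So $\{A(U+k)>0\}\,\triangle\,\{I_k(U)=1\}$ is null for each $k$, and a countable union of null sets over $k\in\Z$ is null. One small point to check is that $A(U+k)$ is the almost sure limit of $P_Q(U+k)$; this holds because the almost sure convergence statement is applied to the measure-preserving translate.
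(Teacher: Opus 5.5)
Your argument is correct in outline and shares the paper's core ingredients. Both compute explicit local densities by Hensel lifting; your $\delta_p$ is the paper's $\beta_p$. Both run a Kolmogorov series argument for the conditionally convergent odd-prime product; the paper's $L^2$-bounded martingale of centred logarithms is the two-series form of your three-series step. Where you differ is in identifying $A$ with the product.

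The paper sets $B:=\beta_2(U_2)\prod_{p>2}\beta_p(U_p)$ and shows the product converges in $L^2$. It then checks $\E[B\mid U\bmod M]=P_M(U)$ from the local identities $\E[\beta_p(U_p)\mid U_p\bmod p^\nu]=P_{p^\nu}(U_p)$ and independence. It concludes $A=B$ because the congruence $\sigma$-fields generate the Borel $\sigma$-field, and both directions of \eqref{conv:eq:E} then follow from \eqref{conv:eq:beta2}.

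You instead get $A=0$ off $E$ directly from $P_{2^e}(U_2)=0$ for large $e$. You then compare $P_Q(U)=\prod_{p\le Q}P_{p^{e_p(Q)}}(U_p)$ with the partial Euler product pathwise. Your route needs neither the $L^2$ convergence of the product nor the conditional-expectation step. The paper's route avoids all bookkeeping in the levels $e_p(Q)=\lfloor\log_pQ\rfloor$.

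Two details in your sketch need tightening.

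\textbf{Size of $\log\delta_p$ on $\{p\mid U_p\}$.} This event has probability about $1/p$, which is not summable, so it occurs for infinitely many $p$ almost surely. Knowing only that $\log\delta_p$ is bounded below there does not verify the three-series conditions; you need $|\log\delta_p|\ll p^{-1}$ on it. Hensel gives exactly this: $\delta_p(u)=1-p^{-2}$ when $v_p(u)=1$, and $1-p^{-1}\le\delta_p\le1+p^{-1}$ for all $u$. Hence the means and variances of $\log\delta_p(U_p)$ are $O(p^{-2})$. Also, your first claim that $\delta_p=1+O(p^{-2})$ for $p\nmid u$ is a slip: it is $1\pm p^{-1}$, which is exactly why the product converges only conditionally.

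\textbf{Truncation for $p>\sqrt Q$.} For these primes $e_p(Q)=1$. There is no discrepancy when $p\nmid U_p$, since then $P_p(u)=\delta_p(u)$ exactly. The discrepancy occurs only when $p\mid U_p$, where $P_p(U_p)=1$. Because the truncation depends on $Q$, this is a triangular array, not something the three-series theorem covers by itself. Instead, argue as follows:
\begin{enumerate}
\item Almost surely only finitely many $p$ have $v_p(U_p)\ge2$, since $\sum_pp^{-2}<\infty$.
\item Each such factor stabilises once $e_p(Q)>v_p(U_p)$.
\item The primes with $v_p(U_p)=1$ and $p>\sqrt Q$ change the product by a factor between $1$ and $\prod_{p>\sqrt Q}(1-p^{-2})^{-1}\to1$.
\end{enumerate}
Thus the ratio of $\prod_{2<p\le Q}P_{p^{e_p(Q)}}(U_p)$ to $\prod_{2<p\le Q}\delta_p(U_p)$ tends to $1$ almost surely, and your argument closes.
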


\begin{proof}
The issue is strict positivity; nonnegative martingale convergence
alone does not settle it. We compute the local factors
$\beta_p(u):=\lim_{\nu\to\infty}P_{p^\nu}(u)$.

For odd $p$, let
\[
 N_p(u):=\#\{x\in(\Z/p\Z)^3:|x|^2\equiv u\pmod p\}
       =p^2P_p(u).
\]
Character orthogonality and the quadratic Gauss sum give
\[
 N_p(u)=p^2+p\left(\frac{-u}{p}\right)
 \quad(u\not\equiv0\pmod p),\qquad N_p(0)=p^2.
\]
Every nonzero solution modulo $p$ is nonsingular, and each such
solution has $p^{2(\nu-1)}$ lifts to a specified right-hand side
modulo $p^\nu$. When $p\mid u$, there are $p^2-1$ primitive
solutions. The imprimitive solutions have the form $x=py$; if
$p^2\mid u$ their normalized contribution is
\[
 p^{-2\nu}\,p^3\,
   \#\{y\bmod p^{\nu-2}:|y|^2\equiv u/p^2\pmod {p^{\nu-2}}\}
 =p^{-1}P_{p^{\nu-2}}(u/p^2).
\]
It follows that, for $u\ne0$,
\begin{equation}\label{eq:localrecurrence}
 \beta_p(u)=\begin{cases}
 1+p^{-1}(\frac{-u}{p}),&v_p(u)=0,\\
 1-p^{-2},&v_p(u)=1,\\
 1-p^{-2}+p^{-1}\beta_p(u/p^2),&v_p(u)\geq2.
 \end{cases}
\end{equation}
At $u=0$, the limiting value is $1+p^{-1}$. The same finite-level
recursion, and then bounded convergence, give
\begin{equation}\label{eq:localbounds}
 1-p^{-1}\leq\beta_p\leq1+p^{-1},\qquad
 \E\beta_p(U_p)=1,\qquad
 \E(\beta_p(U_p)-1)^2\leq p^{-2}.
\end{equation}

A similar recursive argument at the prime two yields, for
$u=4^a u_0$ with $4\nmid u_0$,
\begin{equation}\label{conv:eq:beta2}
 \beta_2(u)=2^{-a}
 \begin{cases}3/2,&u_0\equiv1,2\pmod4,\\
               1,&u_0\equiv3\pmod8,\\
               0,&u_0\equiv7\pmod8.
 \end{cases}
 \qquad \beta_2(0)=0.
\end{equation}
Finite densities are bounded by $3/2$, so
$\E\beta_2(U_2)=1$ and
$\E[\beta_p(U_p)\mid U_p\bmod p^\nu]=P_{p^\nu}(U_p)$ for all $p$.

The odd-prime product $W:=\prod_{p>2}\beta_p(U_p)$ converges in
$L^2$, since
\[
 \E\prod_{2<p\leq Y}\beta_p^2
 =\prod_{2<p\leq Y}\bigl(1+O(p^{-2})\bigr)\ll1.
\]
For strict positivity, Taylor expansion and independence give
\[
 \sum_{p>2}|\E\log\beta_p|<\infty,\qquad
 \sum_{p>2}\Var(\log\beta_p)<\infty.
\]
The centered logarithmic partial sums are an $L^2$-bounded
martingale. Their almost-sure convergence shows $0<W<\infty$
almost surely. By the Chinese remainder theorem,
\[
 B(U):=\beta_2(U_2)W,\qquad
 \E[B\mid U\bmod M]=P_M(U).
\]
The congruence sigma-fields generate the Borel sigma-field of
$\widehat\Z$; hence $A=B$ in $L^2$ and almost surely.
Formula \eqref{conv:eq:beta2} now proves the assertion, apart from
the null set $U_2=0$. Finally,
\[
 \PP(A=0)=\sum_{a\geq0}\frac1{8\cdot4^a}=\frac16.
\]
Translation invariance and a countable union of null sets give the
simultaneous assertion for every $k\in\Z$.
\end{proof}

\begin{lemma}\label{conv:lem:cyclic}
Let $M\geq1$ and let $b$ be an $M$-periodic complex sequence. For
every compact $K\subset\cD$,
\begin{equation}\label{conv:eq:cyclic}
 \frac1M\sum_{u\bmod M}\knorm{\cH_Mb(u,\cdot)}^2
 \leq\frac{C_K}{M}\sum_{u\bmod M}|b(u)|^2,
\end{equation}
where $C_K$ is independent of $M$ and $b$. If $M\mid N$ and $b$ is
$M$-periodic, then $\cH_Nb=\cH_Mb$.
\end{lemma}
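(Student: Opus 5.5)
The plan is to diagonalize $\cH_M$ on $\Z/M\Z$ by the discrete Fourier transform. By Lemma~\ref{lem:cotangent}, for $M$-periodic $b$,
\[
 (\cH_Mb)(u,z)=\PV\sum_{k\in\Z}\frac{b(u+k)}{k-z},
\]
so for fixed $z$ the map $u\mapsto(\cH_Mb)(u,z)$ is a cyclic convolution of $b$ with the $M$-periodic kernel $\kappa_z(j):=-\frac\pi M\cot(\pi(z+j)/M)$. Concretely, substitute $b'=u+b$ in \eqref{conv:eq:cot} to get $\cH_Mb(u,z)=\sum_{b'\bmod M}b(b')\kappa_z(u-b')$ up to orientation. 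Parseval then gives, for each fixed $z$,
\[
 \frac1M\sum_u|\cH_Mb(u,z)|^2\le\Bigl(\sup_{\xi}|\widehat\kappa_z(\xi)|\Bigr)^2\frac1M\sum_u|b(u)|^2 .
\]

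The first key step is a uniform bound on the Fourier multiplier. Since $\kappa_z$ is the periodization of $k\mapsto(k-z)^{-1}$ in the principal-value sense, its Fourier coefficient at $\xi\in\Z/M\Z$ is the symmetric sum
\[
 m_z(\xi/M):=\PV\sum_{k\in\Z}\frac{e(-k\xi/M)}{k-z}.
\]
This sum has the classical closed form
\[
 m_z(\theta)=\frac{-2\pi i\,e(-\theta z)}{1-e(-z)}\quad(0<\theta<1),\qquad m_z(0)=-\pi\cot(\pi z).
\]
Both expressions are bounded by $C_K$ uniformly in $\theta\in[0,1)$ for $z$ in the compact $K\subset\cD$, because $|e(-\theta z)|\le e^{2\pi|\operatorname{Im}z|}$ and $1-e(-z)\neq0$ off $\Z$. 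Justifying the interchange of the principal value with the finite Fourier sum is routine, since the sums over residue classes converge by Lemma~\ref{lem:cotangent}. Alternatively one can avoid the closed form: write $\kappa_z=-\frac1M\sum_j(\ldots)$ and split off the $\ell^1$ remainder $r_z$ exactly as in the proof of Lemma~\ref{conv:lem:transfer}, leaving the discrete Hilbert kernel $1/k$, whose periodized multiplier is $i\pi(1-2\theta)$ and hence bounded by $\pi$.

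The second step passes from a fixed $z$ to $\sup_{z\in K}$. Here I reuse the submean argument at the end of Lemma~\ref{conv:lem:transfer}. Cover $K$ by discs $B(z_j,r_j)$ with $\overline{B(z_j,2r_j)}\subset\cD$, and bound $\sup_{B(z_j,r_j)}|f|^2$ by the area average of $|f|^2$ over $B(z_j,2r_j)$. Then average in $u$ and apply the pointwise-in-$z$ bound on the compact set $K':=\bigcup_j\overline{B(z_j,2r_j)}$. This gives \eqref{conv:eq:cyclic} with $C_K$ depending only on $K$. I expect the main obstacle to be ensuring that $C_K$ is independent of $M$, which is exactly why the multiplier must be computed or bounded as a function of $\theta=\xi/M$ uniformly on $[0,1)$ rather than via crude bounds on $\cot$. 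Those crude bounds would lose a factor of $M$ from the terms with $b$ near $z$ modulo $M$.

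The final assertion is immediate from Lemma~\ref{lem:cotangent}. An $M$-periodic $b$ is also $N$-periodic when $M\mid N$, and both $\cH_Nb(u,z)$ and $\cH_Mb(u,z)$ equal the same symmetric limit $\lim_L\sum_{|k|\le L}b(u+k)/(k-z)$. Alternatively it follows directly from the multiplication formula $\sum_{j=0}^{N/M-1}\cot(\pi(x+jM)/N)=\frac NM\cot(\pi x/M)$.
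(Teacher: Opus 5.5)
Your proposal is correct and follows essentially the paper's argument. You diagonalize $\cH_M$ by additive characters with multiplier $\PV\sum_k e(\pm k\theta)/(k-z)$, and bound this multiplier uniformly in $\theta\in[0,1)$ on compacts via its closed form; the paper derives that closed form by summing a geometric progression, and your sign convention is immaterial. Then you apply Parseval, pass to $\sup_{z\in K}$ with the submean inequality, and obtain compatibility from the common symmetric principal-value representation of Lemma~\ref{lem:cotangent}.
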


\begin{proof}
Recall from \eqref{conv:eq:cot} that $\cH_M$ acts on $M$-periodic
sequences, and that Lemma~\ref{lem:cotangent} gives the principal-value
representation
\[
 (\cH_Mb)(u,z)=\PV\sum_{k\in\Z}\frac{b(u+k)}{k-z}.
\]
We first prove an $L^2$ estimate for this operator.  Let
\[
 \chi_{r,M}(u):=e(ru/M),\qquad
 \widehat b(r):=\frac1M\sum_{u\bmod M}b(u)e(-ru/M),
 \qquad 0\leq r<M.
\]
Thus $b(u)=\sum_{r=0}^{M-1}\widehat b(r)\chi_{r,M}(u)$.  Applying
$\cH_M$ to a character and using Lemma~\ref{lem:cotangent}, we find
\[
 (\cH_M\chi_{r,M})(u,z)
 =\chi_{r,M}(u)\mathfrak m_z(r/M),
 \qquad
 \mathfrak m_z(\theta):=\PV\sum_{k\in\Z}
                         \frac{e(k\theta)}{k-z}.
\]
Each
Fourier character is an eigenfunction of $\cH_M$, and the multiplier $\mathfrak m_z(\theta)$
is the scalar by which that character is multiplied.

We record the scalar explicitly.  For $r=0$, the cotangent
partial-fraction formula gives $\mathfrak m_z(0)=-\pi\cot(\pi z)$.
If $1\leq r<M$, then \eqref{conv:eq:cot} gives
\[
 \mathfrak m_z(r/M)
 =-\frac\pi M\sum_{a=0}^{M-1}e(ra/M)
       \cot\left(\frac{\pi(z-a)}M\right).
\]
To evaluate the finite sum, set $\omega:=e(1/M)$ and
$\zeta:=e(z/M)$.  Writing the cotangent in exponential form
and summing a geometric progression gives
\[
 \sum_{a=0}^{M-1}\omega^{ra}
       \cot\left(\frac{\pi(z-a)}M\right)
 =2i\sum_{a=0}^{M-1}\frac{\omega^{(r+1)a}}{\zeta-\omega^a}
 =\frac{2iM\zeta^r}{\zeta^M-1}.
\]
Since $\zeta^M-1=2ie^{\pi iz}\sin(\pi z)$, this proves
\begin{equation}\label{eq:meromorphicmultiplier}
 \mathfrak m_z(\theta)=\PV\sum_{k\in\Z}\frac{e(k\theta)}{k-z}
 =\begin{cases}
 -\pi\cot(\pi z),&\theta=0,\\
 -\pi e^{i\pi(2\theta-1)z}/\sin(\pi z),&0<\theta<1.
 \end{cases}
\end{equation}
Here and below $\theta$ is represented in $[0,1)$.  If $K'$ is a
compact subset of $\cD$, then $\sin(\pi z)$ is bounded away from zero
on $K'$, while
$|e^{i\pi(2\theta-1)z}|\leq e^{\pi|\Im z|}$.  Consequently
\[
 \sup_{0\leq\theta<1}\sup_{z\in K'}
       |\mathfrak m_z(\theta)|<\infty.
\]

Fourier inversion and \eqref{eq:meromorphicmultiplier} now give
\[
 (\cH_Mb)(u,z)=\sum_{r=0}^{M-1}\widehat b(r)
   \mathfrak m_z(r/M)\chi_{r,M}(u).
\]
For each fixed $z\in K'$, Parseval therefore implies
\begin{align*}
 \frac1M\sum_{u\bmod M}|(\cH_Mb)(u,z)|^2
 &=\sum_{r=0}^{M-1}|\widehat b(r)|^2
                    |\mathfrak m_z(r/M)|^2\\
 &\leq C_{K'}\sum_{r=0}^{M-1}|\widehat b(r)|^2
 =\frac{C_{K'}}M\sum_{u\bmod M}|b(u)|^2.
\end{align*}
It remains to make this estimate uniform in $z\in K$.  Cover $K$ by
finitely many discs $B(z_j,r_j)$ such that
$\overline{B(z_j,2r_j)}\subset\cD$, and put
$K':=\bigcup_j\overline{B(z_j,2r_j)}$. If $f$ is holomorphic on the larger discs,
the submean inequality gives
\[
 \sup_{z\in B(z_j,r_j)}|f(z)|^2
 \leq\frac1{\pi r_j^2}
       \int_{B(z_j,2r_j)}|f(w)|^2\,dA(w).
\]
Apply this to $f(z)=(\cH_Mb)(u,z)$, sum over $j$, average over
$u\bmod M$, and use the fixed-$w$ Parseval estimate under each area
integral.  We obtain
\[
 \frac1M\sum_{u\bmod M}\knorm{\cH_Mb(u,\cdot)}^2
 \leq\frac{C_K}{M}\sum_{u\bmod M}|b(u)|^2.
\]
If $M\mid N$ and $b$ is $M$-periodic, Lemma~\ref{lem:cotangent}
represents both transforms by the same symmetric principal value.
This proves the compatibility assertion.
\end{proof}

\begin{lemma}\label{conv:lem:H}
There is a random function $H(U,\cdot)\in\cO(\cD)$ such that, for
every compact $K\subset\cD$,
\begin{equation}\label{conv:eq:Hconv}
 \E\knorm{H_Q(U,\cdot)-H(U,\cdot)}^2\longrightarrow0.
\end{equation}
Almost surely, $H$ extends meromorphically to $\C$, is real on the
real axis away from its poles, and has exactly the poles
\[
 \{k\in\Z:I_k(U)=1\},\qquad
 \Res_{z=k}H(U,z)=-A(U+k).
\]
Between consecutive poles it is strictly increasing, with one simple
real zero in each such interval.
\end{lemma}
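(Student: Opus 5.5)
We construct $H$ as an $L^2$ limit of the $H_Q$ and read off its poles from the principal-value representation. First, $H_Q(U,\cdot)$ is Cauchy in $L^2(\Omega;\cO(\cD))$ on each compact $K$. For $Q'\geq Q$, the compatibility part of Lemma~\ref{conv:lem:cyclic} gives $H_{Q'}-H_Q=\cH_{M_{Q'}}(P_{Q'}-P_Q)$, since $P_Q$ is $M_Q$-periodic and hence $M_{Q'}$-periodic. Averaging over $u\bmod M_{Q'}$ is expectation over $U$ by \eqref{eq:Haar}. The $L^2$ bound \eqref{conv:eq:cyclic} then yields
\[
 \E\knorm{H_{Q'}(U,\cdot)-H_Q(U,\cdot)}^2\leq C_K\E|P_{Q'}(U)-P_Q(U)|^2\ll_K Q^{-1},
\]
by \eqref{eq:Alimit}. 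Exhausting $\cD$ by countably many compacts gives a limit $H\in L^2(\Omega;\cO(\cD))$ satisfying \eqref{conv:eq:Hconv}. Along a subsequence we also get almost sure locally uniform convergence, so $H(U,\cdot)$ is holomorphic on $\cD$ and real on $\R\setminus\Z$, because each $H_Q$ is.

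To identify the singularities, we want the representation
\[
 H(U,z)=\PV\sum_{k\in\Z}\frac{A(U+k)}{k-z},
\]
or at least a local version of it. Fix $k_0$ and split $H_Q=\frac{P_Q(U+k_0)}{k_0-z}+\widetilde H_Q$. The first term converges almost surely to $\frac{A(U+k_0)}{k_0-z}$, since $P_Q(U+k_0)\to A(U+k_0)$ almost surely by translation invariance. For the remainder $\widetilde H_Q$ we need convergence in $\cO$ of a punctured neighbourhood that extends holomorphically across $k_0$. The $L^2$ bound from Lemma~\ref{conv:lem:cyclic} controls only compacts avoiding $\Z$. However, $\widetilde H_Q$ is holomorphic on a disc around $k_0$, so the maximum principle over a circle $|z-k_0|=1/2$ transfers the $L^2$ Cauchy property to the full disc. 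Hence the remainder converges to a function holomorphic near $k_0$.

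It follows that $H$ is meromorphic on $\C$ with at most simple poles at integers, and $\Res_{z=k}H=-A(U+k)$. By Lemma~\ref{conv:lem:positive}, this residue is nonzero exactly when $I_k(U)=1$, simultaneously for all $k$ almost surely. At integers with $A(U+k)=0$ the singularity is removable.

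For monotonicity, differentiate: $H_Q'(U,x)=\PV\sum_k P_Q(U+k)/(k-x)^2$, and the sum converges absolutely. By Fatou and the local uniform convergence of derivatives, $H'(U,x)=\sum_k A(U+k)(k-x)^{-2}\geq0$ on $\R\setminus\Z$. It is strictly positive because some $A(U+k)>0$; this holds almost surely since $\PP(E)=5/6$ and the translates cannot all miss $E$, as $U_2+k$ hits a non-$7$ class mod 8. Poles have negative residue $-A$, so $H\to-\infty$ as $x$ increases to a pole from the left and to $+\infty$ just after it. Together with $H'>0$, this gives exactly one simple zero between consecutive poles.

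The remaining point is that poles occur infinitely often in both directions. This holds because among four consecutive integers the $2$-adic criterion always allows one.

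The main obstacle is justifying the derivative formula, i.e.\ passing the limit inside the sum. The plan is to use the $L^2$ limit for $H'$ and to obtain $\sum_k A(U+k)(k-x)^{-2}$ via monotone convergence of the absolutely convergent positive series, dominated by $\E A=1$ in expectation. This gives equality almost surely on a countable dense set of $x$, and then everywhere by continuity.
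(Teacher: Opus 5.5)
Your proposal follows essentially the same route as the paper. You use the Cauchy estimate via compatibility and Lemma~\ref{conv:lem:cyclic}, then remove the single pole at $k_0$ and control the remainder on a small circle. (You use the maximum principle where the paper uses Cauchy's integral formula.) You identify the residues through Lemma~\ref{conv:lem:positive}, and finish with a positive lower bound for $H'$.

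Two corrections are needed.

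\begin{enumerate}
\item You have the boundary limits reversed. With residue $-A(U+k)<0$ one has $H(x)\to+\infty$ as $x\uparrow k$ and $H(x)\to-\infty$ as $x\downarrow k$. As you wrote it, the limits would contradict $H'>0$.
\item The equality for $H'$ that you flag as the main obstacle is not needed. Fatou already gives
\[
 H'(U,x)\ge\sum_k\frac{A(U+k)}{(k-x)^2}>0 .
\]
The paper uses only the single term $A(U+k)/(k-x)^2$ from one surviving pole $k$. By continuity this term also covers the removable integer points.
\end{enumerate}
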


\begin{proof}
If $Q'\geq Q$, then $M_Q\mid M_{Q'}$, and the compatibility in
Lemma~\ref{conv:lem:cyclic} gives
\[
 H_{Q'}(U,\cdot)-H_Q(U,\cdot)
 =\cH_{M_{Q'}}(P_{Q'}-P_Q)(U,\cdot).
\]
The reduction of Haar measure modulo $M_{Q'}$ is uniform, so
\eqref{conv:eq:cyclic} and \eqref{eq:Alimit} give
\[
 \E\knorm{H_{Q'}-H_Q}^2
 \leq C_K\E|P_{Q'}-P_Q|^2
 \leq 2C_K\bigl(\E|P_{Q'}-A|^2+\E|P_Q-A|^2\bigr)
 \ll_KQ^{-1}.
\]
Thus $(H_Q)$ is Cauchy in $L^2(\Omega;C(K))$ for every compact
$K\subset\cD$.  Choose a nested compact exhaustion of $\cD$ and use
the completeness of $C(K)$ on each member of the exhaustion.  The
resulting compatible limits define a random holomorphic function
$H\in\cO(\cD)$ and give \eqref{conv:eq:Hconv}.  A diagonal
subsequence converges almost surely locally uniformly on $\cD$.
After intersecting with a further probability-one event, we also have
\[
 P_Q(U+k)\longrightarrow A(U+k)\qquad(k\in\Z),
\]
simultaneously for all integers $k$, by \eqref{eq:Alimit} and
translation invariance.

We work on this probability-one event.  Fix $k\in\Z$ and
$0<r<1/2$.  The functions
\[
 J_{Q,k}(z):=H_Q(U,z)+\frac{P_Q(U+k)}{z-k}
\]
are holomorphic on $|z-k|<r$: the second term cancels the only pole of
$H_Q$ in that disc.  On the boundary circle, local uniform convergence
of $H_Q$ and convergence of $P_Q(U+k)$ show that $J_{Q,k}$ converges
uniformly.  Cauchy's integral formula then gives a holomorphic limit
$J_k$ on the disc and uniform convergence on every smaller closed
disc.  On the punctured disc we therefore have
\[
 H(U,z)=-\frac{A(U+k)}{z-k}+J_k(z),\qquad J_k\text{ holomorphic}.
\]
Thus $z=k$ is a pole precisely when $A(U+k)>0$, and in that case its
residue is $-A(U+k)$.  Lemma~\ref{conv:lem:positive} identifies these
integers with $I_k(U)=1$. Since $H_Q(U,x)$ are real valued functions for $x\in \R\setminus \Z$, $H(U,x)$ is real too.

For $x\in\R\setminus\Z$ and any surviving pole $k$,
\[
 H_Q'(U,x)=\sum_{j\in\Z}\frac{P_Q(U+j)}{(j-x)^2}
 \geq\frac{P_Q(U+k)}{(k-x)^2}.
\]
The differentiated series is absolutely convergent.  Local
holomorphic convergence permits differentiation, and passage to the
limit gives
\[
 H'(U,x)\geq\frac{A(U+k)}{(k-x)^2}>0.
\]
By continuity the derivative remains positive at every removable
integer point.  If $k<l$ are consecutive poles, the residue
description gives
\[
 H(x)\longrightarrow-\infty\ (x\downarrow k),\qquad
 H(x)\longrightarrow+\infty\ (x\uparrow l),\qquad H'(x)>0.
\]
The intermediate value theorem proves existence and uniqueness of
the zero, and the derivative inequality proves its simplicity.
\end{proof}

\section{Convergence of the spacing measures}\label{sec:zero}

On the event $E$, let
\[
 k_-:=\max\{k<0:I_k(U)=1\},\qquad
 k_+:=\min\{k>0:I_k(U)=1\}.
\]
The residue-class argument proving \eqref{conv:eq:bounded}
applies equally over $\Z_2$: $k_-\geq-3$, $k_+\leq3$ and
$k_+-k_-\leq4$. Let $Z_-(U)$ and $Z_+(U)$ be the unique
zeros of $H(U,\cdot)$ in $(k_-,0)$ and $(0,k_+)$.
Define
\begin{equation}\label{conv:eq:law}
 \mu:=\Law\left(\rho\bigl(Z_+(U)-Z_-(U)\bigr)\,\middle|\,E\right).
\end{equation}
This characterizes the limiting law entirely in terms of congruence
counts, cotangents, and Haar measure.

\begin{lemma}\label{conv:lem:joint}
If $m$ is uniformly sampled from the integers in $(X,2X]$, then
\begin{equation}\label{conv:eq:joint}
 \left(F_m^{(c)},\bigl(\ind_{\cS}(m+k)\bigr)_{-3\leq k\leq3}\right)
 \ \Longrightarrow\
 \left(H(U,\cdot),\bigl(I_k(U)\bigr)_{-3\leq k\leq3}\right)
\end{equation}
in $\cO(\cD)$ with the compact-open topology, times a finite
discrete space.
\end{lemma}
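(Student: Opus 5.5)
We prove \eqref{conv:eq:joint} by a three-step approximation. We compare $F_m^{(c)}$ with $H_Q(m,\cdot)$, then compare the law of $m\bmod M_Q$ with that of $U\bmod M_Q$, and finally compare $H_Q(U,\cdot)$ with $H(U,\cdot)$.

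Since $\cO(\cD)$ with the compact-open topology is metrizable, it suffices to show $\E\Phi(\text{left side})\to\E\Phi(\text{right side})$ for bounded Lipschitz test functions $\Phi$. We may take the metric $d(f,g)=\sum_j 2^{-j}\min(1,\|f-g\|_{K_j})$ for an exhaustion $K_j$, combined with the discrete metric on the finite coordinate. Fix $Q$ and introduce the intermediate random variables
\[
 Y_m^{Q}:=\Bigl(H_Q(m,\cdot),\bigl(\ind_{\cS}(m+k)\bigr)_{|k|\le 3}\Bigr),\qquad
 Y^{Q}:=\Bigl(H_Q(U,\cdot),\bigl(I_k(U)\bigr)_{|k|\le3}\Bigr).
\]

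\textbf{Step 1: $F_m^{(c)}$ versus $H_Q(m,\cdot)$.} By Lemma~\ref{prop:Greenapprox} and Cauchy--Schwarz,
\[
 \limsup_X\frac1X\sum_{X<m\le 2X}\min\bigl(1,\|F_m^{(c)}-H_Q(m,\cdot)\|_{K_j}\bigr)\le (C_{K_j}Q^{-1})^{1/2}.
\]
The discrete coordinates are identical in the two variables. Hence, for large $X$,
\[
 \bigl|\E_m\Phi(F_m^{(c)},\cdot)-\E_m\Phi(Y_m^Q)\bigr|\ll_\Phi \varepsilon_Q,\qquad \varepsilon_Q\to0.
\]
To get uniformity over the infinite sum in $j$, truncate at $j\le J$ and let the tail be at most $2^{-J}$.

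\textbf{Step 2: equidistribution of $m$ versus $U$.} The function $H_Q(m,\cdot)$ depends only on $m\bmod M_Q$, but $\ind_\cS(m+k)$ is not periodic. By the three-squares criterion, however, $\ind_\cS(n)$ differs from the $8\cdot4^{a}$-periodic indicator
\[
 \ind\bigl\{n\notin\textstyle\bigcup_{a<L}4^a(8\Z+7)\bigr\}
\]
only on integers divisible by $4^L$. This set has density $4^{-L}$, and the $\widehat\Z$-analogue for $I_k$ likewise has probability at most $4^{-L}$. Choose $L$ with $8\cdot4^L\mid M_{Q'}$ for some $Q'\ge Q$, and work modulo $N:=M_{Q'}$. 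Then $m\bmod N$ is asymptotically uniform on $\Z/N\Z$, which by \eqref{eq:Haar} is the law of $U\bmod N$. So the laws of $Y_m^Q$ and $Y^Q$ differ by at most $O(4^{-L})$ in total variation, up to an $o(1)$ error as $X\to\infty$. Letting $L\to\infty$ gives $Y^Q_m\Rightarrow Y^Q$ for each fixed $Q$.

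\textbf{Step 3: $H_Q(U,\cdot)$ versus $H(U,\cdot)$.} By \eqref{conv:eq:Hconv}, $\E\|H_Q-H\|_{K_j}^2\to0$ for every $j$, so $Y^Q\to(H(U,\cdot),(I_k)_k)$ in probability in the product metric.

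To conclude, take $\limsup_X$ of $|\E_m\Phi(F_m^{(c)},\cdot)-\E\Phi(H,I)|$. This is bounded by the errors of Steps 1 and 3, both tending to zero as $Q\to\infty$, plus the error of Step 2, which vanishes for each fixed $Q$. This is the standard approximation theorem for weak convergence (Billingsley, Theorem~3.2). The main obstacle is Step 1: making the transfer uniform over the compact exhaustion, and converting the mean-square bound into convergence of the joint law together with the discrete coordinates. The truncation argument above handles both.
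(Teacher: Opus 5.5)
Your proof is correct and follows essentially the paper's route: the mean-square transfer of Lemma~\ref{prop:Greenapprox} through a truncated compact-open metric, equidistribution modulo $M_Q$ for the periodic model, the $L^2$ convergence \eqref{conv:eq:Hconv}, and a periodic $2$-adic approximation of the pole indicators, all combined by the standard approximation theorem for weak convergence. The only difference is bookkeeping: the paper puts the periodic indicator $J_{R(Q)}$ (with $2^{2R(Q)+1}\mid M_Q$) directly into the intermediate variable, whereas you keep the true indicators there and insert the periodic approximation, with its own parameter $L$, inside Step~2, using the slightly cleaner observation that the discrepancy is confined to multiples of $4^L$.
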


\begin{proof}
Equip $\cO(\cD)$ with the bounded compact-open metric
\[
 d(f,g):=\sum_{j\geq1}2^{-j}\min(1,\|f-g\|_{K_j}),
\]
where $(K_j)$ is a cofinal compact exhaustion. Truncating this
series and applying Lemma~\ref{prop:Greenapprox} gives
\begin{equation}\label{eq:metricapprox}
 \lim_{Q\to\infty}\limsup_{X\to\infty}
 \E_X d(F_m^{(c)},H_Q(m,\cdot))=0.
\end{equation}
Here $\E_X$ denotes uniform integer sampling in $(X,2X]$.

For the pole indicators, put
\[
 J_R(u):=1-\sum_{\ell=0}^{R-1}\ind_{4^\ell(7+8\Z_2)}(u),
 \qquad L_R:=2^{2R+1}.
\]
The function $J_R$ is $L_R$-periodic. The omitted exceptional
classes have Haar mass $4^{-R}/6$; counting these same progressions
in an interval gives, uniformly for $|k|\leq3$,
\begin{align*}
 \PP(J_R(U+k)\ne I_k(U))&=4^{-R}/6,\\
 \PP_X(J_R(m+k)\ne\ind_{\cS}(m+k))
 &\ll4^{-R}+X^{-1}\log(2X).
\end{align*}
The second estimate uses only $O(\log X)$ progressions, each
contributing its expected count with an $O(1)$ error.

Choose $R(Q)\to\infty$ with $L_{R(Q)}\mid M_Q$. On the Polish space
\[
 \mathscr X:=\cO(\cD)\times\{0,1\}^7,
 \qquad \Delta((f,i),(g,j)):=d(f,g)+\ind_{\{i\ne j\}},
\]
define
\begin{align*}
 Y_X(m)&:=\left(F_m^{(c)},(\ind_{\cS}(m+k))_{|k|\leq3}\right),\\
 Y_Q(u)&:=\left(H_Q(u,\cdot),(J_{R(Q)}(u+k))_{|k|\leq3}\right),\\
 Y(U)&:=\left(H(U,\cdot),(I_k(U))_{|k|\leq3}\right).
\end{align*}
Equation~\eqref{eq:metricapprox}, the indicator estimates, and
Lemma~\ref{conv:lem:H} imply
\[
 \lim_{Q\to\infty}\limsup_{X\to\infty}
       \E_X\Delta(Y_X,Y_Q(m))=0,
 \qquad
 \E\Delta(Y_Q(U),Y(U))\longrightarrow0.
\]
For fixed $Q$, equidistribution of integers modulo $M_Q$ gives
$Y_Q(m)\Rightarrow Y_Q(U)$ as $X\to\infty$. Testing against
bounded Lipschitz functions and using the two approximation bounds
proves \eqref{conv:eq:joint}.
\end{proof}

\begin{lemma}\label{conv:lem:zeros}
Suppose meromorphic functions $f_j$ with poles only at integers
converge locally uniformly on $\cD$ to $f$. Suppose each function
is real and strictly increasing between its actual poles, with
one zero in each interval between consecutive poles. Suppose
also that the pole indicators on $[-3,3]\cap\Z$ eventually
agree with those of $f$, that $0$ is a pole of $f$, and that
the neighboring poles of $f$ lie in $[-3,3]$.
If all actual residues of $f$ are strictly negative, the zeros
of $f_j$ immediately to the left and right of $0$ converge
to the corresponding zeros of $f$.
\end{lemma}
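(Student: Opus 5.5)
We argue directly with the intermediate value theorem, exploiting that the limit $f$ has genuine poles at the relevant integers with strictly negative residues. Write $k_-<0<k_+$ for the neighbouring poles of $f$, which lie in $[-3,3]$, and $z_-\in(k_-,0)$, $z_+\in(0,k_+)$ for the zeros of $f$. For $j$ large, the pole indicators of $f_j$ on $[-3,3]\cap\Z$ agree with those of $f$. Hence $0$, $k_-$, $k_+$ are actual poles of $f_j$, and no integer strictly between $k_-$ and $0$, or between $0$ and $k_+$, is a pole of $f_j$. So $f_j$ is real and strictly increasing on $(k_-,0)$ and on $(0,k_+)$, with exactly one zero $z_-^{(j)}$ and $z_+^{(j)}$ in each interval. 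These are the zeros immediately left and right of $0$.

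Fix $\varepsilon>0$ small, with $[z_\pm-\varepsilon,z_\pm+\varepsilon]$ inside the corresponding interval. By strict monotonicity of $f$, $f(z_+-\varepsilon)<0<f(z_++\varepsilon)$, and similarly for $z_-$.

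If the points $z_\pm\pm\varepsilon$ are non-integers, they lie in $\cD$, and locally uniform convergence gives the same strict signs for $f_j$ when $j$ is large. Since $f_j$ is continuous and increasing on the interval and has a unique zero there, the zero must lie in $(z_+-\varepsilon,z_++\varepsilon)$. The same holds for $z_-$. Letting $\varepsilon\downarrow0$ gives the convergence.

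The main obstacle is that the interval $(0,k_+)$ may contain integers at which neither $f_j$ nor $f$ has a pole. There, convergence on $\cD$ says nothing directly, and $z_+$ itself could be an integer. We handle this as follows.

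\begin{enumerate}
\item Choose $\varepsilon$ so that $z_\pm\pm\varepsilon\notin\Z$, which is possible since only finitely many values are excluded.
\item At a non-pole integer $n$ in the interval, $f_j$ and $f$ are holomorphic near $n$. Apply Cauchy's integral formula on a small circle around $n$ contained in $\cD$. Uniform convergence on the circle then upgrades to uniform convergence on a closed disc about $n$, exactly as in the proof of Lemma~\ref{conv:lem:H}.
\item Consequently $f_j\to f$ uniformly on compact subsets of $(k_-,0)\cup(0,k_+)$, including non-pole integers. This is more than the argument above needs.
\end{enumerate}

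The negative-residue hypothesis guarantees that $f\to-\infty$ as $x\downarrow k$ and $f\to+\infty$ as $x\uparrow k$ at each actual pole. This is what makes $z_\pm$ exist and lie strictly inside the intervals, so that the $\varepsilon$-windows fit.
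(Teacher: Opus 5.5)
Your proof is correct and follows the paper's argument: eventual agreement of the pole indicators fixes $k_-<0<k_+$ as actual poles of every $f_j$, and the strict signs of $f$ at $z_\pm\pm\varepsilon$, transferred to $f_j$ and combined with monotonicity and uniqueness of the zero, trap $z^{(j)}_\pm$ in $(z_\pm-\varepsilon,z_\pm+\varepsilon)$. Choosing the endpoints $z_\pm\pm\varepsilon$ off $\Z$ makes the paper's upgrade of convergence at removable integers (maximum principle, or your Cauchy-formula step) unnecessary; your closing remark about residues is also superfluous, since the existence of the zeros is part of the hypotheses.
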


\begin{proof}
The neighboring poles eventually equal fixed integers $k_-<0<k_+$.
Any integer inside $(k_-,0)\cup(0,k_+)$ is eventually removable
for every $f_j$. For a small disc about such an integer, uniform
convergence on its boundary and the maximum principle give uniform
convergence throughout the disc. Thus $f_j\to f$ locally uniformly
on both real intervals, including their removable integer points.

Let $z_-$ be the zero in $(k_-,0)$. For every sufficiently small
$\varepsilon>0$,
\[
 f(z_--\varepsilon)<0<f(z_-+\varepsilon).
\]
Uniform convergence at these two points gives the same signs for
$f_j$ when $j$ is large. Its unique zero $z_{-,j}$ must therefore satisfy
\[
 z_--\varepsilon<z_{-,j}<z_-+\varepsilon.
\]
Letting $\varepsilon\downarrow0$ proves $z_{-,j}\to z_-$.
The identical argument on $(0,k_+)$ gives $z_{+,j}\to z_+$.
\end{proof}

\begin{proof}[Proof of Theorem~\ref{conv:thm:main}]
Fix $f\in C([0,10/3])$.  We first average over a dyadic interval of
integers, without conditioning the central integer to lie in $\cS$.
Take any sequence of positive integers $X_n\to\infty$. By
Lemma~\ref{conv:lem:joint} and the Skorokhod representation theorem
\cite[Theorem~6.7]{Billingsley}, copies
\[
 Y_n=(F_n,(\varepsilon_{n,k})_{|k|\leq3}),
 \qquad Y=(H,(I_k(U))_{|k|\leq3})
\]
may be realized on one probability space so that $Y_n\to Y$ almost
surely. For each $n$ with $\varepsilon_{n,0}=1$, let $Z_{n,-}$ and
$Z_{n,+}$ be the zeros of $F_n$ adjacent to the origin; when
$\varepsilon_{n,0}=0$, set $Z_{n,-}=Z_{n,+}=0$. Likewise set
$Z_-(U)=Z_+(U)=0$ off $E$. These variables are measurable:
on an interval $(a,b)$ between marked poles, the unique zero of an
increasing function $g$ is
\[
 \inf\{r\in(\Q\setminus\Z)\cap(a,b):g(r)>0\}.
\]

The discrete coordinates are eventually equal. On $I_0(U)=1$,
Lemma~\ref{conv:lem:zeros} gives
\[
 Z_{n,-}\longrightarrow Z_-(U),\qquad
 Z_{n,+}\longrightarrow Z_+(U),
\]
while on $I_0(U)=0$ all four quantities are eventually zero. Hence
\[
 \varepsilon_{n,0}f\bigl(\rho(Z_{n,+}-Z_{n,-})\bigr)
 \longrightarrow
 I_0(U)f\bigl(\rho(Z_+(U)-Z_-(U))\bigr)
\]
almost surely. The variables are bounded by $\|f\|_\infty$, so their
expectations converge. Since the coupled variables retain their
marginal laws, these expectations are
\begin{align*}
 \lim_{n\to\infty}\frac1{X_n}
   \sum_{\substack{X_n<m\leq2X_n\\m\in\cS}}f(s_m(c))
 &=\E\left[\ind_E(U)
   f\bigl(\rho(Z_+(U)-Z_-(U))\bigr)\right]
 =\rho\int f\,d\mu.
\end{align*}
The last equality uses $\PP(E)=\rho$ and \eqref{conv:eq:law}.
The sequence $(X_n)$ was arbitrary, and changing an endpoint by
$O(1)$ contributes $o(1)$. Hence
\begin{equation}\label{conv:eq:dyadic}
 \frac1X\sum_{\substack{X<m\leq2X\\m\in\cS}}f(s_m(c))
 \longrightarrow\rho\int f\,d\mu
 \qquad(f\in C([0,10/3])).
\end{equation}

Summing dyadically and letting $N(X)/X\to\rho$ then gives
\begin{equation}\label{eq:poleaverage}
 \frac1{N(X)}\sum_{\substack{0<m\leq X\\m\in\cS}}f(s_m(c))
 \longrightarrow\int f\,d\mu.
\end{equation}

Taking $X=m_N$ in \eqref{eq:poleaverage} then gives
\[
 \int f\,d\mu_N^{(c)}=\frac1N\sum_{j=1}^Nf(s_j(c))
 \longrightarrow\int f\,d\mu,
\]
which proves \eqref{conv:eq:limit}.  The limiting function in
Lemma~\ref{conv:lem:joint} is the same for every fixed $c$: in the
normalized spectral equation the only $c$-dependent term is
$-c/(2\pi\sqrt m)$, which tends to zero.  Thus the measure $\mu$
is independent of the interaction parameter.

The remaining properties of $\mu$ follow more or less directly from the preceding work.
\end{proof}

\section{Level repulsion}\label{sec:repulsion}

Section~\ref{sec:zero} identifies the limiting measure with the two
zeros of $H(U,\cdot)$ adjacent to a pole at the origin. On the event
$E:=\{A(U)>0\}$, write these zeros as $-u(U)<0<v(U)$ and set
$d(U):=u(U)+v(U)$. Before unfolding, the small-gap distribution is
\begin{equation}\label{eq:law}
 F_{\rm raw}(s):=\PP(d(U)\leq s\mid E),\qquad
 \mu([0,s])=F_{\rm raw}(s/\rho).
\end{equation}
We shall determine the asymptotic behavior of $F_{\rm raw}(s)$ as
$s\downarrow0$.

We first derive Theorem~\ref{thm:spacing} from the required density
estimate and then prove that estimate. Its strict positivity is proved
in Section~\ref{sec:positivity}.

\subsection{The local Green function and small gaps}\label{sec:conditional}

The factorization $\widehat\Z=\Z_2\times\prod_{p>2}\Z_p$ separates the
two parts of the argument. Write $U=(t,V)$, where $t\in\Z_2$ and
$V=(V_p)_{p>2}$ is Haar on
\[
 \Omega_o:=\prod_{p>2}\Z_p.
\]
By Lemma~\ref{conv:lem:positive}, the limiting residue field factors as
\[
 A(U+k)=\beta_2(t+k)\prod_{p>2}\beta_p(V_p+k).
\]
The odd-prime product is positive almost surely. Thus the event $E$ is
exactly $\{\beta_2(t)>0\}$, while the size of the central residue is
governed by the $2$-adic coordinate. To keep the odd-prime randomness
visible, put
\begin{equation}\label{eq:W}
 W(V):=\prod_{p>2}\beta_p(V_p),\qquad
 A_t(k):=\beta_2(t+k)W(V+k),\qquad R_t:=A_t(0).
\end{equation}
Here $R_t$ is the residue size at the central pole.

We next define the two coefficients that remain after the pole at the
origin is removed:
\begin{equation}\label{eq:hD}
 h_t:=\PV\sum_{k\ne0}\frac{A_t(k)}k,
 \qquad
 D_t:=\sum_{k\ne0}\frac{A_t(k)}{k^2}.
\end{equation}
The principal value is the symmetric limit in $L^2(\Omega_o)$, and
$D_t>0$ almost surely. Separating the central pole gives
\begin{equation}\label{eq:H}
 H_t(z):=-\frac{R_t}{z}+h_t+
 \sum_{k\ne0}A_t(k)\left(\frac1{k-z}-\frac1k\right).
\end{equation}
For Haar-almost every $(t,V)$ this is precisely the function
$H(U,z)$ constructed in Section~\ref{sec:random}. Indeed, the identity
holds for the periodic approximants; \eqref{eq:Alimit} and the operator
bounds in Lemma~\ref{lem:Lq} permit passage to the limit. Normal
convergence away from $\Z$ follows from the summable regularized kernel.

The term $-R_t/z$ is the singular part of $H_t$ at the origin.  After
removing it, the function
\[
 F_t(z):=H_t(z)+\frac{R_t}{z}
\]
extends analytically across $z=0$.  We call $F_t$ the regular part of
$H_t$ at the origin.  In particular,
\[
 F_t(0)=h_t,\qquad F_t'(0)=D_t.
\]
Positivity of the coefficients gives, uniformly for $|x|\leq1/2$,
\begin{equation}\label{eq:localTaylor}
 F_t'(x)=D_t(1+O(|x|)),\qquad
 F_t(x)=h_t+D_tx+O(D_tx^2).
\end{equation}
Thus $R_t,h_t,D_t$ are not auxiliary parameters: they are exactly the
pole strength, the regular value, and the regular slope that determine
the two nearby zeros to second order.

On $E$, let $-u_t<0<v_t$ be the adjacent zeros of $H_t$ and put
$d_t:=u_t+v_t$. The following elementary calculation turns the
small-gap problem into an inequality involving these three local
coefficients.

\begin{lemma}\label{lem:quadratic}
For $R_t>0$ set
\begin{equation}\label{eq:quadratic}
 q_t:=\frac{\sqrt{h_t^2+4D_tR_t}}{D_t}.
\end{equation}
There exist $\varepsilon_0,C>0$ such that
\[
 q_t\leq\varepsilon_0
 \quad\Longrightarrow\quad
 |d_t-q_t|\leq Cq_t^2,
\]
and
\[
 d_t\leq\varepsilon_0
 \quad\Longrightarrow\quad
 |q_t-d_t|\leq Cd_t^2.
\]
\end{lemma}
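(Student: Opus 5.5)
Near the origin the equation $H_t(z)=0$ is $-R_t/z+F_t(z)=0$. The plan is to compare it with the quadratic $-R_t+h_tz+D_tz^2=0$, whose two roots are $z_\pm=(-h_t\pm\sqrt{h_t^2+4D_tR_t})/(2D_t)$. These satisfy $z_+-z_-=q_t$, and both have absolute value at most $q_t$.

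Multiplying by $z$, the zeros of $H_t$ in $(-1/2,1/2)\setminus\{0\}$ are exactly the zeros of
\[
 \Phi(x):=xF_t(x)-R_t=D_tx^2+h_tx-R_t+O(D_t|x|^3),
\]
by \eqref{eq:localTaylor}. On each side of the origin there is at most one zero of $H_t$, by the monotonicity in Lemma~\ref{conv:lem:H}. So once a zero of $\Phi$ is located in $(-1/2,0)$ or $(0,1/2)$, it equals $-u_t$ or $v_t$ respectively.

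\emph{First implication.} Assume $q_t\le\varepsilon_0$. I would evaluate $\Phi$ at $z_\pm\pm\eta$ with $\eta:=C q_t^2$. Write $\Phi=D_t(x-z_+)(x-z_-)+E(x)$ with $|E(x)|\le C_1D_t|x|^3$. At $x=z_++\eta$ the main term is $D_t\eta(z_+-z_-+\eta)\ge D_t\eta q_t=CD_tq_t^3$. The error is at most $C_1D_t(2q_t)^3$. Choosing $C>8C_1$ fixes the sign, and symmetrically at $z_+-\eta$ and at $z_-\pm\eta$. Here $z_+>0>z_-$ because $R_t>0$. When $\eta$ is small relative to $|z_\pm|$, the intermediate value theorem places the two zeros within $\eta$ of $z_\pm$, on the correct sides of $0$. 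Hence $|v_t-z_+|,|u_t+z_-|\le\eta$, and $|d_t-q_t|\le 2Cq_t^2$.

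The main obstacle is when one root, say $z_+$, is tiny compared with $q_t^2$; then the interval $z_+\pm\eta$ crosses $0$. I would handle this using that $H_t\to-\infty$ as $x\downarrow0$, so $\Phi(0^+)=-R_t<0$. The sign change on $(0,z_++\eta]$ still gives $v_t\in(0,z_++\eta]$, so $|v_t-z_+|\le\eta$ holds anyway. The same argument works on the left using $\Phi(0^-)=-R_t$.

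\emph{Second implication.} Assume $d_t\le\varepsilon_0$, so $u_t,v_t\le\varepsilon_0$. Subtract the equations $\Phi(v_t)=0$ and $\Phi(-u_t)=0$ and divide by $d_t$. This gives $D_t(v_t-u_t)+h_t=O(D_td_t^2)$. Multiplying the two equations suitably (Vieta-style) gives $R_t=D_tu_tv_t(1+O(d_t))$. Then
\[
 q_t^2=\frac{h_t^2+4D_tR_t}{D_t^2}=(v_t-u_t)^2+4u_tv_t+O(d_t^3)=d_t^2+O(d_t^3),
\]
which yields $|q_t-d_t|\le Cd_t^2$. All constants come only from the uniform $O(\cdot)$ in \eqref{eq:localTaylor}, so they do not depend on $t$ or $V$.
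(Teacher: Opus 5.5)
Your proof is correct and follows the paper's strategy. You compare the true zeros with the roots of $D_tx^2+h_tx-R_t$ and trap them by sign changes plus monotonicity; for the converse you extract $h_t=D_t(u_t-v_t)+O(D_td_t^2)$ and $R_t=D_tu_tv_t(1+O(d_t))$ from the two zero equations, and $(u_t-v_t)^2+4u_tv_t=d_t^2$ finishes.

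The only difference is technical, in the first implication. The paper keeps $H_t$ and uses $H_t'\ge D_t/2$ with root-adapted windows of width $C_1x_\pm^2$, which never reach the pole, so it needs no small-root case. You instead clear the pole via $\Phi=xF_t-R_t$, use the uniform window $Cq_t^2$, and patch the crossing case with $\Phi(0)=-R_t$. One small correction: at $z_+-\eta$ the main term is $-D_t\eta(q_t-\eta)$, not $-D_t\eta q_t$, so take $C>16C_1$ rather than $8C_1$.
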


\begin{proof}
Suppress the subscript $t$ and write $R,h,D,H,F,u,v,d,q$ for the
quantities above.  Thus
\[
 H(x)=-\frac{R}{x}+F(x),\qquad
 F(x)=h+Dx+O(Dx^2),\qquad
 F'(x)=D(1+O(|x|)).
\]

We first assume that $q$ is small.  Replace $F(x)$ by its linear
approximation $h+Dx$ and consider
\[
 \widetilde H(x):=-\frac R{x}+h+Dx.
\]
Its two roots are
\[
 x_\pm=\frac{-h\pm\sqrt{h^2+4DR}}{2D},
 \qquad x_-<0<x_+,\qquad x_+-x_-=q.
\]
Because the roots have opposite signs, $|x_\pm|\leq q$.  Moreover,
\[
 H(x)=\widetilde H(x)+O(Dx^2),
\]
so that
\[
 H(x_\pm)=O(Dx_\pm^2).
\]
For $|x|$ sufficiently small,
\[
 H'(x)=\frac{R}{x^2}+F'(x)\geq \frac{D}{2}.
\]
Moving a distance $C_1x_\pm^2$ away from $x_\pm$ therefore changes
$H$ by at least $(D/2)C_1x_\pm^2$.  If $C_1$ is sufficiently large,
this change dominates the error $H(x_\pm)=O(Dx_\pm^2)$.  Hence $H$
has opposite signs at the two endpoints
$x_\pm-C_1x_\pm^2$ and $x_\pm+C_1x_\pm^2$.  The true zeros
$z_-=-u$ and $z_+=v$ consequently satisfy
\[
 z_\pm=x_\pm+O(x_\pm^2).
\]
Therefore
\[
 d=z_+-z_-
   =(x_+-x_-)+O(x_+^2+x_-^2)
   =q+O(q^2).
\]

We now prove the converse.  Assume that $d=u+v$ is small.  Since
$-u$ and $v$ are zeros of $H$, we have
\[
 F(v)=\frac{R}{v},\qquad F(-u)=-\frac{R}{u}.
\]
Subtracting these identities gives
\[
 F(v)-F(-u)
 =R\left(\frac1v+\frac1u\right)
 =\frac{Rd}{uv}.
\]
On the other hand, the derivative estimate for $F$ gives
\[
 F(v)-F(-u)
 =\int_{-u}^{v}F'(x)\,dx
 =Dd+O(Dd^2).
\]
Cancelling $d$ from the last two displays yields
\[
 R=Duv(1+O(d)).
\]
Next, the identity $F(v)=R/v$ and the Taylor expansion of $F$ give
\[
 h+Dv+O(Dv^2)=\frac{R}{v}=Du(1+O(d)).
\]
Since $u,v\leq d$, this implies
\[
 h=D(u-v)+O(Dd^2).
\]
Consequently,
\begin{align*}
 h^2+4DR
 &=D^2(u-v)^2+4D^2uv+O(D^2d^3)\\
 &=D^2(u+v)^2+O(D^2d^3)\\
 &=D^2d^2(1+O(d)).
\end{align*}
Here we used the elementary identity
$(u-v)^2+4uv=(u+v)^2$.  Taking the positive square root and dividing
by $D$ now gives
\[
 q=d\sqrt{1+O(d)}=d+O(d^2),
\]
as required.
\end{proof}

For $s>0$, define
\[
 \mathcal Q_t(s):=\{q_t\leq s\}.
\]
Since $D_t>0$, the definition of $q_t$ gives the exact identity
\begin{equation}\label{eq:quadratic-event}
 \mathcal Q_t(s)
 =\left\{
 R_t\leq\frac{s^2D_t}{4},\quad
 |h_t|\leq\bigl(s^2D_t^2-4D_tR_t\bigr)^{1/2}
 \right\}.
\end{equation}
In particular,
\[
 \mathcal Q_t(s)\subseteq
 \left\{R_t\leq\frac{s^2D_t}{4}\right\}
 \cap\{|h_t|\leq sD_t\}.
\]
Thus the residue $R_t$ must be of order at most $s^2D_t$, and the
regular value $h_t$ must lie in an interval of length at most
$2sD_t$ around the origin.

We now rewrite the residue inequality in terms of the $2$-adic
coordinate. Conditional on $\beta_2(t)>0$, there are unique
$\ell\geq0$ and $b\in\{1,3/2\}$ such that
\[
 t=4^\ell t_0,\qquad 4\nmid t_0,\qquad b:=\beta_2(t_0).
\]
The explicit formula \eqref{conv:eq:beta2} gives
\begin{equation}\label{eq:marks}
 \PP(\ell=n)=\tfrac34\,4^{-n}\quad(n\geq0),
 \qquad
 \PP(b=1)=\tfrac15,\quad \PP(b=3/2)=\tfrac45.
\end{equation}
The variables $\ell,b,V$ are independent, and
\begin{equation}\label{eq:residue}
 R_t=2^{-\ell}bW.
\end{equation}
Substitution into \eqref{eq:quadratic-event} yields
\[
 \mathcal Q_t(s)=
 \left\{
 2^{-\ell}bW\leq\frac{s^2D_t}{4},\quad
 |h_t|\leq
 \bigl(s^2D_t^2-4D_t2^{-\ell}bW\bigr)^{1/2}
 \right\}.
\]
For an integer $n$ satisfying $2^{-n}\asymp s^2$, we have
\[
 \PP(\ell=n)=\tfrac34 4^{-n}\asymp s^4.
\]
For each such index, the permitted interval for $h_t$ has length
\[
 2\bigl(s^2D_t^2-4D_t2^{-\ell}bW\bigr)^{1/2}
 \leq 2sD_t.
\]
Corollary~\ref{prop:density} controls this interval jointly with
$D_t$ and $W$ and yields the remaining factor $s$.  More precisely, if
\[
 \log_2(s^{-2})=L+\alpha,\qquad L\in\Z,\quad 0\leq\alpha<1,
\]
and $\ell=L+j$, then
\[
 2^{-\ell}=s^2 2^{\alpha-j},\qquad
 4^{-\ell}=s^4 2^{2(\alpha-j)}.
\]
The dependence on the fractional part $\alpha$ produces the periodic
factor in the final asymptotic.

\subsection{From smoothing to the small-gap law}\label{sec:traces}\label{sec:spacing}

To evaluate \eqref{eq:quadratic-event}, we need the distribution of
$h_t$ in an interval of length $s$, jointly with the variables $D_t$
and $W$. Moreover, on the relevant classes $\ell\to\infty$, so
$t\to0$ in $\Z_2$. The needed input is therefore a density in the
$h_t$ variable that is continuous as $(t,h_t)\to(0,0)$, with enough
uniform moment control in $D_t$ and $W$ to sum over all dyadic classes.

Put
\begin{equation}\label{eq:omega}
 \omega(D,W):=1+D+D^{-1}+W+W^{-1},\qquad (D,W)\in(0,\infty)^2.
\end{equation}
Call $\psi$ admissible if $\psi\in C^2((0,\infty)^2)$ and, for some
finite $C,m$,
\begin{equation}\label{eq:amplitude}
 |\partial^\gamma\psi(D,W)|\leq C\omega(D,W)^m
 \quad\text{for every }|\gamma|\leq2.
\end{equation}
This includes smooth compactly supported tests, constants, and powers
of $D,W$ and their reciprocals.

\begin{proposition}\label{thm:smoothing}
For every admissible $\psi$,
\begin{equation}\label{eq:weightedchar}
 \sup_{t\in\Z_2}
 \left|\E_V\bigl[\psi(D_t,W)e^{i\xi h_t}\bigr]\right|
 \ll_\psi(1+|\xi|)^{-3/2}.
\end{equation}
\end{proposition}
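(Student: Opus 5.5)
We prove the decay by isolating, inside the sum defining $h_t$, a few independent odd-prime coordinates whose influence on $h_t$ is essentially linear with an explicit, non-degenerate coefficient. Conditioning on everything else, each such coordinate should contribute a factor $p^{-1/2}$-type cancellation at frequency $\xi$. The steps, in order, are as follows.

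\emph{Structure of $h_t$.} Fix $t\in\Z_2$ and write $\beta_p(V_p+k)$ for the odd-prime factors. For a single odd prime $p$, the function $V_p\mapsto\beta_p(V_p+k)$ depends, up to an error of size $p^{-2}$ coming from \eqref{eq:localrecurrence}, only on $V_p\bmod p$. Its main part is
\[
1+p^{-1}\Bigl(\frac{-(V_p+k)}{p}\Bigr).
\]
Hence
\[
A_t(k)=\beta_2(t+k)\,W^{(p)}(V+k)\,\beta_p(V_p+k),
\]
where $W^{(p)}$ omits $p$, and
\[
h_t=h_t^{(p)}+p^{-1}\,\PV\sum_{k\ne0}\frac{\beta_2(t+k)W^{(p)}(V+k)}{k}\Bigl(\frac{-(V_p+k)}{p}\Bigr)+O(p^{-2}\cdot\text{norm}).
\]
The plan is to pick a set $\mathcal P$ of primes $p\asymp P$, with $P$ a suitable power of $|\xi|$, and to condition on $V^{(\mathcal P)}:=(V_q)_{q\notin\mathcal P}$. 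For $p\in\mathcal P$, the dependence of $h_t$ on $V_p\bmod p$ is then an explicit function $\phi_p(V_p)$ of amplitude about $1/p$. This is a sum over $k$ of Legendre symbols weighted by the frozen $\beta_2(t+k)W(\cdot)/k$. Near $k$ small the weights are of size $O(1)$, and the map $x\mapsto\sum_k c_k(\frac{x+k}{p})$ is far from constant. Its variance in $x$ is $\asymp\sum_k|c_k|^2\gg1$, because the $\beta_2(t+k)$ with $|k|\le3$ cannot all vanish, by the bounded-gap property.

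\emph{Fourier decay.} Conditional on $V^{(\mathcal P)}$, the coordinates $V_p$, $p\in\mathcal P$, are independent. Their joint effect on $h_t$ is additive, $\sum_{p\in\mathcal P}\phi_p(V_p)$, plus cross terms of order $P^{-2}$, which we control via the $L^2$ operator bounds (the analogue of Lemma~\ref{conv:lem:cyclic}). Thus
\[
\E[e^{i\xi h_t}\mid V^{(\mathcal P)}]\approx\prod_{p\in\mathcal P}\E_{V_p}e^{i\xi\phi_p(V_p)}.
\]
With $|\xi\phi_p|\asymp|\xi|/P$ and $\phi_p$ non-degenerate, each factor is at most $1-c\min(1,(\xi/P)^2)$. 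Choosing $P\asymp|\xi|$, with $|\mathcal P|\gg P/\log P$ primes, gives a bound far smaller than $(1+|\xi|)^{-3/2}$, provided the good (non-degenerate) event for each $p$ has probability bounded below. Weights $\psi(D_t,W)$ also depend on $V_p$. We handle them with the $C^2$ control \eqref{eq:amplitude}: $\psi(D_t,W)$ changes by $O(\omega^m/P)$ when $V_p$ moves. Freezing $\psi$ at a value independent of $\mathcal P$ and Taylor-expanding to second order costs $O(|\mathcal P|^{?}P^{-1})$-type terms. This is where the exponent $3/2$ rather than arbitrary decay should emerge: second-order remainders of size $\omega^m P^{-2}$ summed over many primes, balanced against $P$. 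Moments of $\omega^m$ are finite uniformly in $t$ by \eqref{eq:localbounds} and the log-martingale argument of Lemma~\ref{conv:lem:positive}, extended to negative moments of $W$ and $D_t\ge A_t(\pm1)$-type lower bounds. Uniformity in $t$ holds because only the frozen $2$-adic weights enter, and non-degeneracy uses only that some nearby $\beta_2(t+k)\ge 2^{-a}$ with $|k|\le 3$ is of size $\gg1$. The residue classes $t+k\equiv 1,2\pmod 4$ give $\beta_2\ge1$.

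\emph{Main obstacle.} The hardest step is the lower bound on the non-degeneracy of $\phi_p$ with probability bounded below, uniformly in $t$ and the frozen variables. The risk is cancellation among the Legendre-symbol sum with random weights, including the $2$-adic zeros. We would try first a second-moment computation, $\E_{V_p}|\phi_p-\E\phi_p|^2=p^{-2}\sum_k |c_k|^2(1-O(1/p))$ by orthogonality of shifted Legendre symbols. This would be combined with a Paley–Zygmund argument to convert variance into a characteristic-function bound for each prime.
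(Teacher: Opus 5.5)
\paragraph{Verdict.} There is a genuine gap. Your overall architecture matches the paper's: condition on the coordinates outside a block of odd primes, isolate the conditionally independent one-prime contributions, get contraction from their variance, and use uniform moments in $t$ together with the index where $\beta_2(t+k)=3/2$ for nondegeneracy. But the error bookkeeping does not close, and it fails exactly at $\E[e^{i\xi h_t}\mid V^{(\mathcal P)}]\approx\prod_p\E e^{i\xi\phi_p}$.

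\paragraph{Why the crude bookkeeping fails.} The cross terms are $O(P^{-2})$ individually, but there are about $N^2$ of them. The pieces $h_S$ with $|S|\geq2$ are orthogonal, and in aggregate $\|\sum_{|S|\geq2}h_S\|_2$ has size $\sigma^2=\sum_{p\in\mathcal P}p^{-2}\asymp(P\log P)^{-1}$. So discarding them costs $|\xi|\sigma^2$, which is $\asymp1/\log|\xi|$ for your choice $P\asymp|\xi|$. In the same way, freezing $\psi$ at a block-independent value costs $\|\Delta Z\|\asymp\sigma$. No choice of $P$ rescues this. The one-prime factors multiply to roughly $\exp(-c\xi^2/(P\log P))$, so contraction forces $P\ll\xi^2$ up to logarithms. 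Then $|\xi|\sigma^2\gg|\xi|^{-1}/\log|\xi|$ and $\sigma\gg|\xi|^{-1}(\log|\xi|)^{-1/2}$. Neither is integrable, and integrability is exactly what Corollary~\ref{prop:density} needs.

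\paragraph{The missing idea.} Keep the first-order interaction terms instead of bounding them. Write $h_t=\ell+\cR$, with $\ell$ the empty and one-prime parts, and expand
$\psi(Z)e^{i\xi\cR}=\psi(\overline Z)+\nabla\psi(\overline Z)\cdot\Delta Z+i\xi\psi(\overline Z)\cR+\mathcal E$.
Each retained term splits into pieces depending only on the outside coordinates and $(V_p)_{p\in S}$. For $|S|\leq N/4$ the good primes outside $S$ still contract, and the large $S$ carry total weight $e^{-cY}$ (Lemma~\ref{lem:block-smoothing}). Only $\E|\mathcal E|\ll\sigma^2+\xi^2\sigma^4$ is paid in absolute value. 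With block scale $Y=|\xi|^{7/4}$ this is $\ll|\xi|^{-3/2}$, while the contraction exponent $\xi^2a^2/(Y\log Y)$ still tends to infinity for a threshold $a=|\xi|^{-1/16}$.

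\paragraph{Further repairs.}
\begin{enumerate}
\item The choice $Y=|\xi|^{7/4}$ also puts you in the small-phase regime $|\xi|M/p\leq1/8$, which your variance-to-contraction step needs. When $|\xi\phi_p|\asymp1$, as with $P\asymp|\xi|$, a variance lower bound does not exclude concentration of $\xi\phi_p$ near a translate of $2\pi\Z$, and Paley--Zygmund does not address this.
\item There is no decay at all off the nondegeneracy event, so its complement must have probability about $|\xi|^{-13/8}$, not merely be bounded away from one. You get this from thresholds shrinking with $|\xi|$, high negative moments of $W$, and then H\"older against $\psi$.
\item The shifted Legendre sums over all $k\neq0$ alias modulo $p$. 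Your orthogonality computation therefore applies only after truncating to $|k|\leq K<p/2$ and controlling the tail as in Lemma~\ref{lem:tail}.
\end{enumerate}
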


The role of the weight $\psi$ is to retain the dependence of
$h_t$ on $D_t$ and $W$; an unweighted density for $h_t$ would not
control the region in \eqref{eq:quadratic-event}. Only integrability
of the Fourier bound is needed. We first use the proposition to finish
the small-gap calculation. Its proof occupies
Sections~\ref{sec:local}--\ref{sec:retain}.

\begin{corollary}\label{prop:density}
There are finite positive measures $\nu_{t,y}$ on $(0,\infty)^2$ such that
\begin{equation}\label{eq:trace}
 \PP(h_t\in dy,\ (D_t,W)\in dD\,dW)=dy\,\nu_{t,y}(dD,dW).
\end{equation}
They are weakly continuous in $(t,y)\in\Z_2\times\R$, and for every
finite $m\geq0$,
\begin{equation}\label{eq:tracemoments}
 \sup_{t,y}\int\omega(D,W)^m\,\nu_{t,y}(dD,dW)<\infty.
\end{equation}
For each admissible $\psi$, the density
$f_\psi(t,y):=\int\psi\,d\nu_{t,y}$ is continuous.
\end{corollary}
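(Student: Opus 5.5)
The plan is to obtain the disintegration by Fourier inversion from Proposition~\ref{thm:smoothing}. Fix admissible $\psi$. The bound \eqref{eq:weightedchar} is integrable in $\xi$, since $3/2>1$. We therefore define
\[
 f_\psi(t,y):=\frac1{2\pi}\int_\R e^{-i\xi y}\,\E_V\bigl[\psi(D_t,W)e^{i\xi h_t}\bigr]\,d\xi .
\]
This is a bounded function of $y$, uniformly in $t$. Standard Fourier inversion for the finite signed measure $\psi(D_t,W)\,d\PP$ pushed forward by $h_t$ shows that $f_\psi(t,\cdot)$ is its Lebesgue density. Boundedness, uniform in $t$, comes straight from the integrable majorant $(1+|\xi|)^{-3/2}$. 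Continuity in $y$ follows by dominated convergence.

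For continuity in $t$, I would show that $t\mapsto(h_t,D_t)$ is continuous in probability from $\Z_2$, i.e.\ in $L^2(\Omega_o)$ after truncation. This should use the finite dependence of $A_t(k)$ on $t\bmod 2^{N}$ for $|k|$ small, together with the $L^2$ tail bounds on the principal value defining $h_t$; the operator bounds of Lemma~\ref{lem:Lq} referred to above are what control those tails uniformly in $t$. Moment bounds on $\omega(D_t,W)$ then give $\E[\psi(D_t,W)e^{i\xi h_t}]$ continuous in $t$ for each $\xi$. These moment bounds are implicit in admissibility and in the proposition's finiteness; I would record them from $D_t\geq A_t(\pm1)$-type lower bounds and the $L^2$-convergence of $W$ and $W^{-1}$ moments. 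Dominated convergence in $\xi$ with the uniform majorant then gives joint continuity of $f_\psi$ in $(t,y)$.

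To build $\nu_{t,y}$, I would apply this to $\psi$ ranging over a countable dense family of $C_c^\infty((0,\infty)^2)$ functions. For fixed $(t,y)$ the map $\psi\mapsto f_\psi(t,y)$ is linear. It is also positive, because $f_\psi(t,\cdot)$ is a continuous density of a positive measure when $\psi\geq0$, so its value at each point is a limit of averages, which are $\geq0$. The Riesz representation theorem then yields a positive Radon measure $\nu_{t,y}$ on $(0,\infty)^2$ with $f_\psi=\int\psi\,d\nu_{t,y}$ for compactly supported $\psi$.

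Next I would obtain the moment bound \eqref{eq:tracemoments}. Take $\psi=\omega^m\chi_R$ with smooth cutoffs $\chi_R\uparrow1$. These are admissible with constants uniform in $R$, since $\omega$-derivatives of cutoffs adapted to dyadic shells in $D,W,D^{-1},W^{-1}$ stay bounded by powers of $\omega$. The implied constant in \eqref{eq:weightedchar} should depend only on $(C,m)$ in \eqref{eq:amplitude}, so $\sup_{t,y}f_{\psi}$ is bounded uniformly in $R$. Monotone convergence gives \eqref{eq:tracemoments} and finiteness of $\nu_{t,y}$.

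With the moments in hand, the remaining claims follow. Tightness from \eqref{eq:tracemoments} and continuity of $f_\psi$ for compactly supported $\psi$ give weak continuity of $(t,y)\mapsto\nu_{t,y}$. The identity $\int\psi\,d\nu_{t,y}=f_\psi$ extends to all admissible $\psi$ by truncation, using the moment bound to control the tails uniformly. Continuity of $f_\psi$ for general admissible $\psi$ follows the same way. Finally, \eqref{eq:trace} follows because both sides agree when integrated against products $g(y)\psi(D,W)$.

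The main obstacle I expect is the uniformity step. The constant in Proposition~\ref{thm:smoothing} must depend on $\psi$ only through its admissibility constants, and the unbounded weights $\omega^m$ must be reached by truncations. If the proposition's constant is not explicitly uniform in that sense, I would instead reprove the needed bound by rerunning its proof with $\psi$ replaced by $\omega^m\chi_R$. I would also check that $t$-continuity holds at $t=0$, where $\beta_2(t)=0$ and the central term is absent; there $A_t(0)$ does not enter $h_t$ or $D_t$, so continuity should persist.
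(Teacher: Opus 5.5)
Your proposal is correct and follows the paper's architecture: Fourier inversion from \eqref{eq:weightedchar}, continuity in $t$ plus dominated convergence, positivity of $\psi\mapsto f_\psi(t,y)$ and Riesz, then cutoffs $\chi_R$. For $t$-continuity the paper simply quotes \eqref{eq:continuityt} from Lemma~\ref{lem:moments}.

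The one genuine divergence is the moment step, and there the paper's route is lighter. You bound $f_{\omega^m\chi_R}$ by applying Proposition~\ref{thm:smoothing} to the whole family $\omega^m\chi_R$. That needs the implied constant to depend on $\psi$ only through $(C,m)$ in \eqref{eq:amplitude}. This is true on inspecting the proof, since it enters only through $C_\psi$ in \eqref{eq:Taylorerror} and $\omega$-moments, but the proposition does not state it.

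The paper instead uses the positivity you already established. For fixed $R$, $\omega^m(1-\chi_R)\ge0$ is admissible; its constants depend on $R$, which is harmless because only the sign of its density is used. Hence $f_{\omega^m\chi_R}\le f_{\omega^m}$. The bound $\sup_{t,y}f_{\omega^m}<\infty$ then needs the proposition only for the single weight $\omega^m$.

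The same comparison, $0\le f_{\omega^m}-f_{\omega^m\chi_R}\le R^{-r}f_{\omega^{m+r}}$, gives the exact identity $\int\omega^m\,d\nu_{t,y}=f_{\omega^m}(t,y)$, including total mass $f_1$. Together with $|f_{\psi(1-\chi_R)}|\le C f_{\omega^m(1-\chi_R)}$, it also extends $\int\psi\,d\nu_{t,y}=f_\psi$ to every admissible $\psi$. So the obstacle you anticipate is avoided entirely.

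A minor point on $t$-continuity: the delicate points are not $t=0$, since the central term never enters $h_t$ or $D_t$. They are the $t$ that are $2$-adically close to $-k$ for some $k\ne0$. There $\beta_2(t+k)$ is continuous but not locally constant, so ``finite dependence on $t\bmod 2^N$'' fails. Continuity of $\beta_2$ at $0$, or the Fourier tail bound \eqref{eq:dyadic}, handles these points.
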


\begin{proof}
Write $Z_t:=(D_t,W)$. For admissible $\psi$, the finite signed measure
\[
 \lambda_{t,\psi}(B):=\E_V\left[\psi(Z_t)
                         \ind_{\{h_t\in B\}}\right]
\]
has integrable Fourier transform by \eqref{eq:weightedchar}. Fourier
inversion \cite[Theorem~3.3.14]{Durrett} therefore gives the continuous
density
\begin{equation}\label{eq:inverse}
 f_\psi(t,y)=\frac1{2\pi}\int_\R e^{-i\xi y}
       \E_V[\psi(Z_t)e^{i\xi h_t}]\,d\xi.
\end{equation}
Lemma~\ref{lem:moments}, \eqref{eq:continuityt}, and truncation in
$\omega$ show that the integrand is continuous in $t$ for each $\xi$;
the majorant \eqref{eq:weightedchar} makes $f_\psi$ jointly continuous
in $(t,y)$.

If $\psi\geq0$, then $\lambda_{t,\psi}$ is positive, so its continuous
density is nonnegative. Thus, for fixed $(t,y)$, the map
$\psi\mapsto f_\psi(t,y)$ is a positive functional on
$C_c^\infty((0,\infty)^2)$, bounded by
$\|\psi\|_\infty f_1(t,y)$. It extends to $C_c((0,\infty)^2)$, and
Riesz representation gives a finite positive measure $\nu_{t,y}$.

Choose smooth cutoffs $0\leq\chi_R\leq1$ with $\chi_R\uparrow1$,
equal to one on $\{\omega\leq R\}$ and supported on
$\{\omega\leq2R\}$. Positivity
of weighted densities gives, for every $m>0$,
\begin{equation}\label{eq:masscontrol}
 0\leq f_1(t,y)-f_{\chi_R}(t,y)
 =f_{1-\chi_R}(t,y)
 \leq R^{-m}f_{\omega^m}(t,y)\ll_mR^{-m}.
\end{equation}
Hence $\nu_{t,y}((0,\infty)^2)=f_1(t,y)$. More generally, for $r>0$,
\[
 0\leq f_{\omega^m}(t,y)-f_{\omega^m\chi_R}(t,y)
 \leq R^{-r}f_{\omega^{m+r}}(t,y).
\]
Letting $R\to\infty$ gives
\[
 \int\omega^m\,d\nu_{t,y}=f_{\omega^m}(t,y),
 \qquad \sup_{t,y}f_{\omega^m}(t,y)<\infty,
\]
which proves \eqref{eq:tracemoments}; the same cutoff argument extends
$\int\psi\,d\nu_{t,y}=f_\psi(t,y)$ to every admissible $\psi$.
Continuity on compactly supported tests, together with
\eqref{eq:tracemoments}, gives weak continuity of
$(t,y)\mapsto\nu_{t,y}$. Finally, for $g\in C_c(\R)$ and
$\psi\in C_c((0,\infty)^2)$,
\[
 \int_\R g(y)\!\int\psi\,d\nu_{t,y}\,dy
   =\E_V[g(h_t)\psi(Z_t)],
\]
and a monotone-class argument gives \eqref{eq:trace}.
\end{proof}

The measure relevant to a small gap is therefore
\begin{equation}\label{eq:nustar}
 \nu_*:=\nu_{0,0}=\lim_{(t,y)\to(0,0)}\nu_{t,y}.
\end{equation}
It is finite and has moments of every order appearing below.
Section~\ref{sec:positivity} proves that its total mass is positive.

We can now state the coefficient in the small-gap asymptotic. Put
$\pi_1:=1/5$, $\pi_{3/2}:=4/5$. For $\alpha\in\R$, set
$\eta_j:=2^{\alpha-j}$ and define
\begin{equation}\label{eq:profile}
 \Phi(\alpha):=\frac32\sum_{j\in\Z}\eta_j^2
 \sum_{b\in\{1,3/2\}}\pi_b
 \int\bigl(D^2-4DbW\eta_j\bigr)_+^{1/2}\,\nu_*(dD,dW).
\end{equation}
We prove
\begin{equation}\label{eq:asymptotic}
 F_{\rm raw}(s)=s^5\Phi(\log_2(s^{-2}))+o(s^5).
\end{equation}
The coefficient in Theorem~\ref{thm:spacing} is then
\begin{equation}\label{eq:profileconversion}
 \Psi(\alpha):=\rho^{-5}\Phi(\alpha+2\log_2\rho).
\end{equation}
\begin{proof}[Proof of Theorem~\ref{thm:spacing}]
Let
\[
 Q(s):=\PP(q_t\leq s\mid\beta_2(t)>0),\qquad
 \log_2(s^{-2})=L+\alpha,
\]
where $L\in\Z$ and $0\leq\alpha<1$, and put
$\eta_j:=2^{\alpha-j}$. For $\ell\geq0$ and
$b\in\{1,3/2\}$ write
\[
 \overline\nu_{\ell,b,y}:=
   \E_{t\mid\ell,b}[\nu_{t,y}].
\]
Since $2^{-\ell}=s^2\eta_j$ and
$4^{-\ell}=s^4\eta_j^2$ when $\ell=L+j$, equations
\eqref{eq:marks}, \eqref{eq:residue}, \eqref{eq:quadratic-event}, and
\eqref{eq:trace} give the single identity
\begin{equation}\label{eq:classintegral}
 \frac{Q(s)}{s^5}
 =\frac34\sum_{b\in\{1,3/2\}}\pi_b
   \sum_{\substack{j\in\Z\\j\geq-L}}\eta_j^2
   \int_\R\!\int
   \ind_{\{z^2\leq D^2-4DbW\eta_j\}}
   \,\overline\nu_{L+j,b,sz}(dD,dW)\,dz .
\end{equation}
The indicator forces $D\geq|z|$ and $D/W\geq4b\eta_j$. Since
$(D/W)^3\max(1,D^2)\ll\omega(D,W)^8$,
\eqref{eq:tracemoments} gives, uniformly in every parameter,
\begin{equation}\label{eq:classtail}
 \int
   \ind_{\{z^2\leq D^2-4DbW\eta_j\}}
   \,\overline\nu_{L+j,b,sz}(dD,dW)
 \ll (1+z^2)^{-1}\min(1,\eta_j^{-3}).
\end{equation}
Consequently
\[
 \sum_{j\in\Z}\eta_j^2\min(1,\eta_j^{-3})\ll1
 \qquad(0\leq\alpha<1),
\]
so \eqref{eq:classintegral} has an integrable majorant on
$\Z\times\R$ and, in particular, $Q(s)\ll s^5$.

Now let $s_n\downarrow0$ with $\alpha(s_n)\to\alpha_0$. For fixed
$j$, $L_n+j\to\infty$, and every $t$ in the corresponding class lies
in $4^{L_n+j}\Z_2$. Corollary~\ref{prop:density} therefore gives
\[
 \overline\nu_{L_n+j,b,s_nz}\Longrightarrow\nu_*,
 \qquad
 \eta_j(s_n)\longrightarrow\eta_j^0:=2^{\alpha_0-j}.
\]
For each $(D,W)$, the boundary equation
$z^2=D^2-4DbW\eta_j^0$ has at most two solutions in $z$; Fubini shows
that the limiting boundary has $\nu_*$-mass zero for almost every $z$.
After restricting to $|z|\leq R$ and $\omega(D,W)\leq R$, the defining
inequalities converge uniformly. Weak convergence and the boundary
statement therefore give convergence of the corresponding integrals;
\eqref{eq:classtail} and \eqref{eq:tracemoments} remove the truncation
uniformly. Dominated convergence in \eqref{eq:classintegral} now gives
\begin{align*}
 \lim_{n\to\infty}\frac{Q(s_n)}{s_n^5}
 &=\frac34\sum_{j\in\Z}(\eta_j^0)^2
   \sum_b\pi_b\int_\R\!\int
   \ind_{\{z^2\leq D^2-4DbW\eta_j^0\}}\,d\nu_*\,dz\\
 &=\frac32\sum_{j\in\Z}(\eta_j^0)^2
   \sum_b\pi_b\int
   (D^2-4DbW\eta_j^0)_+^{1/2}\,d\nu_*
 =\Phi(\alpha_0).
\end{align*}
The same majorant proves continuity of $\Phi$, while
$\eta_j(\alpha+1)=\eta_{j-1}(\alpha)$ proves one-periodicity.
Compactness of $[0,1]$ and the preceding subsequence argument give
\[
 Q(s)=s^5\Phi(\log_2(s^{-2}))+o(s^5).
\]

Lemma~\ref{lem:quadratic} gives
\[
 Q(s-Cs^2)\leq F_{\rm raw}(s)\leq Q(s+Cs^2).
\]
Continuity and periodicity of $\Phi$ therefore prove
\eqref{eq:asymptotic}. Proposition~\ref{thm:positive} gives
$\nu_*\ne0$; on a compact box of positive $\nu_*$-mass the integrand
in \eqref{eq:profile} is positive for all sufficiently small
$\eta_j$. Thus $\Phi>0$, and continuity and periodicity give
$\min\Phi>0$. Unfolding gives \eqref{eq:profileconversion} and
\eqref{eq:unfoldedasymptotic}.
Finally Theorem~\ref{conv:thm:main} and Portmanteau
\cite[Theorem~2.1]{Billingsley} give, for each fixed $s>0$,
\[
 \limsup_{N\to\infty}\frac1N\#\{j\leq N:s_j(c)\leq s\}
 \leq\mu([0,s])\ll s^5.
\]
\end{proof}

\subsection{Uniform estimates for the local variables}\label{sec:local}

It remains to prove Proposition~\ref{thm:smoothing}. This subsection
establishes two preliminary facts. The local products and the Green
coefficients have moments of every fixed order, uniformly in the
$2$-adic parameter $t$. They also vary continuously with $t$. We begin
by recording the Fourier information on the local densities that will
later produce cancellation.

The Gauss-sum computation used in
\eqref{conv:eq:fourier} gives, for $p$ odd, $\nu\geq1$, and
$(a,p)=1$,
\begin{equation}\label{eq:fourier}
 |\widehat\beta_p(a/p^\nu)|=p^{-3\nu/2}.
\end{equation}
The constant Fourier coefficient is one. For
$\delta_p:=\beta_p-1$ this yields
\begin{equation}\label{eq:vp}
 \E\delta_p=0,\quad |\delta_p|\leq p^{-1},\quad
 \upsilon_p:=\E\delta_p^2=\sum_{\nu\geq1}\varphi(p^\nu)p^{-3\nu}
       =\frac1{p(p+1)}.
\end{equation}
If $p\nmid k-l$, the covariance is exactly
\begin{equation}\label{eq:cov}
 \E[\delta_p(V_p+k)\delta_p(V_p+l)]=-p^{-3}.
\end{equation}
Indeed, if
\[
 c_q(r):=\sum_{\substack{a\bmod q\\(a,q)=1}}e(ar/q),
\]
then Parseval expresses the covariance as
$\sum_{\nu\geq1}p^{-3\nu}c_{p^\nu}(k-l)$. When $p\nmid k-l$,
$c_p(k-l)=-1$ and the terms with $\nu\geq2$ vanish.

At the prime two, write
\[
 \beta_2(t)=\sum_{\alpha\in\Z[1/2]/\Z}c_2(\alpha)e_\alpha(t).
\]
For nonzero $\alpha$, let $q(\alpha)$ denote its denominator in
lowest terms. The constant coefficient is one, the coefficients with
$q(\alpha)=2$ vanish, and every coefficient with
$q(\alpha)=2^\nu$, $\nu\geq2$, has modulus
$2^{3/2}2^{-3\nu/2}$. Consequently
\begin{equation}\label{eq:dyadic}
 \sum_\alpha|c_2(\alpha)|=2+\sqrt2,\qquad
 \sum_{q(\alpha)>2^{2\ell}}|c_2(\alpha)|\ll2^{-\ell}.
\end{equation}
This series converges uniformly, including at zero.

For a function $C$ on $\Omega_o$ put
\[
 \mathcal H_t C:=\PV\sum_{k\ne0}\frac{\beta_2(t+k)C(V+k)}k,
 \qquad
 \mathcal D_t C:=\sum_{k\ne0}\frac{\beta_2(t+k)C(V+k)}{k^2}.
\]
\begin{lemma}\label{lem:Lq}
For $1<q<\infty$,
\begin{equation}\label{eq:Lq}
 \|\mathcal H_t C\|_q+\|\mathcal D_t C\|_q\leq C_q\|C\|_q,
\end{equation}
uniformly in $t$. For $\ell\geq0$, if $t-t'\in4^\ell\Z_2$, then
\[
 \|\mathcal H_t-\mathcal H_{t'}\|_{q\to q}
 +\|\mathcal D_t-\mathcal D_{t'}\|_{q\to q}\ll_q2^{-\ell}.
\]
\end{lemma}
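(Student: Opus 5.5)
The plan is to remove the $2$-adic weight by Fourier expansion and then apply Calderón transference to the translation action on $\Omega_o$. For each truncation parameter $L$ put
\[
 \mathcal H_t^{(L)}C:=\sum_{0<|k|\leq L}\frac{\beta_2(t+k)C(V+k)}k .
\]
By \eqref{eq:dyadic}, the Fourier series $\beta_2(t+k)=\sum_\alpha c_2(\alpha)e_\alpha(t)e(\alpha k)$ converges absolutely and uniformly in $t$ and $k$, with $e_\alpha(t+k)=e_\alpha(t)e(\alpha k)$. This gives
\[
 \mathcal H_t^{(L)}C=\sum_\alpha c_2(\alpha)e_\alpha(t)\,\mathcal H^{(L)}_\alpha C,
 \qquad
 \mathcal H^{(L)}_\alpha C(V):=\sum_{0<|k|\leq L}\frac{e(\alpha k)C(V+k)}k .
\]
Since $|e_\alpha(t)|=1$ and $\sum_\alpha|c_2(\alpha)|=2+\sqrt2$, it will suffice to prove $\|\mathcal H^{(L)}_\alpha\|_{q\to q}\leq C_q$ uniformly in $\alpha$ and $L$, and then to pass to the principal value.

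\textbf{Uniform bound for the modulated pieces.} On $\ell^q(\Z)$ the modulated truncated operator satisfies
\[
 \sum_{0<|k|\leq L}\frac{e(\alpha k)f(m+k)}k
 =e(-\alpha m)\sum_{0<|k|\leq L}\frac{(e(\alpha\,\cdot)f)(m+k)}k .
\]
It is therefore a truncated discrete Hilbert transform conjugated by a unimodular multiplication. The truncated discrete Hilbert transforms are bounded on $\ell^q(\Z)$, $1<q<\infty$, uniformly in $L$; this is the classical M.~Riesz / Calderón--Zygmund bound. The shift $V\mapsto V+1$ preserves Haar measure on $\Omega_o$.

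Calderón's transference principle then gives $\|\mathcal H^{(L)}_\alpha\|_{L^q(\Omega_o)\to L^q(\Omega_o)}\leq C_q$. The argument runs as follows. Fix $V$ and apply the $\ell^q$ bound to the sequence $k\mapsto C(V+k)\ind_{|k|\leq N+L}$ on the window $|m|\leq N$. Then average over $V$ using translation invariance, and let $N\to\infty$ so that the padding ratio $(N+L)/N$ tends to one.

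\textbf{The operator $\mathcal D_t$.} This case is elementary. Since $|\beta_2|\leq3/2$ and $\sum_k k^{-2}<\infty$, Minkowski's inequality and translation invariance give the bound directly, for every $1\leq q\leq\infty$.

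\textbf{Existence of the principal value.} Summing the $\alpha$-series, the truncations $\mathcal H_t^{(L)}$ are bounded on $L^q$ uniformly in $L$ and $t$. It remains to show that they converge, which I would do on a dense class and extend by uniform boundedness. Take $C$ depending only on $V\bmod M$ for some odd $M$; such functions are dense in $L^q(\Omega_o)$. For such $C$ and a fixed $\alpha$ with dyadic denominator, the sequence $e(\alpha k)C(V+k)$ is periodic in $k$, so Lemma~\ref{lem:cotangent} (at $z=0$, after pairing $\pm k$) gives pointwise convergence as $L\to\infty$ with bounds uniform in $V$. The $\alpha$-tail is then handled by the absolute summability \eqref{eq:dyadic}. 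Together with the uniform bounds, this shows that the symmetric limit defining $\mathcal H_t$ exists in $L^q$ and inherits \eqref{eq:Lq}.

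\textbf{Continuity in $t$.} Suppose $t-t'\in4^\ell\Z_2$. For every $\alpha$ with $q(\alpha)\leq2^{2\ell}$ we have $e_\alpha(t)=e_\alpha(t')$, so
\[
 \mathcal H_t-\mathcal H_{t'}=\sum_{q(\alpha)>2^{2\ell}}c_2(\alpha)\bigl(e_\alpha(t)-e_\alpha(t')\bigr)\mathcal H_\alpha .
\]
Its norm is at most $2C_q\sum_{q(\alpha)>2^{2\ell}}|c_2(\alpha)|\ll_q2^{-\ell}$ by \eqref{eq:dyadic}. The identical argument with kernel $k^{-2}$ handles $\mathcal D_t-\mathcal D_{t'}$.

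The main obstacle I expect is the transference step, specifically keeping the constant uniform in the modulation $\alpha$ and the truncation $L$. The modulation identity above is what makes the constant independent of $\alpha$. A secondary technical point is justifying the interchange of the $\alpha$-sum with the principal-value limit, which again relies on the absolute convergence in \eqref{eq:dyadic}.
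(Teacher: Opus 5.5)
Your proposal is correct and follows essentially the paper's route. Like the paper, you expand $\beta_2$ in its absolutely convergent dyadic Fourier series and bound each modulated discrete Hilbert transform on $\ell^q(\Z)$ uniformly in the modulation. You then transfer to $L^q(\Omega_o)$ via the measure-preserving shift $V\mapsto V+1$ and sum at cost $\sum_\alpha|c_2(\alpha)|=2+\sqrt2$. You treat $\mathcal D_t$ by Minkowski, and you obtain continuity in $t$ from the cancellation of all frequencies with $q(\alpha)\leq2^{2\ell}$ together with \eqref{eq:dyadic}.

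The only real difference is the regularization. You use symmetric truncations, which are uniformly bounded on $\ell^q$ and have finitely supported kernels, so transference is immediate; you then get the principal value directly in $L^q$ from periodic test functions. The paper instead uses Abel weights $r^{|k|}$ and identifies the $r\uparrow1$ limit with the symmetric principal value through the $L^2$ multipliers. One small correction: on periodic test functions the convergence you need is just Dirichlet's test after pairing $\pm k$, not Lemma~\ref{lem:cotangent} at $z=0$, since that lemma is stated only for $z\notin\Z$.
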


\begin{proof}
The discrete Hilbert transform is bounded on $\ell^q(\Z)$
\cite[Theorem~1.1]{BK}. Modulation and Poisson averaging give the
same uniform bound for the kernels $r^{|k|}e(k\alpha)/k$, $0<r<1$.
Calder\'on's transference principle \cite{Calderon}, applied to the
shift $V\mapsto V+1$, therefore gives
\[
 \left\|\sum_{k\ne0}
   \frac{r^{|k|}e(k\alpha)C(V+k)}k\right\|_q
 \leq C_q\|C\|_q.
\]
As $r\uparrow1$, density of character polynomials gives a strong
$L^q$ limit. In $L^2$, the uniformly bounded symmetric partial-sum
multipliers identify this limit with the stated principal value.

Expanding the absolutely convergent Fourier series of $\beta_2$ gives
\[
 \|\mathcal H_tC\|_q
 \leq C_q\sum_\alpha|c_2(\alpha)|\,\|C\|_q,
 \qquad
 \|\mathcal D_tC\|_q
 \leq\tfrac32\sum_{k\ne0}k^{-2}\|C\|_q.
\]
If $t-t'\in4^\ell\Z_2$, the frequencies with
$q(\alpha)\leq2^{2\ell}$ cancel. Applying \eqref{eq:dyadic} to the
remaining frequencies proves both operator-continuity estimates.
\end{proof}

\begin{lemma}\label{lem:moments}
For any finite set $\cP$ of odd primes put
\[
 B(V):=\prod_{p\text{ odd},\ p\notin\cP}\beta_p(V_p),\qquad
 \overline W:=B(V),\qquad \overline D:=\mathcal D_t B.
\]
For every fixed $q>0$,
\[
 \sup_{\cP,k,t}\E\left[
 B(V+k)^q+B(V+k)^{-q}+\overline D^q+\overline D^{-q}
 +|\mathcal H_tB|^q\right]<\infty.
\]
In particular, $W,D_t$ have uniformly bounded positive and negative
moments, and $h_t$ has uniformly bounded absolute moments.
For $q>1$ and $t-t'\in4^\ell\Z_2$,
\begin{equation}\label{eq:continuityt}
 \|h_t-h_{t'}\|_q+\|D_t-D_{t'}\|_q\ll_q2^{-\ell}.
\end{equation}
\end{lemma}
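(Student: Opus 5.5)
The plan is to handle the multiplicative variables $B(V+k)^{\pm q}$ first, then deduce the bounds for $\mathcal H_tB$ and $\overline D^{\pm q}$ from them, and finally prove the continuity estimate \eqref{eq:continuityt}.

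\textbf{Moments of $B$.} By translation invariance of Haar measure, $B(V+k)$ has the same law as $B(V)$, so $k$ plays no role. Independence of the coordinates gives
\[
 \E B^{\pm q}=\prod_{p\notin\cP}\E\beta_p(V_p)^{\pm q}.
\]
By \eqref{eq:localbounds} and \eqref{eq:vp}, $\beta_p=1+\delta_p$ with $|\delta_p|\leq p^{-1}$, $\E\delta_p=0$ and $\E\delta_p^2\leq p^{-2}$. A Taylor expansion then gives
\[
 \E(1+\delta_p)^{\pm q}=1+O_q(p^{-2})\qquad(p\geq p_0(q)).
\]
For the finitely many $p<p_0(q)$, the bound $\beta_p\geq1-p^{-1}\geq2/3$ shows that each factor is bounded by a constant depending only on $q$. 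Since every factor of the product is at most $1+O_q(p^{-2})$ or an absolute constant, removing the primes in $\cP$ can only shrink the product up to the fixed finite set $p<p_0$. This gives a bound uniform in $\cP$, and it covers $W$ as the case $\cP=\emptyset$.

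\textbf{Moments of $\mathcal H_tB$ and $\overline D$.} For $q>1$, Lemma~\ref{lem:Lq} gives
\[
 \|\mathcal H_tB\|_q+\|\mathcal D_tB\|_q\ll_q\|B\|_q,
\]
uniformly in $t$. Small $q$ then follows by Hölder. The negative moments of $\overline D$ are the main point. I would use positivity: every term of $\mathcal D_tB$ is nonnegative, so
\[
 \overline D\geq\frac{\beta_2(t+k)B(V+k)}{k^2}
\]
for any single $k$ with $\beta_2(t+k)>0$. Among $k\in\{1,2,3,4\}\setminus\{0\}$ such a $k$ always exists, since both $t+1$ and $t+2$ cannot lie in the exceptional set, and the argument already used for \eqref{conv:eq:bounded} supplies this. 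The difficulty is that $\beta_2(t+k)$ can be tiny when $t+k$ is highly divisible by $4$. However, $t+1$ and $t+2$ cannot both be divisible by $4$. Choosing $k\in\{1,2,3\}$ with $4\nmid t+k$ and $t+k\not\equiv7\pmod 8$ gives $\beta_2(t+k)\geq1$ by \eqref{conv:eq:beta2}. I need to check that such a $k$ exists among three consecutive values; if not, I would use $k\in\{-3,\dots,3\}\setminus\{0\}$, where it clearly does. Then
\[
 \overline D^{-q}\leq 81\,B(V+k)^{-q},
\]
and the first step applies. This is the step I expect to be the main obstacle, since one must make sure the chosen $k$ avoids the small $2$-adic weights uniformly in $t$.

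\textbf{Continuity in $t$.} Write $h_t=\mathcal H_tW$ and $D_t=\mathcal D_tW$, with the principal value taken in $L^q$. The operator continuity estimate of Lemma~\ref{lem:Lq}, together with $\|W\|_q\ll_q1$, gives
\[
 \|h_t-h_{t'}\|_q+\|D_t-D_{t'}\|_q\ll_q2^{-\ell}\|W\|_q\ll_q2^{-\ell},
\]
which is \eqref{eq:continuityt}. The statements for $W$, $D_t$ and $h_t$ are then the special case $\cP=\emptyset$.
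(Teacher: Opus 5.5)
Your proposal is correct and follows essentially the same route as the paper. Both arguments use independence plus a Taylor expansion for $\E B^{\pm q}$, Lemma~\ref{lem:Lq} for $\mathcal H_tB$, $\overline D$ and the continuity estimate, and a single positive term of $\mathcal D_tB$ for the negative moments of $\overline D$.

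There are two small slips, neither affecting the argument:
\begin{enumerate}
\item From $\beta_2(t+k)\geq1$ and $|k|\leq3$ you get $\overline D^{-q}\leq 9^qB(V+k)^{-q}$, not $81\,B(V+k)^{-q}$.
\item Your aside that $t+1$ and $t+2$ cannot both be exceptional is false: $111$ and $112$ are both exceptional. You do not use it, since your three-consecutive-values argument is what actually works.
\end{enumerate}
The paper's choice $k_0=\pm1$ with $\beta_2(t+k_0)=3/2$ is the cleanest version of that argument.
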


\begin{proof}
Taylor expansion gives $\E\beta_p^{\pm q}=1+O_q(p^{-2})$.
Independence and the almost-sure positivity proved above imply
\begin{equation}\label{eq:productmoments}
 \sup_{\cP,k}\E B(V+k)^{\pm q}
 \leq\prod_{p>2}(1+C_qp^{-2})<\infty.
\end{equation}
Higher moments justify passage to the infinite products.
Minkowski bounds $\overline D$ by the summable weights $k^{-2}$.
For inverse moments choose
\begin{equation}\label{eq:k0}
 k_0:=\begin{cases}1,&t\equiv0,1\pmod4,\\-1,&t\equiv2,3\pmod4.
 \end{cases}
\end{equation}
Then $\beta_2(t+k_0)=3/2$ and
$\overline D\geq\tfrac32B(V+k_0)$, so \eqref{eq:productmoments}
also bounds $\E\overline D^{-q}$. Lemma~\ref{lem:Lq} bounds
$\mathcal H_tB$. Taking $\cP=\varnothing$ proves the assertions
for $W,D_t,h_t$, including \eqref{eq:continuityt}; lower moments
follow from H\"older.
\end{proof}

The lemma justifies the principal value in \eqref{eq:hD} and the
continuity in \(t\) used above. Finally,
\[
 \E\sum_{k\ne0}\frac{A_t(k)}{k^2}<\infty,
\]
so the \(O(k^{-2})\) regularized kernel gives the normal convergence
asserted in \eqref{eq:H}.

\subsection{Independent prime blocks}\label{sec:prime}

We now expose a block of large odd primes. After conditioning on all
coordinates outside the block, the contribution of each block prime is
independent. Each contribution is small, but the product of their
characteristic functions gives strong Fourier decay. The point of this
subsection is to show that the interactions involving two or more block
primes are small enough to retain rather than discard.

Let $Y$ be large, set
\[
 \cP:=\{p:Y<p\leq2Y\},\qquad N:=|\cP|\asymp Y/\log Y,
\]
and let $\cF$ be generated by the odd-prime coordinates outside
$\cP$. Remove the block from the odd-prime product by writing
\[
 B(V):=\prod_{\substack{p>2\\p\notin\cP}}\beta_p(V_p),
 \qquad B_k:=B(V+k).
\]
Then
\[
 W(V+k)=B_k\prod_{p\in\cP}
               \bigl(1+\delta_p(V_p+k)\bigr).
\]
Put
\begin{equation}\label{eq:scales}
 b_Y:=\sum_{p\in\cP}p^{-1}\ll(\log Y)^{-1},\qquad
 \sigma^2:=\sum_{p\in\cP}p^{-2}\asymp(Y\log Y)^{-1}.
\end{equation}
For $S\subseteq\cP$, put
\[
 W_S:=B(V)\prod_{p\in S}\delta_p(V_p),\qquad
 h_S:=\mathcal H_tW_S,\qquad D_S:=\mathcal D_tW_S.
\]
Thus $S$ records exactly which block primes occur in a term. The
product expansion is exact and gives
\begin{equation}\label{eq:block}
 h_t=h_\varnothing+\sum_{p\in\cP}h_p+\cR,
 \qquad \cR:=\sum_{|S|\geq2}h_S.
\end{equation}
Conditional on $\cF$, the one-prime terms
$h_p=h_{\{p\}}$ are independent and centered. The variables appearing
in the weight $\psi$ have the simultaneous expansion
\[
 Z:=(D_t,W),\qquad \overline Z:=(\mathcal D_tB,B),
\]
and
\begin{equation}\label{eq:Delta}
 \Delta Z:=Z-\overline Z
   =\sum_{S\ne\varnothing}(D_S,W_S).
\end{equation}
Conditional on $\cF$, each $S$-term uses only the coordinates in $S$.

\begin{lemma}
\label{lem:blockmoments}
For every fixed $q>1$,
\begin{equation}\label{eq:Rq}
 \|\cR\|_q\ll_q\sigma^2,
 \qquad
 \|\Delta Z\|_q\ll_q\sigma.
\end{equation}
Moreover, for every $S\subseteq\cP$,
\begin{equation}\label{eq:subsetnorm}
 \|h_S\|_q+\|D_S\|_q+\|W_S\|_q
 \ll_q\prod_{p\in S}p^{-1}.
\end{equation}
The coordinates of $Z$ and $\overline Z$ are pointwise comparable:
\begin{equation}\label{eq:compareZ}
 e^{-2b_Y}\leq\frac{D_t}{\overline D}\leq e^{b_Y},
 \qquad
 e^{-2b_Y}\leq\frac{W}{\overline W}\leq e^{b_Y}.
\end{equation}
\end{lemma}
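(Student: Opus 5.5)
The plan is to prove \eqref{eq:subsetnorm} first and derive the other bounds from it by summing over subsets. Fix $S\subseteq\cP$. The $S$-term uses the coordinates in $S$ only through $\prod_{p\in S}\delta_p(V_p+k)$, and each factor satisfies $|\delta_p|\leq p^{-1}$ pointwise by \eqref{eq:vp}. Hence
\[
 |W_S|\leq\prod_{p\in S}p^{-1}\,B(V).
\]
Lemma~\ref{lem:moments}, applied with the finite set $\cP$ removed, bounds $\|B\|_q$ uniformly, so $\|W_S\|_q\ll_q\prod_{p\in S}p^{-1}$. For $h_S$ and $D_S$ we apply Lemma~\ref{lem:Lq} with input $C=W_S$. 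These operators are bounded on $L^q(\Omega_o)$ uniformly in $t$, and $W_S$ is a function on $\Omega_o$ whose $L^q$ norm has just been bounded. This gives \eqref{eq:subsetnorm}, with a constant that does not depend on $S$ or $Y$.

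Next, the bounds \eqref{eq:Rq}. The crude route is to sum \eqref{eq:subsetnorm} over subsets:
\[
 \sum_{|S|\geq2}\prod_{p\in S}p^{-1}\leq e^{b_Y}-1-b_Y\ll b_Y^2.
\]
This is only $(\log Y)^{-2}$, not $\sigma^2$, so the triangle inequality alone is too weak. The main obstacle is therefore to exploit orthogonality between different $S$. Conditional on $\cF$, the terms $W_S$ for distinct $S$ are orthogonal in the block coordinates, since each $\delta_p$ is centered and the $V_p$ are independent. The $p$-shifts in the translates $V+k$ do not spoil this, because conditional independence holds coordinatewise.

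For $q=2$ we then get
\[
 \E[|\cR|^2\mid\cF]=\sum_{|S|\geq2}\E[|h_S|^2\mid\cF],
\]
with $\E\delta_p^2\leq p^{-2}$. This gives $\|\cR\|_2^2\ll\sum_{|S|\geq2}\prod_{p\in S}p^{-2}\ll\sigma^4$, provided the conditional second moments of $h_S$ factor. They do once we commute $\mathcal H_t$ with the conditional expectation and apply Lemma~\ref{lem:Lq} fibrewise. For general $q$ I would use a Marcinkiewicz--Zygmund or Burkholder inequality for the martingale obtained by ordering the primes of $\cP$. The $p$-th increment is $\delta_p$ times a function that is independent of $V_p$, and a Rosenthal-type estimate with uniformly bounded conditional moments gives $\|\cR\|_q\ll_q\sigma^2$. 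The same argument applied to $\Delta Z$, summing over all $S\neq\varnothing$, gives $\|\Delta Z\|_q\ll_q\sigma$. This square-function step is the delicate part. In particular the operator bounds must be applied after conditioning, using that $\mathcal H_t$ and $\mathcal D_t$ commute with the block coordinates.

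Finally, \eqref{eq:compareZ} is pointwise and elementary. Since $W=\overline W\prod_{p\in\cP}(1+\delta_p)$ with $|\delta_p|\leq p^{-1}\leq Y^{-1}$, we use $\log(1+x)\geq -2|x|$ for $|x|\leq 1/2$ and $\log(1+x)\leq x$. This gives
\[
 e^{-2b_Y}\leq\prod_{p\in\cP}(1+\delta_p)\leq e^{b_Y}.
\]
For $D_t$, each summand $\beta_2(t+k)W(V+k)/k^2$ is nonnegative and equals the corresponding summand of $\overline D$ times such a product evaluated at $V+k$. The ratio $D_t/\overline D$ is therefore a weighted average of numbers in $[e^{-2b_Y},e^{b_Y}]$, which proves \eqref{eq:compareZ}.
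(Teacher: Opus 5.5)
Your proofs of \eqref{eq:subsetnorm} and \eqref{eq:compareZ} are the paper's. For \eqref{eq:subsetnorm} you use the pointwise bound $|W_S|\le B\prod_{p\in S}p^{-1}$ followed by Lemma~\ref{lem:Lq}. For \eqref{eq:compareZ} you use $\prod_{p\in\cP}(1+\delta_p(V_p+k))\in[e^{-2b_Y},e^{b_Y}]$ together with positivity of the weights $\beta_2(t+k)/k^2$.

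For \eqref{eq:Rq} you take a different route, and the ``main obstacle'' you identify comes from applying the operators before summing over $S$. The sums over $S$ are finite and $\mathcal H_t,\mathcal D_t$ are linear. So with $P:=\prod_{p\in\cP}(1+\delta_p)$ and $S_1:=\sum_{p\in\cP}\delta_p$ one has exactly $\cR=\mathcal H_t\bigl(B(P-1-S_1)\bigr)$ and $\Delta Z=\bigl(\mathcal D_t(B(P-1)),B(P-1)\bigr)$. The paper applies Lemma~\ref{lem:Lq} once and reduces everything to two scalar facts about independent bounded centered variables:
\begin{itemize}
\item the pointwise bound $|P-1-S_1|\ll S_1^2+\sigma^2$, which uses $\sum_p|\delta_p|\le b_Y=o(1)$;
\item $\|S_1\|_q\ll_q\sigma$, from expanding $\E S_1^{2r}$, where singleton indices vanish and $\sum_p\E|\delta_p|^j\le\sigma^j$ for $j\ge2$.
\end{itemize}
No orthogonality between different $S$ is needed.

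Your orthogonality argument is still correct and complete for $q\le2$. The $h_S$ are mutually orthogonal, so $\|\cR\|_2^2=\sum_{|S|\ge2}\|h_S\|_2^2\ll\sigma^4$ directly from \eqref{eq:subsetnorm}. No factorization of conditional moments is required, and Lemma~\ref{lem:Lq} cannot be applied ``fibrewise'': the shifts $V\mapsto V+k$ move the outside coordinates too. What is true is that coordinate conditional expectations commute with the shifts, by translation invariance of Haar measure.

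For $q>2$ the Burkholder route works, but your sketch as written gives only $\sigma$, not $\sigma^2$. The $p$-th increment of $\sum_{|S|\ge2}W_S$ is $\delta_p\,B\,\bigl(\prod_{p'<p}(1+\delta_{p'})-1\bigr)$. A Burkholder or Rosenthal bound with merely bounded multipliers yields $(\sum_p p^{-2})^{1/2}=\sigma$. You must first prove the degree-one estimate $\|\prod_{p'<p}(1+\delta_{p'})-1\|_q\ll_q\sigma$, which is your $\Delta Z$ step. Feeding it back gives $\ll(\sum_p p^{-2}\sigma^2)^{1/2}=\sigma^2$, and then you apply Lemma~\ref{lem:Lq} once to the total.

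With that ordering your approach is a valid but heavier alternative. The paper's argument is shorter because it exploits pointwise smallness of $\sum_p|\delta_p|$. Yours uses only moment information on the block variables, so it would survive in settings without that pointwise bound.
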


\begin{proof}
Put $P:=\prod_{p\in\cP}(1+\delta_p)$ and $S_1:=\sum_{p\in\cP}\delta_p$.
Since $\sum|\delta_p|\leq b_Y=o(1)$ and $\sum\delta_p^2\leq\sigma^2$,
\begin{equation}\label{eq:productremainder}
 |P-1-S_1|\ll S_1^2+\sigma^2.
\end{equation}
In the expansion of $\E S_1^{2r}$, centering removes singleton
indices, while for every $j\geq2$,
\[
 \sum_{p\in\cP}\E|\delta_p|^j
 \leq Y^{-(j-2)}\sigma^2\leq\sigma^j.
\]
Hence $\|S_1\|_q\ll_q\sigma$ and
$\|P-1-S_1\|_q\ll_q\sigma^2$. Independence of $B$ and the block,
together with Lemmas~\ref{lem:Lq} and \ref{lem:moments}, gives
\[
 \|\mathcal H_t(B(P-1-S_1))\|_q\ll_q\sigma^2,
 \qquad \|\mathcal D_t(B(P-1))\|_q+\|B(P-1)\|_q\ll_q\sigma.
\]
The same operator bounds applied to $B\prod_{p\in S}\delta_p$
prove \eqref{eq:subsetnorm}. Finally
$e^{-2b_Y}\leq P(V+k)\leq e^{b_Y}$ and positivity of the
slope weights imply \eqref{eq:compareZ}.
\end{proof}

To prove contraction, we truncate $h_p$ to nearby poles. The following
Fourier estimate controls the omitted poles. For an integer $K\geq1$, let
\[
 L_K(\theta):=\sum_{0<|k|\leq K}\frac{e(k\theta)}k,
 \qquad
 T_K(\theta):=\PV\sum_{|k|>K}\frac{e(k\theta)}k.
\]
For every $p\geq2$, $K\geq1$, and real $\theta$, we have
\begin{equation}\label{eq:grid}
 \frac1p\sum_{a=0}^{p-1}|T_K(\theta+a/p)|^2
          \ll K^{-1}+p^{-1}.
\end{equation}
Indeed, the uniformly bounded sine harmonic sums and summation by parts give
\[
 |T_K(\theta)|\ll
 \min\left(1,\frac1{K\|\theta\|_{\R/\Z}}\right)
 \quad(\theta\notin\Z);
\]
at integers the tail is zero. On a shifted $p$-grid, at most a fixed
number of points lie at each successive distance $j/p$ from an integer.
Hence the left side of \eqref{eq:grid} is bounded by
\[
 \frac Cp\left(1+\sum_{j=1}^{p}
                  \min\left(1,\frac{p^2}{K^2j^2}\right)\right)
 \ll p^{-1}+K^{-1}.
\]

Take $K=\lfloor Y/4\rfloor$, so $2K<p$ throughout the block. Set
\begin{equation}\label{eq:truncated}
 w_k:=\frac{\beta_2(t+k)B_k}{k},\quad
 M_K:=\sum_{0<|k|\leq K}|w_k|,\quad
 X_p:=\sum_{0<|k|\leq K}w_k\delta_p(V_p+k),\quad r_p:=h_p-X_p.
\end{equation}
\begin{lemma}\label{lem:tail}
Uniformly in $t$ and $p\in\cP$,
\begin{equation}\label{eq:tail}
 \E_{\cF}\|r_p\|_{L^2(V_p)}^2\ll p^{-2}Y^{-1}.
\end{equation}
\end{lemma}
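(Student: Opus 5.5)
Conditional on $\cF$, the weights $w_k=\beta_2(t+k)B_k/k$ are frozen and only $V_p$ varies. By definition $h_p=\mathcal H_t(B\delta_p)$, so
\[
 r_p=\PV\sum_{|k|>K}w_k\,\delta_p(V_p+k).
\]
The plan is to expand every factor in characters and to reduce everything to the grid estimate \eqref{eq:grid}. The $L^q$ bounds of Lemma~\ref{lem:Lq} justify the principal value and all interchanges below. It is cleaner to estimate the full expectation $\E_{\cF}\E_{V_p}|r_p|^2$ directly.

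First expand $\delta_p$ in its $p$-adic Fourier series. Its frequencies are $a/p^\nu$ with $\nu\geq1$ and $(a,p)=1$, and by \eqref{eq:fourier} each coefficient has modulus $p^{-3\nu/2}$. Next expand $\beta_2(t+k)=\sum_\alpha c_2(\alpha)e_\alpha(t+k)$ and $B(V+k)=\sum_\gamma\widehat B(\gamma)e_\gamma(V+k)$. Here $\gamma$ runs over characters of $\prod_{p'\notin\cP,\,p'>2}\Z_{p'}$, so its denominators are coprime to $p$. Collecting the $k$-dependence gives, formally,
\[
 r_p=\sum_{\nu,a}\widehat\beta_p(a/p^\nu)e(aV_p/p^\nu)
 \sum_\alpha c_2(\alpha)e_\alpha(t)\sum_\gamma\widehat B(\gamma)e_\gamma(V)\,
 T_K\!\left(\tfrac a{p^\nu}+\alpha+\gamma\right).
\]
This is legitimate for character polynomials $B$ and extends by density and the operator bounds.

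Now apply the estimates in order.
\begin{enumerate}
\item Parseval in $V_p$ separates the frequencies $a/p^\nu$.
\item Parseval in $V$ (outside the block) separates the $\gamma$'s.
\item The absolutely summable $\alpha$-sum is handled by Minkowski followed by Cauchy--Schwarz, $(\sum|c_2|x_\alpha)^2\leq(\sum|c_2|)\sum|c_2|x_\alpha^2$, using $\sum_\alpha|c_2(\alpha)|=2+\sqrt2$ from \eqref{eq:dyadic}.
\end{enumerate}
This reduces the claim to bounding, uniformly in $\alpha$,
\[
 \sum_\gamma|\widehat B(\gamma)|^2\sum_{\nu\geq1}p^{-3\nu}
 \sum_{(a,p)=1}\bigl|T_K(a/p^\nu+\alpha+\gamma)\bigr|^2 .
\]
For $\nu=1$ the inner sum is over a shifted $p$-grid with shift $\theta=\alpha+\gamma$. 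By \eqref{eq:grid} it is at most $p(K^{-1}+p^{-1})$, so this term contributes $p^{-2}(K^{-1}+p^{-1})\ll p^{-2}Y^{-1}$, since $K\asymp Y\asymp p$. For $\nu\geq2$ we only use $|T_K|\ll1$, which gives at most $p^{-2\nu}\leq p^{-4}\ll p^{-2}Y^{-2}$. Finally $\sum_\gamma|\widehat B(\gamma)|^2=\E B^2\ll1$ uniformly by Lemma~\ref{lem:moments}, and every bound is uniform in $t$, which yields \eqref{eq:tail}.

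The main obstacle is the bookkeeping that makes the shift uniform. The grid estimate must be applied for each fixed $(\alpha,\gamma)$, with the shift $\alpha+\gamma$ arbitrary. This works only because $\gamma$ is built from primes outside $\cP$ and so does not interact with the $p$-grid. The exchange of the principal value with the character expansions also needs justification; I would handle it by first taking $B$ to be a character polynomial and then passing to the limit using Lemma~\ref{lem:Lq}.
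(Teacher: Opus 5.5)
Your proposal is correct and is essentially the paper's own argument. You fix a dyadic frequency of $\beta_2(t+k)$, apply Parseval over the characters $(\gamma,a/p^\nu)$ (distinct because $B$ has no $p$-coordinate), bound the denominator-$p$ terms by the grid estimate \eqref{eq:grid} and the higher $p$-powers by $|T_K|\ll1$, and sum the dyadic frequencies at the cost $2+\sqrt2$ from \eqref{eq:dyadic}. Your remark about first taking $B$ to be a character polynomial and passing to the limit via Lemma~\ref{lem:Lq} only makes explicit a justification the paper leaves implicit.
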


\begin{proof}
Fix first a dyadic frequency $\alpha$ of $\beta_2(t+k)$.
Write $\widehat B(\theta)$ for the Fourier coefficients of $B$
on the outside coordinates. Since $B$ has no $p$-coordinate,
the pairs $(\theta,a/p^\nu)$ are distinct characters. Parseval gives
\begin{align*}
 &\left\|\PV\sum_{|k|>K}
       \frac{e(k\alpha)B(V+k)\delta_p(V_p+k)}k\right\|_2^2\\
 &\quad=\sum_\theta|\widehat B(\theta)|^2
       \sum_{\nu\geq1}\sum_{a\bmod p^\nu}^{*}
                p^{-3\nu}|T_K(\alpha+\theta+a/p^\nu)|^2.
\end{align*}
The terms with denominator \(p\) contribute
$O(p^{-2}(K^{-1}+p^{-1}))$ by \eqref{eq:grid}; higher
\(p\)-power denominators contribute $O(p^{-4})$ since \(T_K\) is bounded.
With $K\asymp Y$ and $p\asymp Y$, their sum is
$O(p^{-2}Y^{-1})$. Finally $\|B\|_2\ll1$, and Minkowski sums
the dyadic frequencies at cost $2+\sqrt2$.
\end{proof}

The truncated term has a variance bounded below on an event determined
by the coordinates outside the block. Indeed, $2K<p$, so distinct
indices in the truncation are distinct modulo $p$.
The covariance formula \eqref{eq:cov} gives
\begin{equation}\label{eq:varX}
 \Var(X_p\mid\cF)
 =(\upsilon_p+p^{-3})\sum w_k^2-p^{-3}\left(\sum w_k\right)^2.
\end{equation}
Moreover $|X_p|\leq M_K/p$, and at the index in \eqref{eq:k0},
$|w_{k_0}|=(3/2)B_{k_0}$. Consequently, if
\begin{equation}\label{eq:weightgood}
 |w_{k_0}|\geq a,\quad M_K\leq M,\quad p\geq CM^2/a^2,
\end{equation}
then $\Var(X_p\mid\cF)\geq c_0a^2/p^2$. Also, for each fixed $q$,
\begin{equation}\label{eq:weightmoments}
 \E|w_{k_0}|^{-q}\ll_q1,\qquad
 \E M_K^q\ll_q(\log(2K))^q.
\end{equation}

The next estimate shows that this variance still gives contraction
after the truncation error is restored.

\begin{lemma}\label{lem:contraction}
Suppose centered real $X,r$ satisfy
\[
 |X|\leq M/p,\quad \Var X\geq c_0a^2/p^2,\quad
 \E r^2\leq\kappa a^2/p^2.
\]
If $TM/p\leq1/8$ and $\kappa$ is sufficiently small in terms of $c_0$,
then
\begin{equation}\label{eq:contraction}
 |\E e^{iT(X+r)}|\leq\exp(-c_1T^2a^2/p^2).
\end{equation}
Independence between $X$ and $r$ is not required.
\end{lemma}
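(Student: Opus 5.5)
We expand the exponential to second order in $r$ around $e^{iTX}$ and then estimate $|\E e^{iTX}|$ using the variance lower bound. Write
\[
 e^{iT(X+r)}=e^{iTX}+e^{iTX}\bigl(e^{iTr}-1\bigr).
\]
Since $X$ and $r$ are centered but possibly dependent, we cannot simply linearize $e^{iTr}-1\approx iTr$ and drop it. Instead we bound $|e^{iTr}-1-iTr|\leq T^2r^2/2$ and keep the linear term, which pairs with $e^{iTX}$ as $iT\E[re^{iTX}]=iT\E[r(e^{iTX}-1)]$ because $\E r=0$. By Cauchy--Schwarz and $|e^{iTX}-1|\leq T|X|$,
\[
 |\E e^{iT(X+r)}-\E e^{iTX}|\leq T^2\|r\|_2\|X\|_2+\tfrac12T^2\E r^2 .
\]
Here $\|X\|_2$ is controlled by $\E X^2=\Var X$. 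The error is therefore at most $T^2\sqrt{\kappa}\,(a/p)\,\sqrt{\Var X}+\tfrac12\kappa T^2a^2/p^2$.

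For the main term, the condition $|TX|\leq TM/p\leq1/8$ lets us use the Taylor expansion of $e^{iTX}$ with cubic remainder:
\[
 \E e^{iTX}=1-\tfrac12T^2\E X^2+O\bigl(T^3\E|X|^3\bigr),
\]
with $T^3\E|X|^3\leq T^3(M/p)\E X^2\leq\tfrac18T^2\E X^2$. Hence
\[
 |\E e^{iTX}|\leq1-\bigl(\tfrac12-\tfrac1{48}\bigr)T^2\Var X\leq1-\tfrac{11}{24}T^2\Var X .
\]
The constant $\tfrac1{48}$ comes from the cubic term $\tfrac16\cdot\tfrac18$, and all higher terms are absorbed since $|TX|\leq1/8$.

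Combining the two estimates, with $v:=\Var X\geq c_0a^2/p^2$, gives
\[
 |\E e^{iT(X+r)}|\leq1-T^2\Bigl(\tfrac{11}{24}v-\sqrt\kappa\,\tfrac ap\sqrt v-\tfrac\kappa2\tfrac{a^2}{p^2}\Bigr).
\]
Because $\tfrac ap\leq\sqrt{v/c_0}$, the bracket is at least $v\bigl(\tfrac{11}{24}-\sqrt{\kappa/c_0}-\kappa/(2c_0)\bigr)\geq v/4$ once $\kappa\leq c\,c_0$ for a small absolute $c$. Thus $|\E e^{iT(X+r)}|\leq1-\tfrac14T^2c_0a^2/p^2$. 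The right-hand side lies in $[0,1]$, since $T^2a^2/p^2\lesssim T^2v\leq T^2M^2/p^2\leq1/64$; here $\Var X\leq\E X^2\leq M^2/p^2$. Finally $1-x\leq e^{-x}$ yields \eqref{eq:contraction} with $c_1=c_0/4$.

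The main obstacle is the dependence between $X$ and $r$. It is handled by keeping the linear cross term and bounding it through Cauchy--Schwarz against $\|X\|_2$, rather than against the bound $M/p$. This makes the cross term scale like $\sqrt{\kappa}\,v$, so it is absorbed by the $v$ gain from the main term.
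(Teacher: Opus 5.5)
Your argument is correct, but it takes a different route from the paper.

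\emph{The paper's route.} It symmetrizes. With an independent copy $(X',r')$ it writes $2(1-|\E e^{iT(X+r)}|^2)=\|e^{iT(X+r)}-e^{iT(X'+r')}\|_2^2$. It bounds $\|e^{iTX}-e^{iTX'}\|_2^2=2\E[1-\cos(T(X-X'))]\geq cT^2\Var X$, using $|T(X-X')|\leq1/4$. It then absorbs $r$ by the reverse triangle inequality and the Lipschitz bound $|e^{iu}-e^{iv}|\leq|u-v|$, which gives $\sqrt{2(1-|\E e^{iT(X+r)}|^2)}\geq cT\sqrt{\Var X}-2T\|r\|_2$. Because it works on the scale $\sqrt{1-|\phi|^2}$, the perturbation $r$ enters only through $T\|r\|_2$ and is compared directly with $T\sqrt{\Var X}$. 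No cross term has to be found or cancelled, and $\E r=0$ is never used.

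\emph{Your route.} You work with $\phi=\E e^{iT(X+r)}$ itself. There the raw first-order effect $T\E[re^{iTX}]$ could be as large as $Ta/p$, which is far bigger than the available gain $T^2a^2/p^2$. You handle this correctly. You use $\E r=0$ to replace $e^{iTX}$ by $e^{iTX}-1$, and then gain a second factor $T\|X\|_2$ by Cauchy--Schwarz, so the cross term becomes $O(\sqrt{\kappa/c_0}\,T^2\Var X)$. This is legitimate because the lemma assumes $r$ centered. In both applications $r_p=h_p-X_p$ is conditionally centered anyway, and $|\E e^{iT(X+r)}|$ is unchanged if you subtract $\E r$.

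\emph{What each buys.} Your approach is a fully elementary Taylor computation with explicit constants: $c_1=c_0/4$, and $\kappa\leq c\,c_0$ for an absolute $c$. The paper's symmetrization is shorter and insensitive to shifts of $X$ and $r$, but its constants are implicit.

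One arithmetic slip: $\tfrac12-\tfrac1{48}=\tfrac{23}{48}$, not $\tfrac{11}{24}$. Since $\tfrac{11}{24}<\tfrac{23}{48}$, your displayed bound $|\E e^{iTX}|\leq1-\tfrac{11}{24}T^2\Var X$ is still true. That step also silently uses $1-\tfrac12T^2\E X^2\geq0$, which follows from your observation $T^2\E X^2\leq1/64$.
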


\begin{proof}
Let $(X',r')$ be an independent copy. Since
$|T(X-X')|\leq1/4$, symmetrization gives
\[
 \|e^{iTX}-e^{iTX'}\|_2^2
 =2\E[1-\cos(T(X-X'))]\geq cT^2\Var X.
\]
The reverse triangle inequality and $|e^{iu}-e^{iv}|\leq|u-v|$ imply
\begin{equation}\label{eq:symmetrize}
 \sqrt{2(1-|\E e^{iT(X+r)}|^2)}
 \geq cT\sqrt{\Var X}-2T\|r\|_2
 \geq c'Ta/p
\end{equation}
for sufficiently small $\kappa$. Squaring and using $1-u\leq e^{-u}$
proves \eqref{eq:contraction}.
\end{proof}

\subsection{Proof of the weighted smoothing estimate}\label{sec:retain}

We now combine the preceding estimates. Write
$h_t=\ell+\cR$, where
$\ell:=h_\varnothing+\sum_{p\in\cP}h_p$ contains the independent
one-prime terms and $\cR$ contains all interactions. On a suitable
outside event, most of the one-prime terms contract the characteristic
function. A term supported on $S\subseteq\cP$ still leaves all primes
outside $S$ available for this contraction. We therefore retain the
linear contribution of every interaction term and estimate only the
quadratic Taylor remainder in absolute value.

\begin{lemma}\label{lem:block-smoothing}
Let $\mathsf P$ be a probability law. Conditionally on a sigma-field
$\cF$, let $(V_p)_{p\in\cP}$ be independent, put $N:=|\cP|$, and write
\[
 \ell=\ell_\varnothing+\sum_{p\in\cP}\ell_p,
\]
where $\ell_\varnothing$ and the $\ell_p$ are real-valued,
$\ell_\varnothing$ is $\cF$-measurable, and $\ell_p$ is measurable
with respect to $\cF\vee\sigma(V_p)$. Suppose that on an event
$G\in\cF$ there is an $\cF$-measurable set
$\cP_{\rm good}\subseteq\cP$, of cardinality at least $N/2$, such that
\[
 \left|\E_{\mathsf P}(e^{iT\ell_p}\mid\cF)\right|\leq e^{-\gamma}
 \qquad(p\in\cP_{\rm good}).
\]
If $Q_S\in L^1(\mathsf P)$ is measurable with respect to
$\cF\vee\sigma(V_p:p\in S)$ for every $S\subseteq\cP$, then
\begin{equation}\label{eq:block-smoothing}
 \left|\E_{\mathsf P}\left[
   \ind_Ge^{iT\ell}\sum_{S\subseteq\cP}Q_S\right]\right|
 \leq e^{-\gamma N/4}\sum_{|S|\leq N/4}\E_{\mathsf P}|Q_S|
      +\sum_{|S|>N/4}\E_{\mathsf P}|Q_S|.
\end{equation}
In particular, if $\cP\subset(Y,2Y]$, $N\asymp Y/\log Y$,
$\gamma\geq cT^2a^2/Y^2$, and
\[
 \E_{\mathsf P}|Q_S|\leq C(1+|T|)
                   \prod_{p\in S}\frac{C_0}{p},
\]
then the left side of \eqref{eq:block-smoothing} is
\begin{equation}\label{eq:block-smoothing-corollary}
 \ll C(1+|T|)\left\{
 \exp\left(-c'\frac{T^2a^2}{Y\log Y}\right)+e^{-c'Y}\right\}.
\end{equation}
\end{lemma}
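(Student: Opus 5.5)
Since $G\in\cF$ and $\cP_{\rm good}$ are $\cF$-measurable, the plan is to condition on $\cF$, work one subset $S$ at a time, and then sum over $S$.

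Fix $S\subseteq\cP$. Write $e^{iT\ell}=e^{iT\ell_\varnothing}\prod_{p\in S}e^{iT\ell_p}\prod_{p\notin S}e^{iT\ell_p}$. The first factor is $\cF$-measurable. The second factor and $Q_S$ are measurable with respect to $\cF\vee\sigma(V_p:p\in S)$. Each factor $e^{iT\ell_p}$ with $p\notin S$ depends only on $\cF\vee\sigma(V_p)$. By conditional independence of the coordinates $(V_p)$ given $\cF$, the block $\prod_{p\notin S}e^{iT\ell_p}$ is conditionally independent of $\sigma(V_p:p\in S)$. Therefore
\[
 \E\bigl[\ind_G e^{iT\ell}Q_S\mid\cF\bigr]
 =\ind_G e^{iT\ell_\varnothing}\,
 \E\Bigl[Q_S\prod_{p\in S}e^{iT\ell_p}\Bigm|\cF\Bigr]
 \prod_{p\notin S}\E\bigl(e^{iT\ell_p}\mid\cF\bigr).
\]
In the last product, bound every factor by $1$ except those with $p\in\cP_{\rm good}\setminus S$, each of which is at most $e^{-\gamma}$ on $G$. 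When $|S|\le N/4$ we have $|\cP_{\rm good}\setminus S|\ge N/2-N/4=N/4$, so the product is at most $e^{-\gamma N/4}$. Taking absolute values gives $|\E[\cdots\mid\cF]|\le e^{-\gamma N/4}\E(|Q_S|\mid\cF)$. For $|S|>N/4$ we use the trivial bound $\E(|Q_S|\mid\cF)$. Taking expectations and summing over $S$ (finitely many terms, each in $L^1$) yields \eqref{eq:block-smoothing}. One technical point needs checking: the conditional factorization when $Q_S$ is only $L^1$. I would first prove it for bounded $Q_S$ using regular conditional distributions, which exist since the coordinates are product Haar, and then pass to $L^1$ by truncation and dominated convergence.

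For \eqref{eq:block-smoothing-corollary}, substitute the moment hypothesis. Since $p>Y$, $\sum_{|S|=j}\prod_{p\in S}C_0/p\le (C_0 N/Y)^j/j!\le (C'/\log Y)^j/j!$.
\begin{itemize}
\item The first sum is therefore at most $C(1+|T|)e^{-\gamma N/4}e^{C'/\log Y}\ll C(1+|T|)\exp(-c'T^2a^2/(Y\log Y))$, using $\gamma N\ge cT^2a^2N/Y^2\asymp T^2a^2/(Y\log Y)$.
\item The second sum is at most $C(1+|T|)\sum_{j>N/4}(C'/\log Y)^j/j!$. By Stirling with $j\ge N/4\asymp Y/\log Y$, this is $\le (eC'\cdot 4/(N\log Y))^{N/4}\le (C''/Y)^{cY/\log Y}=\exp(-cY(\log Y-O(1))/\log Y)\ll e^{-c'Y}$ for large $Y$.
\end{itemize}

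The main obstacle is making the conditional factorization rigorous for unbounded $Q_S$ with $\cF$-measurable random good sets. The combinatorial tail is routine, but the Stirling step must be checked to genuinely give $e^{-c'Y}$ rather than a weaker rate.
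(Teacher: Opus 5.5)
Your proof is correct and follows the paper's argument. You condition on $\cF$. For $|S|\le N/4$ you factor out the conditional characteristic functions of the at least $N/4$ good primes outside $S$, and for larger $S$ you use the trivial bound. You finish with $\sum_{|S|=j}\prod_{p\in S}C_0/p\le b^j/j!$, $b\ll 1/\log Y$, exactly as the paper does, and your Stirling estimate correctly yields $e^{-c'Y}$.

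The technical point you flag does not require regular conditional distributions. Your justification via Haar coordinates also would not cover a general $\mathsf P$, such as the tilted laws $Q$ used in the positivity argument. The factorization for bounded factors is just conditional independence of $\cF\vee\sigma(V_p:p\in S)$ and the $\cF\vee\sigma(V_p)$, $p\notin S$. It extends to $Q_S\in L^1$ by truncation and conditional dominated convergence.
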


\begin{proof}
For $|S|\leq N/4$, at least $N/4$ good coordinates lie outside $S$.
Condition first on $\cF$ and on the coordinates in $S$, and use the
remaining independent factors. This gives
\[
 \left|\E_{\mathsf P}[\ind_GQ_Se^{iT\ell}]\right|
 \leq e^{-\gamma N/4}\E_{\mathsf P}|Q_S|.
\]
For larger $S$, use the trivial bound and sum. Finally, with
$b:=\sum_{p\in\cP}C_0/p\ll1/\log Y$,
\[
 \sum_S\prod_{p\in S}\frac{C_0}{p}\leq e^b\ll1,
 \qquad
 \sum_{|S|>N/4}\prod_{p\in S}\frac{C_0}{p}
 \leq\sum_{j>N/4}\frac{b^j}{j!}\leq e^{-c'Y}.
\]
This proves both assertions.
\end{proof}

Set $T:=|\xi|$ large. We choose
\begin{equation}\label{eq:parameters}
 Y:=T^{7/4},\quad K:=\lfloor Y/4\rfloor,\quad
 a:=T^{-1/16},\quad M:=T^{1/16}\log(2K).
\end{equation}
The block scale $Y$ makes the interaction remainder small, while the
thresholds $a$ and $M$ retain enough variance in each one-prime term.
Let $G$ be the $\cF$-measurable event on which
$|w_{k_0}|\geq a$, $M_K\leq M$, and
\begin{equation}\label{eq:tailgood}
 \frac1N\sum_{p\in\cP}p^2\|r_p\|_{L^2(V_p)}^2
                  \leq\kappa a^2/2.
\end{equation}
By Lemma~\ref{lem:tail}, \eqref{eq:weightmoments}, and Markov,
\begin{equation}\label{eq:bad}
 \PP(G^c)\ll T^{-26/16}+a^{-2}Y^{-1}\ll T^{-13/8},
\end{equation}
using moments of order $26$.
On $G$, at least $N/2$ primes satisfy
$\|r_p\|_2^2\leq\kappa a^2/p^2$. The choice \eqref{eq:parameters} ensures
$p\geq CM^2/a^2$ and $TM/p\leq1/8$ for large $T$, so
Lemma~\ref{lem:contraction} applies to these primes.

\begin{proof}[Proof of Proposition~\ref{thm:smoothing}]
It is enough to consider $T:=|\xi|$ large, since bounded $T$ follows
from Lemma~\ref{lem:moments}. Use the block and the good event above,
and write $h_t=\ell+\cR$ as in \eqref{eq:block}.

For admissible $\psi$, Taylor expansion and \eqref{eq:compareZ} give
\begin{equation}\label{eq:retain}
 \psi(Z)e^{iT\cR}
 =\psi(\overline Z)+\nabla\psi(\overline Z)\cdot\Delta Z
       +iT\psi(\overline Z)\cR+\mathcal E,
\end{equation}
where, for some fixed $m$,
\begin{equation}\label{eq:Taylorerror}
 |\mathcal E|\leq C_\psi\omega(\overline Z)^m
 \bigl(|\Delta Z|^2+T|\Delta Z|\,|\cR|+T^2|\cR|^2\bigr).
\end{equation}
By H\"older, Lemma~\ref{lem:moments}, and \eqref{eq:Rq},
\begin{equation}\label{eq:erroraverage}
 \E|\mathcal E|\ll_\psi\sigma^2+T\sigma^3+T^2\sigma^4
                 \ll_\psi\sigma^2+T^2\sigma^4.
\end{equation}

The retained terms have the supported form required by
Lemma~\ref{lem:block-smoothing}. Namely, put
\[
 Q_\varnothing:=\psi(\overline Z)
\]
and, for $S\ne\varnothing$,
\[
 Q_S:=\nabla\psi(\overline Z)\cdot(D_S,W_S)
      +\ind_{\{|S|\geq2\}}iT\psi(\overline Z)h_S.
\]
By H\"older, Lemmas~\ref{lem:moments} and \ref{lem:blockmoments}, and
the polynomial growth of $\psi$ and its derivatives,
\[
 \E|Q_S|\ll_\psi(1+T)\prod_{p\in S}\frac{C_0}{p}
\]
for a fixed absolute $C_0$. The contraction estimate above verifies
the good-coordinate hypothesis of Lemma~\ref{lem:block-smoothing}.
Since $\|\psi(Z)\|_{13}\ll_\psi1$, that lemma, \eqref{eq:erroraverage},
and H\"older on $G^c$ give
\[
 |\E[\psi(Z)e^{iTh_t}]|
 \ll_\psi(1+T)
       \left(e^{-cT^2a^2/(Y\log Y)}+e^{-cY}\right)
       +\sigma^2+T^2\sigma^4+\PP(G^c)^{12/13}
 \ll_\psi T^{-3/2}.
\]
Indeed, the exponential terms are negligible,
$\sigma^2\ll T^{-7/4}$, $T^2\sigma^4\ll T^{-3/2}$, and
\eqref{eq:bad} gives $\PP(G^c)^{12/13}\ll T^{-3/2}$.
This proves \eqref{eq:weightedchar}; the argument is identical for
either sign of $\xi$.
\end{proof}

\section{Positivity of the density}\label{sec:positivity}

\begin{proposition}\label{thm:positive}
The continuous density of $h_0$ is strictly positive at every real point:
\begin{equation}\label{eq:positivity}
 f_1(0,y)>0\qquad(y\in\R).
\end{equation}
In particular $\nu_{0,0}\ne0$.
\end{proposition}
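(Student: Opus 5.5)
Fix $y\in\R$. The plan is to write $h_0$ as an independent sum plus a small remainder, as in Section~\ref{sec:prime}, and to use the fact that the density of a sum of independent pieces is a convolution. The block prime contributions together have a smooth density. The remaining part has positive probability of landing near $y$. Together these give positivity.

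\textbf{Step 1: use many blocks.} Replace a single block by a growing family of disjoint blocks of large odd primes, and let $\cF$ again be generated by all other odd coordinates (and $t=0$).

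Conditionally on $\cF$, the one-prime terms $h_p$ are independent and centered. By \eqref{eq:varX} they have variance $\gg a^2/p^2$ on the good event. Summing over all primes $p>Y$ gives conditional variance $\gg a^2/(Y\log Y)$, while each term is at most $M/p$.

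The first aim is to show the conditional variance is large enough that a Lindeberg-type local limit applies. The conditional law of $\sum_{p>Y}h_p$ should be close to Gaussian with variance $\sigma_Y^2$. The interaction remainder $\cR$ is of size $\sigma^2$ by Lemma~\ref{lem:blockmoments}.

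\textbf{Step 2: the conditional density.} By the proof of Proposition~\ref{thm:smoothing}, run conditionally on $\cF$ (the contraction and block-smoothing lemmas are already conditional), the conditional characteristic function of $h_0$ is integrable on $G$. It decays as $\exp(-cT^2a^2/(Y\log Y))$ for $|\xi|$ up to the scale $Y\log Y/M$, and as $T^{-3/2}$ beyond.

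Hence $h_0$ has a conditional density $g_\cF(y)$, and $f_1(0,y)=\E[g_\cF(y)]$. I would show that on $G$, the function $g_\cF$ is within $o(1/\sigma_Y)$ of a Gaussian density centered at $h_\varnothing$ with width $\sigma_Y$. The error is controlled by the retained-interaction expansion exactly as in \eqref{eq:retain}, now with $\psi\equiv1$.

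\textbf{Step 3: locate the center.} It then suffices to show
\[
 \PP\bigl(G,\ |h_\varnothing-y|\leq\sigma_Y\bigr)>0.
\]
This is the main obstacle.

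$h_\varnothing$ is the Hilbert-type transform of the product over primes outside the block. The problem is that its values could, in principle, avoid a neighbourhood of $y$.

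First try: exploit freedom in a few small primes. Changing $V_3$, say, shifts the weights $A_0(k)$ by bounded factors. Alternatively, choose the $2$-adic neighbourhood structure with $t=0$ fixed.

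My main route is instead to avoid this issue entirely by using the continuity of $f_1(0,\cdot)$ and a connectedness argument. The set $\{f_1(0,\cdot)>0\}$ is open and nonempty, since $f_1$ integrates to $1$. Suppose it had a boundary point $y_0$. Conditioning on $\cF$ with $Y$ huge, the law of $h_0$ is the average of the conditional laws. Each conditional law on $G$ is an approximate Gaussian of width $\sigma_Y>0$. A Gaussian convolution cannot vanish at $y_0$ while being positive nearby unless its total weight is tiny. Making this quantitative (positivity of $g_\cF$ on an interval of length $\gg\sigma_Y$ around each center, uniformly on $G$) propagates positivity across $y_0$, which is a contradiction.

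So the real technical step is a lower bound $g_\cF(x)\gg 1/\sigma_Y$ for $|x-h_\varnothing|\leq\sigma_Y$ on $G$. It comes from Fourier inversion: the Gaussian main term, minus errors bounded by the integrated characteristic-function estimates.

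Finally, $\nu_{0,0}\ne0$ because its total mass is $f_1(0,0)$, by Corollary~\ref{prop:density}.
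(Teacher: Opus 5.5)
Your Steps~1--2 are plausible, and your closing remark that $\nu_{0,0}$ has total mass $f_1(0,0)$ is correct. The proof breaks at Step~3, which you correctly identify as the crux but do not resolve, because the connectedness argument cannot close.

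Write $f=f_1(0,\cdot)$. A local limit theorem with additive error $o(1/\sigma_Y)$ gives $g_\cF\gg1/\sigma_Y$ only within $O(\sigma_Y)$ of the conditional center. Farther out the error swamps the Gaussian tail, so the conditional densities may vanish there. At a boundary point $y_0$ of $\{f>0\}$, all you can conclude is that, on $G$, the centers $h_\varnothing$ almost surely avoid $[y_0-\sigma_Y,y_0+\sigma_Y]$. That is entirely consistent with $f(y_0)=0$ and $f>0$ nearby. The mass just beside $y_0$ can come from $G^c$, or from the tails of conditional laws centered farther away. By continuity that mass is $o(\epsilon)$ in an $\epsilon$-window anyway, so your escape clause ``unless its total weight is tiny'' is exactly what occurs at a boundary point.

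In particular, nothing in your argument excludes that $h_0$ is bounded; the endpoints of its support would then be boundary points of $\{f>0\}$. The statement contains the genuinely arithmetic fact that $h_0$ is unbounded in both directions. Fourier smoothing alone cannot produce this.

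The missing idea is a controlled mechanism that displaces $h_0$ by a prescribed, arbitrarily large amount. The paper does this in four steps.
\begin{enumerate}
\item It tilts the law of the coordinates $V_p$, $P<p\leq P^A$, by the factors $1+\vartheta p\,\delta_p(V_p+k_*)$. By Mertens' theorem this multiplies the single pole contribution $a_*=\beta_2(k_*)W(V+k_*)/k_*$ by $A^\vartheta$ in the limit. The divisor structure of the tilted means shows that all other terms move only by $O(P^{-1})$ in $L^2$. Hence $\Law_{Q_{P,\vartheta}}(h)\Rightarrow\Law(h+(A^\vartheta-1)a_*)$ (Proposition~\ref{prop:displacement}).
\item The sign of $k_*$ is chosen according to which side of $y$ carries mass. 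Averaging $\vartheta$ against a smooth density then gives a limit law $h+\varepsilon Ua$ whose density is positive at $y$.
\item Uniform smoothing for all tilted measures (Proposition~\ref{prop:tiltsmoothing}) upgrades weak convergence to uniform convergence of densities.
\item For fixed $P$ the Radon--Nikodym derivative is at most $(3/2)^{|I_P|}$, so $f(y)\geq C_P^{-1}g_P(y)>0$.
\end{enumerate}
Your ``first try'' of varying $V_3$ points in this direction, but it only rescales all weights by bounded, correlated factors. What is needed is an unbounded boost concentrated on a single pole.
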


Write $h:=h_0$ and $f(y):=f_1(0,y)$. We combine uniform smoothing
under finite changes of measure with a limit that moves one pole
contribution, then average the displacement. Uniform smoothing
allows positivity to pass through the limit.

\subsection{A class of tilted product measures}
All prime products in this section are over odd primes.
Fix a nonzero integer $k_*$, a number $0<\vartheta_0<1$, and a finite
set $I$ of odd primes. For $0\leq\vartheta_p\leq\vartheta_0$ define
\begin{equation}\label{eq:tilt}
 \frac{dQ}{d\PP}(V):=
 \prod_{p\in I}\bigl(1+\vartheta_p p\delta_p(V_p+k_*)\bigr),
 \qquad \sum_{p\in I}\frac{\vartheta_p}{p}\leq L.
\end{equation}
These are probability measures with independent coordinates. Each
local density lies between $1-\vartheta_0$ and $1+\vartheta_0$.
For a fixed finite $I$, the full density is bounded above and below
by positive constants; these bounds need not be uniform in $I$.
Use $\vartheta_p=0$ off $I$ and write $Q_p$ for the local law.
We write $\|X\|_{q,Q}:=(\E_Q|X|^q)^{1/q}$.

Let $\mathcal A$ be the algebra of integer sequences with absolutely
summable rational Fourier coefficients, with norm the sum of their
absolute values. The covariance formula gives
\begin{align}
 m_p(k)&:=\E_{Q_p}\delta_p(V_p+k)
       =\vartheta_p p C_p(k-k_*),\label{eq:tiltmean}\\
 C_p(a)&:=\sum_{\nu\geq1}p^{-3\nu}c_{p^\nu}(a),&
 \|m_p\|_{\mathcal A}
       &=\vartheta_p p\upsilon_p=\frac{\vartheta_p}{p+1}.
       \label{eq:tiltWiener}
\end{align}
Set $\mu_p(k):=1+m_p(k)$ and
$\xi_p(V_p,k):=\delta_p(V_p+k)-m_p(k)$.
For any set of indices $E$,
\begin{equation}\label{eq:meanproduct}
 \left\|\prod_{p\in E}\mu_p\right\|_{\mathcal A}\leq e^L.
\end{equation}
Thus the deterministic mean factors have a uniform Fourier norm
even when the full Radon--Nikodym derivative is very large.

\begin{lemma}\label{lem:modulated}
For $F\in\mathcal A$, define
\[
 \mathcal T_F C:=\PV\sum_{k\ne0}\frac{\beta_2(k)F(k)C(V+k)}k,
 \qquad
 \mathcal T_{F,>K} C:=\PV\sum_{|k|>K}
                         \frac{\beta_2(k)F(k)C(V+k)}k.
\]
Under Haar measure,
\begin{align}
 \|\mathcal T_FC\|_2&\leq C\|F\|_{\mathcal A}\|C\|_2,
                    \label{eq:modulatedfull}\\
 \|\mathcal T_{F,>K}(C\delta_p)\|_2^2
 &\leq C\|F\|_{\mathcal A}^2\|C\|_2^2
                  p^{-2}(K^{-1}+p^{-1}),\label{eq:modulatedtail}
\end{align}
where the second assertion assumes that $C$ is independent of $V_p$.
\end{lemma}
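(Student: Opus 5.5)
The plan is to reduce both estimates to a single modulation $e(k\alpha)$ at a time. Then we can use the two tools already available: Lemma~\ref{lem:Lq}, which gives uniform $L^q$ boundedness of modulated principal-value Hilbert transforms along the shift $V\mapsto V+1$, and the Parseval computation in the proof of Lemma~\ref{lem:tail}.

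First we expand the multiplier. We have $\beta_2(k)=\sum_\alpha c_2(\alpha)e(k\alpha)$ with $\sum_\alpha|c_2(\alpha)|=2+\sqrt2$ by \eqref{eq:dyadic}. Also $F(k)=\sum_\theta\widehat F(\theta)e(k\theta)$ with $\sum_\theta|\widehat F(\theta)|=\|F\|_{\mathcal A}$, where $\theta$ ranges over $\Q/\Z$. Hence
\[
 \beta_2(k)F(k)=\sum_{\gamma}g(\gamma)e(k\gamma),\qquad
 \sum_\gamma|g(\gamma)|\leq(2+\sqrt2)\|F\|_{\mathcal A},
\]
and this series converges absolutely and uniformly in $k$. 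By Minkowski's inequality, both assertions reduce to proving, uniformly in $\gamma\in\Q/\Z$,
\[
 \Bigl\|\PV\sum_{k\neq0}\frac{e(k\gamma)C(V+k)}k\Bigr\|_2\leq C\|C\|_2,
 \qquad
 \Bigl\|\PV\sum_{|k|>K}\frac{e(k\gamma)C(V+k)\delta_p(V_p+k)}k\Bigr\|_2^2\leq C\|C\|_2^2p^{-2}(K^{-1}+p^{-1}).
\]
Squaring after Minkowski then produces the factor $\|F\|_{\mathcal A}^2$ in \eqref{eq:modulatedtail}.

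One technical point has to be settled before the interchange is legitimate. The principal value must commute with the $\gamma$-sum, so we define each modulated operator as the $L^2$ limit of its symmetric partial sums. The partial-sum multipliers $\sum_{0<|k|\leq L}e(k(\gamma+\phi))/k$ are uniformly bounded in $L$ and in the frequency. The partial sums therefore converge in $L^2$ with norms uniformly bounded, and dominated convergence over the absolutely summable $\gamma$-sum justifies exchanging the limit and the sum.

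For the first bound, expand $C$ in characters of $\Omega_o$, $C=\sum_\phi\widehat C(\phi)e_\phi(V)$, where $e_\phi(V+k)=e(k\phi)e_\phi(V)$. The operator is then diagonal, with multiplier
\[
 \PV\sum_{k\ne0}\frac{e(k(\gamma+\phi))}{k}.
\]
This is the sawtooth $i\pi(1-2\{\gamma+\phi\})$ (or zero at integers), which is bounded by $\pi$. Parseval then gives \eqref{eq:modulatedfull}; equivalently, one can quote Lemma~\ref{lem:Lq} with $q=2$ directly.

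For the tail bound we repeat Lemma~\ref{lem:tail}. Since $C$ is independent of $V_p$, the products $\widehat C(\phi)e_\phi\cdot e(aV_p/p^\nu)$ with $(a,p)=1$ and $\nu\geq1$ are distinct characters. This uses the Fourier expansion of $\delta_p$, which has no constant term and coefficients of modulus $p^{-3\nu/2}$ by \eqref{eq:fourier}. Parseval gives
\[
 \sum_\phi|\widehat C(\phi)|^2\sum_{\nu\geq1}\sum_{a\bmod p^\nu}^{*}p^{-3\nu}\bigl|T_K(\gamma+\phi+a/p^\nu)\bigr|^2.
\]
The $\nu=1$ layer is handled by the grid estimate \eqref{eq:grid}, which gives $p^{-2}(K^{-1}+p^{-1})$ uniformly in the shift $\gamma+\phi$. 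For $\nu\geq2$ we use $\|T_K\|_\infty\ll1$ together with $\varphi(p^\nu)p^{-3\nu}\leq p^{-2\nu}$, which sums to $O(p^{-4})$. Combining the layers and using $\sum_\phi|\widehat C(\phi)|^2=\|C\|_2^2$ gives the claim.

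The only delicate point is the principal-value interchange described above. The rest is bookkeeping of Fourier coefficients, so I expect no genuine obstacle.
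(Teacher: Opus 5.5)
Your proposal is correct and follows essentially the same route as the paper. Both expand the multiplier in its absolutely summable Fourier series, handle each single frequency by the modulated Hilbert-transform bound (you compute the sawtooth multiplier directly instead of citing Lemma~\ref{lem:Lq}, which is equivalent) and by the shifted Parseval/grid computation of Lemma~\ref{lem:tail}, and then sum the frequencies by Minkowski; your justification of the principal-value interchange via uniformly bounded partial-sum multipliers makes explicit a step the paper leaves implicit.
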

\begin{proof}
Expand $F$ in its absolutely summable Fourier series. Each frequency
modulates the operator in Lemma~\ref{lem:Lq} and shifts the argument
of $T_K$ in the Parseval calculation of Lemma~\ref{lem:tail}.
The grid bound \eqref{eq:grid} is uniform in this shift; the higher conductors
cost $O(p^{-4})$. Minkowski sums the frequencies at cost
$\|F\|_{\mathcal A}$ in the norm.
\end{proof}

\begin{lemma}\label{lem:tiltestimates}
Let $J:=\{p:Y<p\leq2Y\}$, and let $\cF$ be generated by the coordinates
outside $J$. Put $B(V+k):=\prod_{p\notin J}\beta_p(V_p+k)$.
The exact conditional centered expansion is
\begin{equation}\label{eq:tiltchaos}
 h=\sum_{S\subseteq J}h_S^Q,\qquad
 h_S^Q:=\PV\sum_{k\ne0}\frac{\beta_2(k)B(V+k)}k
       \prod_{p\in J\setminus S}\mu_p(k)
       \prod_{p\in S}\xi_p(V_p,k).
\end{equation}
There is an absolute constant $C_0\geq1$ such that, with constants
depending only on $L,\vartheta_0$,
\begin{align}
 \|h_S^Q\|_{2,Q}&\leq C_L\prod_{p\in S}\frac{C_0}{p},
       &\left\|\sum_{|S|\geq2}h_S^Q\right\|_{2,Q}
          &\ll_L\sum_{p\in J}p^{-2}.\label{eq:tiltchaosbounds}
\end{align}
For every fixed $q>0$, $\E_Q B(V+k)^{\pm q}\ll_{q,L}1$ uniformly
in $k,J,I$. If $X_p$ truncates $h_{\{p\}}^Q$ to $0<|k|\leq K$,
then
\begin{equation}\label{eq:tilttail}
 \E_Q|h_{\{p\}}^Q-X_p|^2
       \ll_L p^{-2}(K^{-1}+p^{-1}).
\end{equation}
The first estimate in \eqref{eq:tiltchaosbounds} also holds for any
finite $J$, with the same constants.
\end{lemma}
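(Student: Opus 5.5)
The plan is to prove Lemma~\ref{lem:tiltestimates} by transporting the Haar-measure arguments of Lemmas~\ref{lem:blockmoments} and \ref{lem:tail} to $Q$. The key observation is that, under $Q$, the coordinates remain independent and each block coordinate $V_p$ has a bounded local density.

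\emph{Exact expansion.} For each $k$, write $\beta_p(V_p+k)=1+\delta_p(V_p+k)=\mu_p(k)+\xi_p(V_p,k)$, where $\E_{Q_p}\xi_p(\cdot,k)=0$ by \eqref{eq:tiltmean}. Expanding $\prod_{p\in J}(\mu_p(k)+\xi_p(V_p,k))$ over subsets $S\subseteq J$ and inserting the result into the principal value defining $h=h_0$ gives \eqref{eq:tiltchaos}. The interchange of the finite product expansion with the principal value is justified termwise by the $L^2$ bounds proved next. Conditionally on $\cF$, each $h_S^Q$ depends only on $(V_p)_{p\in S}$, and it is $Q$-centered in each such coordinate.

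\emph{Moment bounds.} For $\E_QB(V+k)^{\pm q}$, independence reduces the problem to the local factors. For $p\notin I$ the local factor equals its Haar value $1+O_q(p^{-2})$. For $p\in I$ it is $\E[\beta_p^{\pm q}(1+\vartheta_pp\delta_p(\cdot+k_*))]=1+O_q(p^{-2})+O(\vartheta_p p\,|\E[\delta_p(\cdot+k)\delta_p(\cdot+k_*)]|)$. The covariance is $O(p^{-2})$, or $-p^{-3}$ by \eqref{eq:cov} when $p\nmid k-k_*$, so the factor is $1+O(\vartheta_p/p)+O(p^{-2})$. Hence the product is bounded by $e^{O(L)}\prod_p(1+Cp^{-2})$, uniformly in $k,J,I$.

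For $\|h_S^Q\|_{2,Q}$, the $\mathcal A$-valued coefficient $F(k):=\prod_{p\in J\setminus S}\mu_p(k)$ has $\|F\|_{\mathcal A}\le e^L$ by \eqref{eq:meanproduct}. The remaining issue is that Lemma~\ref{lem:modulated} is stated under Haar measure, while $Q$ is not uniformly comparable to Haar. I will handle this by conditioning: the randomness of $h_S^Q$ lives in the $S$-coordinates and in $B$. On the $S$-coordinates the $Q$-density is at most $\prod_{p\in S}(1+\vartheta_0)$, which can be absorbed into $C_0$.

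The outside coordinates of $B$ are the delicate part. Rather than change measure, I will expand $\E_Q|h_S^Q|^2$ via Parseval in the variable $V$ restricted to coordinates outside $J$, applied to the tilted product. Tilting an outside coordinate $p\in I$ replaces $\beta_p(V_p+k)$ by a function whose $Q_p$-mean and covariance structure are perturbed by $\mathcal A$-norm $O(\vartheta_p/p)$. These perturbations multiply to at most $e^{O(L)}$, exactly as in \eqref{eq:meanproduct}. The operator bound of Lemma~\ref{lem:modulated} then applies with $F$ replaced by a product of $\mu$-type factors, together with the Haar-type $L^2$ norm of the centered parts. The product over $S$ contributes $\prod_{p\in S}\|\xi_p\|_{2,Q_p}\ll\prod_{p\in S}C_0p^{-1}$, since $|\xi_p|\le 2/p$.

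Summing over $|S|\ge2$ uses orthogonality of distinct $S$ under $Q$, which holds because the $\xi$'s are centered and independent. This gives
\[
\Bigl\|\sum_{|S|\ge2}h_S^Q\Bigr\|_{2,Q}^2\ll_L\sum_{|S|\ge2}\prod_{p\in S}C_0^2p^{-2}\ll\Bigl(\sum_{p\in J}p^{-2}\Bigr)^2.
\]
Taking square roots yields the second bound in \eqref{eq:tiltchaosbounds}. Nothing in these steps used $J$ being a dyadic block, which gives the remark about arbitrary finite $J$.

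\emph{Truncation tail.} The tail estimate \eqref{eq:tilttail} follows the same route, using \eqref{eq:modulatedtail} with $F=\prod_{p'\ne p}\mu_{p'}$ and $C=B$. The $V_p$-density under $Q$ costs only the factor $1+\vartheta_0$.

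\emph{Main obstacle.} The hard step is making the $L^2(Q)$ operator bounds for $B$ uniform in $I$, since $dQ/d\PP$ is not uniformly bounded. If the Parseval approach is messy, I will try a fallback: fold the tilt into $B$ by writing $\E_Q[\cdot]=\E[\cdot\,\prod_{p\in I}(1+\vartheta_pp\delta_p(V_p+k_*))]$. I would then treat $B\prod_{p\in I}(1+\vartheta_pp\delta_p(V_p+k_*))^{1/2}$ as a new stationary-in-shift product, whose local factors remain bounded above and below. The goal would be to show that its shift-covariances differ from Haar covariances by summable $\mathcal A$-norm $O(\sum\vartheta_p/p)\le L$.
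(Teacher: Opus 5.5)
Several of your steps are correct and match the paper: the moments $\E_QB(V+k)^{\pm q}$ via independence and Taylor expansion with $|m_p(k)|\le\vartheta_p/(p+1)$, the finite expansion over $S\subseteq J$, the bound $e^L$ on the mean products, and the $Q$-orthogonality of distinct $S$.

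\emph{The gap.} It is the step you flag yourself: bounding $\|h_S^Q\|_{2,Q}$ and the truncation tail uniformly in $I$. The input $B$ still contains the tilted coordinates $p\in I\setminus J$, and their joint density is unbounded as $|I|$ grows. Your sentence about $\mu$-type factors and ``Haar-type norms of the centered parts'' points the right way, but you never supply the mechanism that makes it uniform. The version you describe does not fit Lemma~\ref{lem:modulated}. The $Q_p$-covariance $\E_{Q_p}[\xi_p(V_p,k)\xi_p(V_p,l)]$ depends on $k-k_*$ and $l-k_*$ separately, so it is not a multiplier $F(k)$. Also, characters in $V_p$ are not orthogonal under $Q_p$, so Parseval does not apply to the tilted coordinates.

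Your fallback fails. The density $dQ/d\PP$ is evaluated at $V$, not at $V+k$, so multiplying its square root into $B(V+k)$ destroys the shift structure on which the transference bound rests. The tail step has the same defect: \eqref{eq:modulatedtail} with $C=B$ is a Haar bound. Paying $1+\vartheta_0$ for $V_p$ alone does not transfer it to $Q$, because $B$ is itself tilted. Finally, the $\xi_p$ cannot be pulled out of the $k$-sum to give a factor $\prod_{p\in S}\|\xi_p\|_{2,Q_p}$.

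\emph{The missing idea.} Expand the tilted outside primes as well. Write $\beta_p(V_p+k)=\mu_p(k)+\xi_p(V_p,k)$ also for $p\in I\setminus J$, so that $h_S^Q=\sum_{T\subseteq I\setminus J}h_{S,T}^Q$ with
\[
 h_{S,T}^Q=\Bigl(\prod_{p\in E}\Pi_p\Bigr)\mathcal T_{F_{S,T}}\Bigl(B_0\prod_{p\in E}\delta_p\Bigr),\qquad E=S\cup T.
\]
Here $\Pi_p:=1-\E_{Q_p}$, $F_{S,T}:=\prod_{p\in(I\cup J)\setminus E}\mu_p$, and $B_0:=\prod_{p\notin I\cup J}\beta_p$ involves only untilted coordinates. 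The bound then goes as follows.
\begin{enumerate}
\item The projections are contractions on $L^2(Q)$.
\item What remains depends only on Haar coordinates and on the coordinates in $E$. Comparison with Haar therefore costs only $(1+\vartheta_0)^{|E|}$.
\item Lemma~\ref{lem:modulated} applies with $\|F_{S,T}\|_{\mathcal A}\leq e^L$. Independence gives $\|B_0\prod_{p\in E}\delta_p\|_2^2=\|B_0\|_2^2\prod_{p\in E}\upsilon_p$. Together these give $\|h_{S,T}^Q\|_{2,Q}^2\ll e^{2L}\prod_{p\in E}(1+\vartheta_0)\upsilon_p$.
\item Distinct $E$ are $Q$-orthogonal. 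Summing over $T$ costs only $\prod_{p\in I\setminus J}(1+(1+\vartheta_0)\upsilon_p)$, which is bounded uniformly in $I$ because $\sum_p\upsilon_p<\infty$.
\end{enumerate}
This also handles the $S$-factor correctly: $\prod_{p\in S}\xi_p$ is the projection of $\prod_{p\in S}\delta_p$, not something to be factored out.

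For \eqref{eq:tilttail}, run the same expansion with $S=\{p\}$. Apply \eqref{eq:modulatedtail} to $C=B_0\prod_{q\in T}\delta_q$, which is independent of $V_p$. Then compare with Haar on $T\cup\{p\}$, use contraction of the projections, and sum orthogonally over $T$.
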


\begin{proof}
Expand the outside tilted primes as well. For $S\subseteq J$ and
$T\subseteq I\setminus J$, put
\[
 E:=S\cup T,\qquad B_0:=\prod_{p\notin I\cup J}\beta_p,
 \qquad F_{S,T}:=\prod_{p\in(I\cup J)\setminus E}\mu_p.
\]
With the commuting orthogonal projections $\Pi_p:=1-\E_{Q_p}$,
the term having centered support $E$ is
\begin{equation}\label{eq:projectionidentity}
 h_{S,T}^Q:=\left(\prod_{p\in E}\Pi_p\right)
       \mathcal T_{F_{S,T}}\left(B_0\prod_{p\in E}\delta_p\right).
\end{equation}
This follows first for finite sums and then in $L^2$.
The input depends only on $E$ and Haar coordinates, so comparison
with Haar costs only $(1+\vartheta_0)^{|E|}$.
Since $\|F_{S,T}\|_{\mathcal A}\leq e^L$, projection and
Lemma~\ref{lem:modulated} give
\begin{equation}\label{eq:orthotilt}
 \|h_{S,T}^Q\|_{2,Q}^2
 \leq Ce^{2L}\|B_0\|_2^2
                \prod_{p\in E}(1+\vartheta_0)\upsilon_p.
\end{equation}
Different centered supports are orthogonal. Summing over $T$ costs
the bounded product $\prod_{p\in I\setminus J}(1+(1+\vartheta_0)\upsilon_p)$,
and hence
\[
 \|h_S^Q\|_{2,Q}^2\leq C_L\prod_{p\in S}C_0^2p^{-2},\qquad
 \left\|\sum_{|S|\geq2}h_S^Q\right\|_{2,Q}^2
 \leq C_L\sum_{|S|\geq2}\prod_{p\in S}C_0^2p^{-2}
 \ll_L\left(\sum_{p\in J}p^{-2}\right)^2.
\]
This also justifies the expansions and applies to any finite $J$.

For the moments, Taylor expansion gives
\[
 \E_{Q_p}\beta_p(V_p+k)^{\pm q}
 =1\pm q m_p(k)+O_q(p^{-2}),\qquad \sum_p|m_p(k)|\leq L.
\]
Multiplication over independent coordinates proves the claimed bounds.
For the tail use \eqref{eq:projectionidentity} with $S=\{p\}$
and replace $\mathcal T_F$ by $\mathcal T_{F,>K}$.
Before projection the input $C=B_0\prod_{q\in T}\delta_q$
is independent of $V_p$, so \eqref{eq:modulatedtail} gives
\[
 \|\mathcal T_{F_{\{p\},T},>K}(C\delta_p)\|_{2,\PP}^2
 \ll e^{2L}\|B_0\|_2^2\prod_{q\in T}\upsilon_q\,
                      p^{-2}(K^{-1}+p^{-1}).
\]
Comparison on $T\cup\{p\}$, contraction of the projections, and
orthogonal summation over $T$ prove \eqref{eq:tilttail}.
\end{proof}

\begin{proposition}\label{prop:tiltsmoothing}
All the measures \eqref{eq:tilt} satisfy
\begin{equation}\label{eq:tiltchar}
 |\E_Q e^{i\xi h}|\ll_{L,\vartheta_0}(1+|\xi|)^{-3/2}.
\end{equation}
In particular their $h$-densities are continuous and have a common
integrable Fourier majorant. The bound does not involve $|I|$ or
the supremum of $dQ/d\PP$.
\end{proposition}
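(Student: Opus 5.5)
We rerun the block argument of Section~\ref{sec:retain}, now under $Q$ and with $\psi\equiv1$ and $t=0$. The one real difference from the Haar case is that the block primes are no longer centered. For that reason we work throughout with the tilted chaos expansion \eqref{eq:tiltchaos}, and never with the Haar expansion \eqref{eq:block}.

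\textbf{Setup.} Put $T:=|\xi|$, assume it is large, and choose $Y,K,a,M$ as in \eqref{eq:parameters}. Take the block $J=\{p:Y<p\le2Y\}$. Since $Q$ is a product measure, the block coordinates remain independent conditionally on $\cF$, and $Q_p$ is the law of $V_p$. Decompose
\[
 h=\ell+\cR^Q,\qquad \ell:=h^Q_\varnothing+\sum_{p\in J}h^Q_{\{p\}},\qquad \cR^Q:=\sum_{|S|\ge2}h_S^Q.
\]
Each $h^Q_{\{p\}}$ is $\cF\vee\sigma(V_p)$-measurable and $Q$-centered. By \eqref{eq:tiltchaosbounds},
\[
 \|\cR^Q\|_{2,Q}\ll_L\sigma^2 .
\]
Expand
\[
 e^{iT\cR^Q}=1+iT\cR^Q+O(T^2|\cR^Q|^2).
\]
This gives $\E_Q e^{iTh}$ equal to
\[
 \E_Q\Bigl[e^{iT\ell}\Bigl(1+iT\sum_{|S|\ge2}h^Q_S\Bigr)\Bigr]+O(T^2\sigma^4).
\]
The error is $\ll T^{-3/2}$, exactly as in the Haar case. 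The retained terms $Q_S:=iTh^Q_S$ are supported on $S$ and satisfy $\E_Q|Q_S|\le(1+T)C_L\prod_{p\in S}C_0/p$. So Lemma~\ref{lem:block-smoothing}, which is stated for an arbitrary law $\mathsf P$, applies once we have a good event $G$ and contraction for each good prime.

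\textbf{Contraction.} Let $X_p$ be the truncation of $h^Q_{\{p\}}$ to $0<|k|\le K$. Its weights are
\[
 w_k=\frac{\beta_2(k)B(V+k)}{k}\prod_{q\in J\setminus\{p\}}\mu_q(k),
\]
and it pairs these with $\xi_p(V_p,k)$. Three facts carry over.
\begin{enumerate}
\item $|X_p|\le 2M_K/p$, since $|\xi_p|\le2/p$.
\item The tail satisfies $\E_Q|h^Q_{\{p\}}-X_p|^2\ll_L p^{-2}Y^{-1}$ by \eqref{eq:tilttail}.
\item The conditional variance is bounded below. The local density $dQ_p/d\PP$ lies in $[1-\vartheta_0,1+\vartheta_0]$, so for any function $g$ of $V_p$ we have $\Var_{Q_p}(g)\ge(1-\vartheta_0)\inf_c\E_{\PP}|g-c|^2$. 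This reduces the bound to the Haar variance \eqref{eq:varX}, losing only the factor $1-\vartheta_0$. With the same choice of $k_0$ from \eqref{eq:k0} we get $\Var\ge c_0(1-\vartheta_0)a^2/p^2$ whenever $|w_{k_0}|\ge a$ and $M_K\le M$.
\end{enumerate}
The factors $\mu_q(k)=1+m_q(k)$ satisfy $\sum_q|m_q(k)|\le L$. Hence they are bounded above and below by $e^{\pm CL}$, provided we use $|m_q|\le\vartheta_q/(p+1)\cdot$(const) pointwise; the $\mathcal A$-norm bound gives this. The moments $\E_Q|w_{k_0}|^{-q}$ and $\E_QM_K^q$ are therefore controlled by the uniform moment bounds $\E_QB^{\pm q}\ll_{q,L}1$ in Lemma~\ref{lem:tiltestimates}. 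Markov's inequality then gives $Q(G^c)\ll T^{-13/8}$, and Lemma~\ref{lem:contraction} yields contraction for at least $N/2$ primes on $G$. Collecting terms,
\[
 |\E_Qe^{iTh}|\ll_{L,\vartheta_0}T^{-3/2}.
\]
On $G^c$ we use the trivial bound $|e^{iT\ell}(1+iT\cR^Q)|$ together with Hölder. This needs $\|\cR^Q\|_{q,Q}$ for some $q>2$, or else a bound by $\|\cR^Q\|_{2,Q}Q(G^c)^{1/2}$; the latter already gives $T\sigma^2T^{-13/16}\ll T^{-3/2}$.

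\textbf{Main obstacle.} The delicate step is uniformity in $I$. The density $dQ/d\PP$ is not uniformly bounded, so we cannot transfer Haar moments by changing measure. Every bound must instead come from the product structure: coordinatewise comparison with constant $(1+\vartheta_0)^{|E|}$ on the actual support $E$, and the Wiener-norm control \eqref{eq:meanproduct} on the mean factors. This is precisely what Lemma~\ref{lem:tiltestimates} supplies. The one remaining point to check is that the variance lower bound and the event $G$ use only quantities of this type, namely $B$, the $\mu_q$ and $\cF$-measurable weights. Continuity of the densities and the common integrable majorant then follow by Fourier inversion, as in Corollary~\ref{prop:density}.
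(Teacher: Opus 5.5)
Your proposal is correct and follows essentially the same route as the paper: rerun the block argument under $Q$ using the tilted chaos expansion \eqref{eq:tiltchaos}, take $Q_\varnothing=1$ and $Q_S=iTh_S^Q$ for $|S|\geq2$ in Lemma~\ref{lem:block-smoothing}, feed the comparison $\Var_{Q_p}\geq(1-\vartheta_0)\Var_{\PP_p}$ into \eqref{eq:varX}, and use the uniform moment and tail bounds of Lemma~\ref{lem:tiltestimates} to get $Q(G^c)\ll T^{-13/8}$. The differences are cosmetic. The paper inserts $\ind_G$ before Taylor expanding, so $G^c$ costs only $Q(G^c)$ rather than your extra (harmless) $T\sigma^2Q(G^c)^{1/2}$. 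It also bounds the block mean products by $e^{-2b_Y}$ and $e^{b_Y}$ rather than your cruder $e^{\pm CL}$.
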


\begin{proof}
Put $T:=|\xi|$ and assume that $T$ is large; bounded $T$ is trivial.
Use the block $J=(Y,2Y]\cap\{\text{odd primes}\}$ and parameters
\eqref{eq:parameters}. Write $h=\ell_Q+\cR_Q$, with
$\ell_Q:=h_\varnothing^Q+\sum_{p\in J}h_{\{p\}}^Q$.
The one-prime truncations are
\[
 X_p:=\sum_{0<|k|\leq K}w_{p,k}\xi_p(V_p,k),\qquad
 w_{p,k}:=\frac{\beta_2(k)B(V+k)}k
                     \prod_{q\in J\setminus\{p\}}\mu_q(k).
\]
The mean products lie between $e^{-2b_Y}$ and $e^{b_Y}$.
On the outside event
\[
 \tfrac32B(V+1)\geq2a,\qquad
 \sum_{0<|k|\leq K}\frac{\beta_2(k)B(V+k)}{|k|}\leq M/4,
\]
we therefore have $|w_{p,1}|\geq a$ and $|X_p|\leq M/p$.
Since $dQ_p/d\PP_p\geq1-\vartheta_0$,
\[
 \Var_{Q_p}X\geq(1-\vartheta_0)\Var_{\PP_p}X.
\]
Apply this to $\sum w_{p,k}\delta_p(V_p+k)$ and use
\eqref{eq:varX}: $\Var_{Q_p}X_p\geq c_{\vartheta_0}a^2/p^2$.
Lemma~\ref{lem:tiltestimates} supplies the uniform moment and tail
bounds. The argument of \eqref{eq:bad} gives an outside event $G$ with
\begin{equation}\label{eq:tiltbad}
 Q(G^c)\ll_{L,\vartheta_0}T^{-13/8},
\end{equation}
on which at least half the block primes satisfy
Lemma~\ref{lem:contraction}.

Apply Lemma~\ref{lem:block-smoothing} under the law $\mathsf P=Q$,
with $\ell=\ell_Q$. The expansion
\[
 e^{iT\cR_Q}=1+iT\cR_Q+\mathcal E_Q,
 \qquad |\mathcal E_Q|\leq\tfrac12T^2|\cR_Q|^2,
\]
has the required supported form: take $Q_\varnothing=1$,
$Q_S=0$ for $|S|=1$, and $Q_S=iTh_S^Q$ for $|S|\geq2$.
By \eqref{eq:tiltchaosbounds}, uniformly in $I$,
\[
 \E_Q|Q_S|\ll_{L,\vartheta_0}(1+T)
       \prod_{p\in S}\frac{C_0}{p},
 \qquad
 \E_Q|\mathcal E_Q|\ll_{L,\vartheta_0}T^2\sigma^4.
\]
The contraction above verifies the good-coordinate hypothesis.
Lemma~\ref{lem:block-smoothing} and \eqref{eq:tiltbad} therefore give
\[
 |\E_Qe^{iTh}|
 \ll_{L,\vartheta_0}(1+T)
 \left(e^{-cT^2a^2/(Y\log Y)}+e^{-cY}\right)
       +T^2\sigma^4+Q(G^c)
 \ll_{L,\vartheta_0}T^{-3/2}.
\]
This proves \eqref{eq:tiltchar}; the estimate is uniform in $I$.
\end{proof}

\subsection{Changing one pole contribution in the limit}
Fix $A>1$ and $0\leq \vartheta\leq\vartheta_0$. Let $Q_{P,\vartheta}$ use
$I_P:=\{p:P<p\leq P^A\}$ and $\vartheta_p:=\vartheta$ in \eqref{eq:tilt}.
Mertens' prime sum formula gives
$\sum_{p\in I_P}p^{-1}\to\log A$.
Define the signed pole contribution
\begin{equation}\label{eq:astar}
 a_*(V):=\frac{\beta_2(k_*)}{k_*}W(V+k_*).
\end{equation}

\begin{proposition}\label{prop:displacement}
As $P\to\infty$,
\begin{equation}\label{eq:displacement}
 \Law_{Q_{P,\vartheta}}(h)\ \Longrightarrow\
 \Law_{\PP}\bigl(h+(A^\vartheta-1)a_*\bigr).
\end{equation}
Convergence of characteristic functions is uniform for
$\vartheta\in[0,\vartheta_0]$ at each fixed frequency.
\end{proposition}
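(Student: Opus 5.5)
The plan is to compare characteristic functions using the exact conditional expansion \eqref{eq:tiltchaos} with $J:=I_P$. By \eqref{eq:tilt}, the coordinates outside $I_P$ keep their Haar law under $Q_{P,\vartheta}$. The sum $\sum_{p\in I_P}\vartheta/p\to\vartheta\log A$ is bounded, so Lemma~\ref{lem:tiltestimates} applies with a fixed $L$. Its first bound in \eqref{eq:tiltchaosbounds}, together with orthogonality of distinct centered supports, gives
\[
 \Bigl\|\sum_{S\neq\varnothing}h_S^Q\Bigr\|_{2,Q}^2\ll_L\sum_{p>P}p^{-2}\ll P^{-1}.
\]
Hence $h=h_\varnothing^Q+o(1)$ in $L^2(Q_{P,\vartheta})$, uniformly in $\vartheta\le\vartheta_0$. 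The leading term is
\[
 h_\varnothing^Q=\mathcal T_{F_P}B,\qquad B(V+k)=\prod_{p\notin I_P}\beta_p(V_p+k),\qquad F_P(k):=\prod_{p\in I_P}\mu_p(k).
\]
It is a function of the untilted coordinates only, so its $Q_{P,\vartheta}$-law equals its Haar law. This reduces the problem to a statement under $\PP$.

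Next I would analyse the mean factor $F_P$ using \eqref{eq:tiltmean}.
\begin{itemize}[leftmargin=2em]
\item At $k=k_*$: $m_p(k_*)=\vartheta p\upsilon_p=\vartheta/(p+1)$. Mertens then gives $c_P:=F_P(k_*)\to A^\vartheta$, uniformly in $\vartheta$.
\item At $k\neq k_*$ with no prime of $I_P$ dividing $k-k_*$: $m_p(k)=-\vartheta p^{-2}$, so $F_P(k)-1=O(\sum_{p>P}p^{-2})=O(P^{-1})$.
\item The remaining indices $k$ have $|k-k_*|>P$.
\end{itemize}
Accordingly I would write $F_P=1+(c_P-1)\ind_{\{k_*\}}+G_P$. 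The goal is $\|\mathcal T_{G_P}B\|_2\to0$. Granting that, we get
\[
 h_\varnothing^Q=\mathcal T_1B+(c_P-1)\frac{\beta_2(k_*)B(V+k_*)}{k_*}+o_{L^2}(1).
\]

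Removing the primes of $I_P$ from the product changes it little: by \eqref{eq:productmoments}, independence, and $\E\delta_p^2=\upsilon_p$, we have $\|B(V+k)-W(V+k)\|_2\ll(\sum_{p>P}p^{-2})^{1/2}$, uniformly in $k$. Lemma~\ref{lem:Lq} then gives $\mathcal T_1B\to h$ in $L^2(\PP)$, and the pole term tends to $(A^\vartheta-1)a_*$. Convergence in $L^2$ implies convergence of characteristic functions. At a fixed frequency $\xi$ the error is $\ll|\xi|$ times $L^2$ errors that are uniform in $\vartheta$, which yields the uniformity claim.

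The main obstacle is the estimate $\|\mathcal T_{G_P}B\|_2\to0$, because $G_P$ need not be small in the norm of $\mathcal A$. I would expand $F_P-1=\sum_{\varnothing\ne E\subseteq I_P}\prod_{p\in E}m_p$ and separate two kinds of contribution.
\begin{itemize}[leftmargin=2em]
\item \emph{Constant parts.} Each $m_p$ equals $\vartheta pC_p(\cdot-k_*)$. Its part coming from the constant value $-\vartheta p^{-2}$ off $p\mid k-k_*$ has $\mathcal A$-norm $O(p^{-2})$, so all such products contribute $O(P^{-1})$ by \eqref{eq:modulatedfull}.
\item \emph{Resonant parts.} What remains is supported on $p\mid k-k_*$ and has $\mathcal A$-norm $\ll\vartheta/p$. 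After deleting the single point $k=k_*$, it is supported on $|k-k_*|>P$.
\end{itemize}
For the resonant parts, I would first try the tail operator $\mathcal T_{F,>K}$ with $K\asymp P$. I would estimate it in $L^2$ by the Parseval and grid argument of Lemma~\ref{lem:tail} and \eqref{eq:modulatedtail}, applied to the $p$-frequencies $a/p^\nu$ carried by $C_p$. This should give a bound $\ll(\vartheta/p)\,(K^{-1}+p^{-1})^{1/2}$ per prime. The bound is summable over $p\in I_P$ to $o(1)$, with products over larger sets $E$ controlled by \eqref{eq:meanproduct}.
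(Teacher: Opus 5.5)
Your argument is correct, and its outer frame matches the paper's. The centered expansion \eqref{eq:tiltchaos} with $J=I_P$ reduces $h$ to the conditional mean $\mathcal T_{F_P}B$, which is the paper's $\overline h_{P,\vartheta}$ with $F_P=M_{P,\vartheta}$. This mean has Haar law, and at the end $B$ is replaced by $W$.

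The real difference is in how you show that the multiplier acts as $1$ away from $k_*$.

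\emph{The paper's route.} It uses \eqref{eq:covdivisor} to write $M_{P,\vartheta}=\kappa_{P,\vartheta}\sum_d a_\vartheta(d)\ind_{\{d\mid k-k_*\}}$ with $a_\vartheta\geq0$. Evaluating at $k=k_*$ identifies the nonnegative total mass with $M_{P,\vartheta}(k_*)$. The progression estimate \eqref{eq:progressionHilbert} (a Hilbert transform along $V\mapsto V+d$, scaled by $d^{-1}$) then handles every subset of primes and every prime power at once. The error is $O(\sum_{d>1}a_\vartheta(d)/d)=O(P^{-1})$.

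\emph{Your route.} You expand over subsets $E$ and split each $m_p$ into its off-resonance constant and its part on $p\mid k-k_*$. You then estimate one resonant factor by a tail/grid bound and pay the $\mathcal A$-norms of the other factors. This avoids the positive multiplicative divisor structure. The cost is more bookkeeping for mixed products and an error of only $O(P^{-1/2})$, which still suffices and is uniform in $\vartheta$.

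One step needs adjusting. Here the $p$-frequencies sit in the deterministic multiplier, not in a factor $\delta_p(V_p+k)$. So Parseval in $V$ produces a coherent sum $\sum_a\widehat{m}_p(a/p^\nu)\,T_K(\beta+a/p^\nu)$, not the orthogonal sum of Lemma~\ref{lem:tail}. Cauchy--Schwarz over the grid recovers your bound $(\vartheta/p)(K^{-1}+p^{-1})^{1/2}$, because all level-$\nu$ coefficients of $m_p$ have the same modulus $\vartheta p^{1-3\nu}$. Alternatively, the scalar sum $\sum_{k\equiv k_*\ (p),\,k\neq k_*}e(k\beta)/k$ is directly $O(p^{-1})$, uniformly in $\beta$. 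That is the scalar form of the paper's progression estimate, and it gives the sharper bound.
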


\begin{proof}
Put $B_P(V):=\prod_{p\notin I_P}\beta_p(V_p)$ and
$\sigma_P^2:=\sum_{p\in I_P}p^{-2}=o(1)$.
The outside coordinates have Haar law. The conditional mean is
\[
 \overline h_{P,\vartheta}:=\PV\sum_{k\ne0}
       \frac{\beta_2(k)B_P(V+k)}k M_{P,\vartheta}(k),\qquad
 M_{P,\vartheta}(k):=\prod_{p\in I_P}(1+\vartheta p C_p(k-k_*)).
\]
The centered-support estimate and orthogonality in
Lemma~\ref{lem:tiltestimates}, with $J=I_P$, give
\begin{equation}\label{eq:tiltconcentration}
 \|h-\overline h_{P,\vartheta}\|_{2,Q_{P,\vartheta}}\ll_{A,\vartheta_0}\sigma_P.
\end{equation}
The spatial concentration of the mean follows from the Ramanujan-sum identity
\begin{equation}\label{eq:covdivisor}
 p C_p(a)=-p^{-2}+(1-p^{-3})
                 \sum_{\nu\geq1}p^{1-2\nu}\ind_{\{p^\nu\mid a\}}.
\end{equation}
Consequently
\begin{equation}\label{eq:meandivisor}
 M_{P,\vartheta}(k)=\kappa_{P,\vartheta}\sum_d a_\vartheta(d)\ind_{\{d\mid k-k_*\}},
 \qquad \kappa_{P,\vartheta}:=\prod_{p\in I_P}(1-\vartheta/p^2),
\end{equation}
where $a_\vartheta$ is multiplicative on $I_P$-smooth integers and
\[
 a_\vartheta(1):=1,\qquad
 a_\vartheta(p^\nu):=\frac{\vartheta(1-p^{-3})p^{1-2\nu}}{1-\vartheta/p^2}\geq0.
\]
Its total mass is uniformly bounded, because
\begin{equation}\label{eq:centralmultiplier}
 \kappa_{P,\vartheta}\sum_d a_\vartheta(d)=M_{P,\vartheta}(k_*)
       =\prod_{p\in I_P}(1+\vartheta/(p+1))\longrightarrow A^\vartheta
\end{equation}
uniformly in $\vartheta$. Also $\kappa_{P,\vartheta}\to1$ uniformly.

If $d>2|k_*|$, the restricted Hilbert sum satisfies
\begin{equation}\label{eq:progressionHilbert}
 \left\|\PV\sum_{\substack{k\ne0\\k\equiv k_*\ (d)}}
       \frac{\beta_2(k)B_P(V+k)}k
       -\frac{\beta_2(k_*)B_P(V+k_*)}{k_*}\right\|_2
       \ll d^{-1}.
\end{equation}

To see this, remove $k=k_*$ and write $k=k_*+dj$. The resulting
operator is $d^{-1}$ times a modulated Hilbert transform along
$V\mapsto V+d$, with kernel $(j+k_*/d)^{-1}$. Its difference from
$1/j$ has $\ell^1$ norm $O(|k_*|/d)$. The Fourier norm of
$\beta_2(k_*+dj)$ is at most $2+\sqrt2$, proving the bound.
The same argument is uniform in symmetric cutoffs $|k|\leq L$
with $L\geq|k_*|$, so $\sum_d a_\vartheta(d)\ll_A1$ permits
interchange of the divisor series and principal value in $L^2$.
Since every nontrivial divisor exceeds $P$, the total error is
$O(\sum_{d>1}a_\vartheta(d)/d)=O_A(P^{-1})$. Hence
\begin{equation}\label{eq:meanlimit}
 \overline h_{P,\vartheta}
 =\kappa_{P,\vartheta}\mathcal H_0 B_P+
   (M_{P,\vartheta}(k_*)-\kappa_{P,\vartheta})
      \frac{\beta_2(k_*)B_P(V+k_*)}{k_*}
   +O_{L^2}(P^{-1}).
\end{equation}
All constants may depend on fixed $A,k_*,\vartheta_0$ and are uniform in $\vartheta$.
Under Haar, $W=B_PP_{I_P}$, where
$P_{I_P}:=\prod_{p\in I_P}\beta_p(V_p)$, and the two factors are
independent. Hence
\[
 \|B_P-W\|_2^2
 =\E B_P^2\,\E(P_{I_P}-1)^2
 =\E B_P^2\left(\prod_{p\in I_P}(1+\upsilon_p)-1\right)
 \ll\sigma_P^2.
\]
The Hilbert bound and \eqref{eq:centralmultiplier} now give
\[
 \sup_{\vartheta\in[0,\vartheta_0]}
 \|\overline h_{P,\vartheta}-(h+(A^\vartheta-1)a_*)\|_{2,\PP}
 \longrightarrow0.
\]
The conditional mean depends only on outside coordinates, which
have Haar law under both measures. Combining this limit with
\eqref{eq:tiltconcentration} proves convergence of characteristic
functions, uniformly in $\vartheta$ at each fixed frequency.
\end{proof}

\subsection{Averaging displacements forces positivity}
\begin{proof}[Proof of Proposition~\ref{thm:positive}]
Fix $y\in\R$ and choose $\varepsilon\in\{1,-1\}$ with
$\PP(\varepsilon(y-h)>0)>0$, possible since $h$ has a density.
Use $k_*:=1$ for $\varepsilon=1$ and $k_*:=-2$ otherwise.
Then $a_*=\varepsilon a$, where $a:=|a_*|>0$ almost surely and
$\E a^{-1}<\infty$ by Lemma~\ref{lem:moments}.
Choose $M<\infty$ with $\PP(0<\varepsilon(y-h)/a<M)>0$ and
a smooth probability density $r$, supported on $[0,M]$ and
positive on $(0,M)$. For independent $U$ with density $r$,
$h+\varepsilon Ua$ has continuous density
\begin{equation}\label{eq:mixdensity}
 g(x):=\E\left[a^{-1}r\left(\varepsilon(x-h)/a\right)\right],
 \qquad g(y)>0.
\end{equation}
Continuity follows from $\E a^{-1}<\infty$.

Take $\vartheta_0:=1/2$, $A:=(1+M)^2$, and
$\vartheta(u):=\log(1+u)/\log A$. Let
$\overline Q_P:=\int_0^M r(u)Q_{P,\vartheta(u)}\,du$.
Its $h$-law converges weakly to $g(x)\,dx$ by
Proposition~\ref{prop:displacement}. Their budgets are uniformly
bounded, so Proposition~\ref{prop:tiltsmoothing} supplies a common
integrable Fourier majorant. Fourier inversion gives
\begin{equation}\label{eq:uniformdensitylimit}
 \|g_P-g\|_\infty\leq(2\pi)^{-1}
       \int_\R|\widehat g_P-\widehat g|\longrightarrow0.
\end{equation}
Fix a finite $P$ with $g_P(y)>0$. Set
$C_P:=(3/2)^{|I_P|}$. Since $d\overline Q_P/d\PP\leq C_P<\infty$,
we have $g_P\leq C_Pf$ almost everywhere, hence everywhere by
continuity. Therefore
\begin{equation}\label{eq:positivityconclusion}
 f(y)\geq C_P^{-1}g_P(y)>0.
\end{equation}
No uniform bound on $C_P$ is needed.
\end{proof}

\subsection*{Declaration on the use of artificial intelligence}
This paper was developed with assistance from OpenAI Codex in ChatGPT Work,
using models in the GPT-5 family, accessed during September--October 2026. The idea behind the proof was supplied by the author and
these models were used to identify and explore the problem, suggest
intermediate lemmas and proof strategies, perform mathematical checks, and
assist with drafting and revising the text.  The author independently
reconstructed and verified every argument, checked the cited sources, and made
all final mathematical and editorial decisions.  The author takes full
responsibility for the contents of the paper and for any errors.

\bigskip
\noindent Mathematics Department, University of Houston, Houston, TX, 77004, USA\\
E-mail: \texttt{clutsko@uh.edu}

\end{document}